
\documentclass[reqno]{amsart}
\usepackage{amsfonts}
\usepackage{amsmath}
\usepackage[left=3.10cm,right=3.10cm]{geometry}

\setcounter{MaxMatrixCols}{10}

\newtheorem{theorem}{Theorem}
\theoremstyle{plain}

\newtheorem{corollary}{Corollary}

\newtheorem{definition}{Definition}

\newtheorem{lemma}{Lemma}

\newtheorem{proposition}{Proposition}
\newtheorem{remark}{Remark}

\DeclareMathOperator{\Div}{div}
 \numberwithin{equation}{section}
\input{tcilatex}

\begin{document}
\title[Almost periodic homogenization of \ SPDEs]{Homogenization of a
stochastic nonlinear reaction-diffusion equation with a large reaction term:
the almost periodic framework}
\author{Paul Andr\'{e} Razafimandimby}
\address{Department of Mathematics and Applied Mathematics, University of
Pretoria, Pretoria 0002, South Africa (P.A. Razafimandimby)}
\email{paulrazafi@gmail.com}
\author{Mamadou Sango}
\address{Department of Mathematics and Applied Mathematics, University of
Pretoria, Pretoria 0002, South Africa (M. Sango)}
\email{mamadou.sango@up.ac.za}
\author{Jean Louis Woukeng}
\address{Department of Mathematics and Computer Science, University of
Dschang, P.O. Box 67, Dschang, Cameroon (J.L. Woukeng)}
\email{jwoukeng@yahoo.fr}
\date{July, 2011}
\subjclass[2000]{ 35B40, 35K57, 46J10, 60H15}
\keywords{Stochastic Homogenization, Almost Periodic, Stochastic
Reaction-Diffusion Equations, Wiener Process}

\begin{abstract}
Homogenization of a stochastic nonlinear reaction-diffusion equation with a
large nonlinear term is considered. Under a general Besicovitch almost
periodicity assumption on the coefficients of the equation we prove that the
sequence of solutions of the said problem converges in probability towards
the solution of a rather different type of equation, namely, the stochastic
nonlinear convection-diffusion equation which we explicitly derive in terms
of appropriated functionals. We study some particular cases such as the
periodic framework, and many others. This is achieved under a suitable
generalized concept of $\Sigma $-convergence for stochastic processes.
\end{abstract}

\maketitle

\section{Introduction}

The homogenization theory is an important branch of the asymptotic analysis.
Since the pioneering work of Bensoussan et al. \cite{BLP} it has grown very
significantly, giving rise to several sub-branches such as the \textit{%
deterministic} homogenization theory and the \textit{random} homogenization
theory. Each of these sub-branches has been developed and deepened.
Regarding the deterministic homogenization theory, from the classical
periodic theory \cite{BLP} to the recent general deterministic ergodic
theory \cite{Casado, Hom1, CMP, NA}, many results have been reported and
continue to be published. We refer to some of these results \cite{Allaire,
Casado, Hom1, CMP, NA} relating to the deterministic homogenization of
deterministic partial differential equations in the periodic framework and
in the deterministic ergodic framework in general.

The random homogenization theory is divided into two major subgroups: the
homogenization of differential operators with random coefficients, and the
homogenization of stochastic partial differential equations. As far as the
first subgroup is concerned, so many results are also available so far; we
refer e.g. to \cite{Bensoussan3, Bourgeat1, Bourgeat2, Bourgeat3, Jikov,
Kozlov1, Kozlov2, PardouxPiat, Telega, VARADHAN1, WoukAA}.

In contrast with either the deterministic homogenization theory or the
homogenization of partial differential operators with random coefficients,
very few results are available in the setting of the homogenization of
stochastic partial differential equations (SPDEs). We cite for example \cite%
{Bensoussan1, Ichihara1, Ichihara2, Wang1, Wang2} in which are considered
the homogenization problems related to SPDEs with periodic coefficients
(only!). See also \cite{Sango} in which homogenization of a SPDE with
constant coefficients is considered. It should be noted that unfortunately
so far, no result in this area is available beyond the periodic setting.

Given the interest of SPDEs in modeling of physical phenomena, which are
besides not only simple random periodically perturbed phenomena, it is
important to think of a theory generalizing that of the homogenization of
SPDEs with periodic coefficients. This is one of the objectives of this work.

More precisely, we discuss the homogenization problem for the following
nonlinear SPDE 
\begin{equation}
\left\{ 
\begin{array}{l}
du_{\varepsilon }=\left( \Div\left( a\left( \frac{x}{\varepsilon },\frac{t}{%
\varepsilon ^{2}}\right) Du_{\varepsilon }\right) +\frac{1}{\varepsilon }%
g\left( \frac{x}{\varepsilon },\frac{t}{\varepsilon ^{2}},u_{\varepsilon
}\right) \right) dt+M\left( \frac{x}{\varepsilon },\frac{t}{\varepsilon ^{2}}%
,u_{\varepsilon }\right) dW\text{\ \ in }Q_{T} \\ 
u_{\varepsilon }=0\text{\ \ on }\partial Q\times (0,T) \\ 
u_{\varepsilon }(x,0)=u^{0}(x)\text{\ \ in }Q%
\end{array}%
\right.  \label{1.1}
\end{equation}%
in the almost periodic environment, where $Q_{T}=Q\times (0,T)$, $Q$ being a
Lipschitz domain in $\mathbb{R}^{N}$ with smooth boundary $\partial Q$, $T$
is a positive real number and $W$ is a $m$-dimensional standard Wiener
process defined on a given probability space $(\Omega ,\mathcal{F},\mathbb{P}%
)$. The choice of the above problem lies in its application in engineering
(see for example \cite{AllPiat1, AllPiat2, Mik} in the deterministic
setting, and \cite{pardoux} in the stochastic framework, for more details).
In fact, as in \cite{AllPiat1}, the unknown $u_{\varepsilon }$ may be viewed
as the concentration of some chemical species diffusing in a porous medium
of constant porosity, with diffusivity $a(y,\tau )$ and reacting with
background medium through the nonlinear term $g(y,\tau ,u)$ under the
influence of a random external source $M(y,\tau ,u)$. The motivation of this
choice is several fold. Firstly, we start from a SPDE of reaction-diffusion
type, and we end up, after the passage to the limit, with a SPDE of a
convection-diffusion type; this is because of the large reaction's term $%
\frac{1}{\varepsilon }g(x/\varepsilon ,t/\varepsilon ^{2},u_{\varepsilon })$
which satisfies some kind of centering condition; see Section 4 for details.
Secondly, the order of the microscopic time scale here is twice that of the
microscopic spatial scale. This leads after the passage to the limit, to a
rather complicated so-called cell problem, which is besides, a deterministic
parabolic type equation, the random variable behaving in the latter equation
just like a parameter. Such a problem is difficult to deal with as, in our
situation, it involves a microscopic time derivative derived from the
semigroup theory, which is not easy to handle. Thirdly, in order to solve
the homogenization problem under consideration, we introduce a suitable type
of convergence which takes into account both deterministic and random
behavior of the data of the original problem. This method is formally
justified by the theory of Wiener chaos polynomials \cite{Cameron, Wiener}.
In fact, following \cite{Cameron} (see also \cite{Wiener}), any sequence of
stochastic processes $u^{\varepsilon }(x,t,\omega )\in L^{2}(Q\times
(0,T)\times \Omega )$ expresses as follows: 
\begin{equation*}
u^{\varepsilon }(x,t,\omega )=\sum_{j=1}^{\infty }u_{j}^{\varepsilon
}(x,t)\Phi _{j}(\omega )
\end{equation*}%
where the functions $\Phi _{j}$ are the generalized Hermite polynomials,
known as the Wiener-chaos polynomials. The above decomposition clearly
motivates the definition of the concept of convergence used in this work;
see Section 3 for further details. Finally, the periodicity assumption on
the coefficients is here replaced by the almost periodicity assumption.
Accordingly, it is the first time that an SPDE is homogenized beyond the
classical period framework, and our result is thus, new. It is also
important to note that in the deterministic, i.e. when $M=0$ in (\ref{1.1}),
the equivalent problem obtained has just been solved by Allaire and
Piatnitski \cite{AllPiat1} under the periodicity assumption on the
coefficients, but with a weight function on the derivative with respect to
time. Our result may therefore generalize to the almost periodic setting,
the one obtained by Allaire and Piatnitski in \cite{AllPiat1}.

The layout of the paper is as follows. In Section 2 we recall some useful
fact about almost periodicity that will be used in the next sections.
Section 3 deals with the concept of $\Sigma $-convergence for stochastic
processes. In Section 4, we state the problem to be studied. We proved there
a tightness result that will be used in the next section. We state and prove
homogenization results in Section 5. In particular we give in that section
the explicit form of the homogenization equation. Finally, in Section 6, we
give some applications of the result obtained in the previous section.

Unless otherwise specified, vector spaces throughout are assumed to be
complex vector spaces, and scalar functions are assumed to take complex
values. We shall always assume that the numerical space $\mathbb{R}^{m}$
(integer $m\geq 1$) and its open sets are each equipped with the Lebesgue
measure $dx=dx_{1}...dx_{m}$.

\section{Spaces of almost periodic functions}

The concept of almost periodic functions is well known in the literature. We
present in this section some basic facts about it, which will be used
throughout the paper. For a general presentation and an efficient treatment
of this concept, we refer to \cite{Bohr}, \cite{Besicovitch} and \cite%
{Levitan}.

Let $\mathcal{B}(\mathbb{R}^{N})$ denote the Banach algebra of bounded
continuous complex-valued functions on $\mathbb{R}^{N}$ endowed with the $%
\sup $ norm topology.

A function $u\in \mathcal{B}(\mathbb{R}^{N})$ is called a almost periodic
function if the set of all its translates $\{u(\cdot +a)\}_{a\in \mathbb{R}%
^{N}}$ is precompact in $\mathcal{B}(\mathbb{R}^{N})$. The set of all such
functions forms a closed subalgebra of $\mathcal{B}(\mathbb{R}^{N})$, which
we denote by $AP(\mathbb{R}^{N})$. From the above definition, it is an easy
matter to see that every element of $AP(\mathbb{R}^{N})$ is uniformly
continuous. It is classically known that the algebra $AP(\mathbb{R}^{N})$
enjoys the following properties:

\begin{itemize}
\item[(i)] $\overline{u}\in AP(\mathbb{R}^{N})$ whenever $u\in AP(\mathbb{R}%
^{N})$, where $\overline{u}$ stands for the complex conjugate of $u$;

\item[(ii)] $u(\cdot +a)\in AP(\mathbb{R}^{N})$ for any $u\in AP(\mathbb{R}%
^{N})$ and each $a\in \mathbb{R}^{N}$;

\item[(iii)] For each $u\in AP(\mathbb{R}^{N})$ the closed convex hull of $%
\{u(\cdot +a)\}_{a\in \mathbb{R}^{N}}$ in $\mathcal{B}(\mathbb{R}^{N})$
contains a unique complex constant $M(u)$ called the mean value of $u$, and
which satisfies the property that the sequence $(u^{\varepsilon
})_{\varepsilon >0}$ (where $u^{\varepsilon }(x)=u(x/\varepsilon )$, $x\in 
\mathbb{R}^{N}$) weakly $\ast $-converges in $L^{\infty }(\mathbb{R}^{N})$
to $M(u)$ as $\varepsilon \rightarrow 0$. $M(u)$ also satisfies the property 
\begin{equation*}
M(u)=\lim_{R\rightarrow +\infty }\frac{1}{(2R)^{N}}\int_{[-R,R]^{N}}u(y)dy.
\end{equation*}
\end{itemize}

As a result of (i)-(iii) above we get that $AP(\mathbb{R}^{N})$ is an
algebra with mean value on $\mathbb{R}^{N}$ \cite{Jikov}. The spectrum of $%
AP(\mathbb{R}^{N})$ (viewed as $\mathcal{C}^{\ast }$-algebra) is the Bohr
compactification of $\mathbb{R}^{N}$, denoted usually in the literature by $b%
\mathbb{R}^{N}$, and, in order to simplify the notation, we denote it here
by $\mathcal{K}$. Then, as it is classically known, $\mathcal{K}$ is a
compact topological\ Abelian group. We denote its Haar measure by $\beta $
(as in \cite{Hom1}). The following result is due to the Gelfand
representation theory of $\mathcal{C}^{\ast }$-algebras.

\begin{theorem}
\label{t2.1}There exists an isometric $\ast $-isomorphism $\mathcal{G}$ of $%
AP(\mathbb{R}^{N})$ onto $\mathcal{C}(\mathcal{K})$ such that every element
of $AP(\mathbb{R}^{N})$ is viewed as a restriction to $\mathbb{R}^{N}$ of a
unique element in $\mathcal{C}(\mathcal{K})$. Moreover the mean value $M$
defined on $AP(\mathbb{R}^{N})$ has an integral representation in terms of
the Haar measure $\beta $ as follows: 
\begin{equation*}
M(u)=\int_{\mathcal{K}}\mathcal{G}(u)d\beta \text{\ \ for all }u\in AP(%
\mathbb{R}^{N})\text{.}
\end{equation*}
\end{theorem}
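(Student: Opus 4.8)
The plan is to invoke the Gelfand representation theorem for commutative C*-algebras applied to $AP(\mathbb{R}^N)$, and then identify the resulting abstract measure with the Haar measure on the Bohr compactification. Since $AP(\mathbb{R}^N)$ is a commutative C*-algebra with unit (the unit being the constant function $1$), the Gelfand--Naimark theorem guarantees that the Gelfand transform $\mathcal{G}\colon AP(\mathbb{R}^N)\to \mathcal{C}(\Delta)$ is an isometric $\ast$-isomorphism onto $\mathcal{C}(\Delta)$, where $\Delta$ is the spectrum (maximal ideal space) of $AP(\mathbb{R}^N)$. By definition this spectrum is exactly the Bohr compactification $\mathcal{K}=b\mathbb{R}^N$.

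Let me think about what needs verification here. The isometric $\ast$-isomorphism part is pure Gelfand--Naimark. The claim that "every element of $AP(\mathbb{R}^N)$ is viewed as a restriction to $\mathbb{R}^N$ of a unique element in $\mathcal{C}(\mathcal{K})$" uses the canonical dense embedding $\mathbb{R}^N\hookrightarrow\mathcal{K}$. Each point $x\in\mathbb{R}^N$ gives an evaluation character $u\mapsto u(x)$, hence a point of the spectrum; the image of $\mathbb{R}^N$ is dense in $\mathcal{K}$ (this is what makes $\mathcal{K}$ a compactification). Under $\mathcal{G}$, the function $\mathcal{G}(u)\in\mathcal{C}(\mathcal{K})$ restricted to this copy of $\mathbb{R}^N$ recovers the original $u$, and density plus continuity give uniqueness of the extension.

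The substantive part is the integral representation of the mean value. So the real content is the integral formula $M(u)=\int_{\mathcal{K}}\mathcal{G}(u)\,d\beta$.

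**The plan is to** invoke the Gelfand representation theory for the commutative unital $C^{\ast}$-algebra $AP(\mathbb{R}^{N})$, and then identify the abstract representing measure with the normalized Haar measure on $\mathcal{K}=b\mathbb{R}^{N}$. Since $AP(\mathbb{R}^{N})$ is a commutative $C^{\ast}$-algebra with unit (the constant function $1$), the Gelfand--Naimark theorem furnishes an isometric $\ast$-isomorphism $\mathcal{G}$ of $AP(\mathbb{R}^{N})$ onto $\mathcal{C}(\Delta)$, where $\Delta$ is the spectrum. By the very definition of the Bohr compactification recalled above, $\Delta=\mathcal{K}$. For the statement that every $u\in AP(\mathbb{R}^{N})$ is a restriction of a unique element of $\mathcal{C}(\mathcal{K})$, I would use the canonical map $\mathbb{R}^{N}\to\mathcal{K}$ sending $x$ to the evaluation character $u\mapsto u(x)$; this map has dense image, and $\mathcal{G}(u)$ restricted to this copy of $\mathbb{R}^{N}$ returns $u$, so density together with continuity forces uniqueness.

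The substantive part is the integral representation of the mean value, and this is where I expect the main work to lie. First I would observe that $M$ is a bounded linear functional on $AP(\mathbb{R}^{N})$ which is positive (it maps nonnegative functions to nonnegative reals, being a limit of averages) and normalized by $M(1)=1$. Transporting $M$ through $\mathcal{G}^{-1}$ yields a positive normalized linear functional on $\mathcal{C}(\mathcal{K})$, so by the Riesz representation theorem there is a unique regular Borel probability measure $\mu$ on $\mathcal{K}$ with $M(u)=\int_{\mathcal{K}}\mathcal{G}(u)\,d\mu$ for all $u$. It then remains to show $\mu=\beta$, the Haar measure.

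To identify $\mu$ with the Haar measure I would exploit the translation invariance of $M$. The key point is that the translation action of $\mathbb{R}^{N}$ on $AP(\mathbb{R}^{N})$, namely $u\mapsto u(\cdot+a)$, is intertwined by $\mathcal{G}$ with the group translations on $\mathcal{K}$ (here one uses that $\mathcal{K}$ is itself a compact Abelian group containing a dense homomorphic image of $\mathbb{R}^{N}$, so translations extend continuously to all of $\mathcal{K}$). Since $M(u(\cdot+a))=M(u)$ for every $a\in\mathbb{R}^{N}$ by property (iii), the measure $\mu$ is invariant under the dense subgroup of translations, hence by continuity under all of $\mathcal{K}$; a translation-invariant regular Borel probability measure on a compact group is, by uniqueness of Haar measure, exactly the normalized Haar measure $\beta$. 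This gives the desired formula.

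The hard part will be the identification $\mu=\beta$, and specifically the verification that $\mathcal{G}$ intertwines $\mathbb{R}^{N}$-translations with the group structure on $\mathcal{K}$ — in other words, that the abstract spectrum $\mathcal{K}$ genuinely carries a compatible topological group structure making the embedding $\mathbb{R}^{N}\hookrightarrow\mathcal{K}$ a dense continuous homomorphism. This is where one must actually use that one is dealing with the Bohr compactification and not merely the spectrum of an arbitrary $C^{\ast}$-algebra; the almost periodicity (precompactness of the orbit of translates) is what guarantees the translation action is by continuous automorphisms and extends to $\mathcal{K}$. Once that structural fact is in hand, uniqueness of Haar measure closes the argument immediately.
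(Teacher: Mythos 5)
Your argument is correct and follows the same route the paper intends: the paper offers no proof beyond attributing the result to Gelfand representation theory, and your proposal is the standard unpacking of that citation (Gelfand--Naimark for the isometric $\ast$-isomorphism, then Riesz representation plus translation invariance of $M$ and uniqueness of Haar measure to identify the representing measure with $\beta$). You also correctly isolate the one genuinely structural point -- that $\mathcal{K}=b\mathbb{R}^{N}$ carries a compatible group structure with $\mathbb{R}^{N}$ densely and homomorphically embedded, so that $\mathcal{G}$ intertwines translations -- which is exactly what the paper takes for granted when it asserts that $\mathcal{K}$ is a compact Abelian group with Haar measure $\beta$.
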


The isometric $\ast $-isomorphism $\mathcal{G}$ of the above theorem is
referred to as the Gelfand transformation. The image $\mathcal{G}(u)$ of $u$
will very often be denoted by $\widehat{u}$.

For $m\in \mathbb{N}$ (the positive integers) we introduce the space $AP^{m}(%
\mathbb{R}^{N})=\{u\in AP(\mathbb{R}^{N}):D_{y}^{\alpha }u\in AP(\mathbb{R}%
^{N})$ for every $\alpha =(\alpha _{1},\ldots ,\alpha _{N})\in \mathbb{N}%
^{N} $ with $\left\vert \alpha \right\vert \leq m\}$, a Banach space with
the norm $\left\Vert \left\vert u\right\vert \right\Vert
_{m}=\sup_{\left\vert \alpha \right\vert \leq m}\sup_{y\in \mathbb{R}%
^{N}}\left\vert D_{y}^{\alpha }u\right\vert $, where $D_{y}^{\alpha }=\frac{%
\partial ^{\left\vert \alpha \right\vert }}{\partial y_{1}^{\alpha
_{1}}\ldots \partial y_{N}^{\alpha _{N}}}$. We also define the space $%
AP^{\infty }(\mathbb{R}^{N})=\cap _{m}AP^{m}(\mathbb{R}^{N})$, a Fr\'{e}chet
space with respect to the natural topology of projective limit, defined by
the increasing family of norms $\left\Vert \left\vert \cdot \right\vert
\right\Vert _{m}$ ($m\in \mathbb{N}$).

Next, let $B_{AP}^{p}(\mathbb{R}^{N})$ ($1\leq p<\infty $) denote the space
of Besicovitch almost periodic functions on $\mathbb{R}^{N}$, that is the
closure of $AP(\mathbb{R}^{N})$ with respect to the Besicovitch seminorm 
\begin{equation*}
\left\Vert u\right\Vert _{p}=\left( \underset{r\rightarrow +\infty }{\lim
\sup }\frac{1}{\left\vert B_{r}\right\vert }\int_{B_{r}}\left\vert
u(y)\right\vert ^{p}dy\right) ^{1/p}
\end{equation*}%
where $B_{r}$ is the open ball of $\mathbb{R}^{N}$ of radius $r$ centered at
the origin. It is known that $B_{AP}^{p}(\mathbb{R}^{N})$ is a complete
seminormed vector space verifying $B_{AP}^{q}(\mathbb{R}^{N})\subset
B_{AP}^{p}(\mathbb{R}^{N})$ for $1\leq p\leq q<\infty $. From this last
property one may naturally define the space $B_{AP}^{\infty }(\mathbb{R}%
^{N}) $ as follows: 
\begin{equation*}
B_{AP}^{\infty }(\mathbb{R}^{N})=\{f\in \cap _{1\leq p<\infty }B_{AP}^{p}(%
\mathbb{R}^{N}):\sup_{1\leq p<\infty }\left\Vert f\right\Vert _{p}<\infty \}%
\text{.}\;\;\;\;\;\;\;\;\;
\end{equation*}%
We endow $B_{AP}^{\infty }(\mathbb{R}^{N})$ with the seminorm $\left[ f%
\right] _{\infty }=\sup_{1\leq p<\infty }\left\Vert f\right\Vert _{p}$,
which makes it a complete seminormed space. We recall that the spaces $%
B_{AP}^{p}(\mathbb{R}^{N})$ ($1\leq p\leq \infty $) are not general Fr\'{e}%
chet spaces since they are not separated. The following properties are worth
noticing \cite{CMP, NA}:

\begin{itemize}
\item[(\textbf{1)}] The Gelfand transformation $\mathcal{G}:AP(\mathbb{R}%
^{N})\rightarrow \mathcal{C}(\mathcal{K})$ extends by continuity to a unique
continuous linear mapping, still denoted by $\mathcal{G}$, of $B_{AP}^{p}(%
\mathbb{R}^{N})$ into $L^{p}(\mathcal{K})$, which in turn induces an
isometric isomorphism $\mathcal{G}_{1}$, of $B_{AP}^{p}(\mathbb{R}^{N})/%
\mathcal{N}=\mathcal{B}_{AP}^{p}(\mathbb{R}^{N})$ onto $L^{p}(\mathcal{K})$
(where $\mathcal{N}=\{u\in B_{AP}^{p}(\mathbb{R}^{N}):\mathcal{G}(u)=0\}$).
Moreover if $u\in B_{AP}^{p}(\mathbb{R}^{N})\cap L^{\infty }(\mathbb{R}^{N})$
then $\mathcal{G}(u)\in L^{\infty }(\mathcal{K})$ and $\left\Vert \mathcal{G}%
(u)\right\Vert _{L^{\infty }(\mathcal{K})}\leq \left\Vert u\right\Vert
_{L^{\infty }(\mathbb{R}^{N})}$.

\item[(\textbf{2)}] The mean value $M$ viewed as defined on $AP(\mathbb{R}%
^{N})$, extends by continuity to a positive continuous linear form (still
denoted by $M$) on $B_{AP}^{p}(\mathbb{R}^{N})$ satisfying $M(u)=\int_{%
\mathcal{K}}\mathcal{G}(u)d\beta $ ($u\in B_{AP}^{p}(\mathbb{R}^{N})$).
Furthermore, $M(u(\cdot +a))=M(u)$ for each $u\in B_{AP}^{p}(\mathbb{R}^{N})$
and all $a\in \mathbb{R}^{N}$, where $u(\cdot +a)(z)=u(z+a)$ for almost all $%
z\in \mathbb{R}^{N}$. Moreover for $u\in B_{AP}^{p}(\mathbb{R}^{N})$ we have 
$\left\Vert u\right\Vert _{p}=\left[ M(\left\vert u\right\vert ^{p})\right]
^{1/p}$.
\end{itemize}

We refer to \cite{Frid, Blot} for the definitions and properties of the
vector-valued spaces of almost periodic functions, namely, $AP(\mathbb{R}%
^{N};X)$ and $B_{AP}^{p}(\mathbb{R}^{N};X)$ and the connected spaces $%
\mathcal{C}(\mathcal{K};X)$ and $L^{p}(\mathcal{K};X)$, where $X$ is a given
Banach space. In particular when $X=\mathbb{C}$ we get $AP(\mathbb{R}^{N})$
and $B_{AP}^{p}(\mathbb{R}^{N})$ respectively.

Now let $\mathbb{R}_{y,\tau }^{N+1}=\mathbb{R}_{y}^{N}\times \mathbb{R}%
_{\tau }$ denotes the space $\mathbb{R}^{N}\times \mathbb{R}$ with generic
variables $(y,\tau )$. It is known that $AP(\mathbb{R}_{y,\tau }^{N+1})=AP(%
\mathbb{R}_{\tau };AP(\mathbb{R}_{y}^{N}))$ is the closure in $\mathcal{B}(%
\mathbb{R}_{y,\tau }^{N+1})$ of the tensor product $AP(\mathbb{R}%
_{y}^{N})\otimes AP(\mathbb{R}_{\tau })$ \cite{Chou}. In what follows, we
set $A_{y}=AP(\mathbb{R}_{y}^{N})$, $A_{\tau }=AP(\mathbb{R}_{\tau })$ and $%
A=AP(\mathbb{R}_{y,\tau }^{N+1})$. We denote the mean value on $A_{\zeta }$ (%
$\zeta =y,\tau $) by $M_{\zeta }$.

In the above notations, let $g\in A$ with $M_{y}(g)=0$. Then arguing as in 
\cite[p. 246]{Jikov} we see that there exists a unique $R\in A$ with $%
M_{y}(R)=0$ such that 
\begin{equation}
g=\Delta _{y}R  \label{2.1}
\end{equation}%
where $\Delta _{y}$ stands for the Laplacian operator defined on $\mathbb{R}%
_{y}^{N}$: $\Delta _{y}=\sum_{i=1}^{N}\partial ^{2}/\partial y_{i}^{2}$.
Owing to the hypoellipticity of the Laplacian on $\mathbb{R}^{N}$ we deduce
that the function $R$ is at least of class $C^{2}$ with respect to the
variable $y$. The above fact will be very useful in the last two sections of
the work.

\bigskip

Next following the theory presented in \cite[Chap. B1]{Vo-Khac} (see also 
\cite{Blot}), let $1\leq p<\infty $ and consider the $N$-parameter group of
isometries $\{T(y):y\in \mathbb{R}^{N}\}$ defined by 
\begin{equation*}
T(y):\mathcal{B}_{AP}^{p}(\mathbb{R}^{N})\rightarrow \mathcal{B}_{AP}^{p}(%
\mathbb{R}^{N})\text{,\ }T(y)(u+\mathcal{N})=\tau _{y}u+\mathcal{N}\text{
for }u\in B_{AP}^{p}(\mathbb{R}^{N})
\end{equation*}%
where $\tau _{y}u=u(\cdot +y)$. Since the elements of $AP(\mathbb{R}^{N})$
are uniformly continuous, $\{T(y):y\in \mathbb{R}^{N}\}$ is a strongly
continuous group in the sense of semigroups: $T(y)(u+\mathcal{N})\rightarrow
u+\mathcal{N}$ in $\mathcal{B}_{AP}^{p}(\mathbb{R}^{N})$ as $\left\vert
y\right\vert \rightarrow 0$. In view of the isometric isomorphism $\mathcal{G%
}_{1}$ we associated to $\{T(y):y\in \mathbb{R}^{N}\}$ the following $N$%
-parameter group $\{\overline{T}(y):y\in \mathbb{R}^{N}\}$ defined by 
\begin{equation*}
\begin{array}{l}
\overline{T}(y):L^{p}(\mathcal{K})\rightarrow L^{p}(\mathcal{K}) \\ 
\overline{T}(y)\mathcal{G}_{1}(u+\mathcal{N})=\mathcal{G}_{1}(T(y)(u+%
\mathcal{N}))=\mathcal{G}_{1}(\tau _{y}u+\mathcal{N})\text{\ for }u\in
B_{AP}^{p}(\mathbb{R}^{N})\text{.}%
\end{array}%
\end{equation*}%
The group $\{\overline{T}(y):y\in \mathbb{R}^{N}\}$ is also strongly
continuous. The infinitesimal generator of $T(y)$ (resp. $\overline{T}(y)$)
along the $i$th coordinate direction is denoted by $D_{i,p}$ (resp. $%
\partial _{i,p}$) and is defined by 
\begin{equation*}
\begin{array}{l}
D_{i,p}u=\lim_{t\rightarrow 0}t^{-1}\left( T(te_{i})u-u\right) \text{\ in }%
\mathcal{B}_{AP}^{p}(\mathbb{R}^{N})\text{ } \\ 
\text{(resp. }\partial _{i,p}v=\lim_{t\rightarrow 0}t^{-1}\left( \overline{T}%
(te_{i})v-v\right) \text{\ in }L^{p}(\mathcal{K})\text{)}%
\end{array}%
\end{equation*}%
where: here and henceforth, we have used the same letter $u$ to denote the
equivalence class of an element $u\in B_{AP}^{p}(\mathbb{R}^{N})$ in $%
\mathcal{B}_{AP}^{p}(\mathbb{R}^{N})$, $e_{i}=(\delta _{ij})_{1\leq j\leq N}$
($\delta _{ij}$ being the Kronecker $\delta $). The domain of $D_{i,p}$
(resp. $\partial _{i,p}$) in $\mathcal{B}_{AP}^{p}(\mathbb{R}^{N})$ (resp. $%
L^{p}(\mathcal{K})$) is denoted by $\mathcal{D}_{i,p}$ (resp. $\mathcal{W}%
_{i,p}$). By using the general theory of semigroups \cite[Chap. VIII,
Section 1]{DS}, the following result holds.

\begin{proposition}
\label{p2.1}$\mathcal{D}_{i,p}$ (resp. $\mathcal{W}_{i,p}$) is a vector
subspace of $\mathcal{B}_{AP}^{p}(\mathbb{R}^{N})$ (resp. $L^{p}(\mathcal{K}%
) $), $D_{i,p}:\mathcal{D}_{i,p}\rightarrow \mathcal{B}_{AP}^{p}(\mathbb{R}%
^{N})$ (resp. $\partial _{i,p}:\mathcal{W}_{i,p}\rightarrow L^{p}(\mathcal{K}%
)$) is a linear operator, $\mathcal{D}_{i,p}$ (resp. $\mathcal{W}_{i,p}$) is
dense in $\mathcal{B}_{AP}^{p}(\mathbb{R}^{N})$ (resp. $L^{p}(\mathcal{K})$%
), and the graph of $D_{i,p}$ (resp. $\partial _{i,p}$) is closed in $%
\mathcal{B}_{AP}^{p}(\mathbb{R}^{N})\times \mathcal{B}_{AP}^{p}(\mathbb{R}%
^{N})$ (resp. $L^{p}(\mathcal{K})\times L^{p}(\mathcal{K})$).
\end{proposition}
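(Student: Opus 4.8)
The plan is to recognize Proposition \ref{p2.1} as an instance of the classical structure theorem for the infinitesimal generator of a strongly continuous one-parameter group, applied coordinate by coordinate. First I would fix the index $i$ and restrict attention to the one-parameter subgroup $\{T(te_{i}):t\in \mathbb{R}\}$ of the $N$-parameter group $\{T(y):y\in \mathbb{R}^{N}\}$. Since each $T(y)$ is a linear isometry of $\mathcal{B}_{AP}^{p}(\mathbb{R}^{N})$ and $t\mapsto te_{i}$ is linear, this subgroup is itself a one-parameter group of linear isometries, and its strong continuity at $t=0$ is inherited from the strong continuity of the full $N$-parameter group already established above (take $y=te_{i}$ and let $t\rightarrow 0$). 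The underlying space $\mathcal{B}_{AP}^{p}(\mathbb{R}^{N})$ is a Banach space, being isometrically isomorphic to $L^{p}(\mathcal{K})$ via $\mathcal{G}_{1}$, so the hypotheses of the semigroup machinery of \cite[Chap. VIII, Section 1]{DS} are met.

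With this set-up, all four assertions about $D_{i,p}$ follow from the general theory: for any strongly continuous one-parameter group on a Banach space, the infinitesimal generator is a densely defined closed linear operator. Linearity of $D_{i,p}$ and the fact that $\mathcal{D}_{i,p}$ is a linear subspace are immediate from the linearity of the difference quotients $t^{-1}(T(te_{i})u-u)$ and the linearity of the limit. Density of $\mathcal{D}_{i,p}$ is obtained in the standard way, by showing that the Ces\`{a}ro-type averages $u_{s}=s^{-1}\int_{0}^{s}T(te_{i})u\,dt$ lie in $\mathcal{D}_{i,p}$ for every $u$ and converge to $u$ as $s\rightarrow 0$. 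Closedness of the graph is the usual argument: if $u_{n}\rightarrow u$ and $D_{i,p}u_{n}\rightarrow v$, one integrates the identity $T(se_{i})u_{n}-u_{n}=\int_{0}^{s}T(te_{i})D_{i,p}u_{n}\,dt$ and passes to the limit, using the uniform boundedness (indeed isometry) of the operators, to conclude that $u\in \mathcal{D}_{i,p}$ and $D_{i,p}u=v$.

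For the operator $\partial _{i,p}$ on $L^{p}(\mathcal{K})$ I would avoid repeating the argument by exploiting the intertwining relation built into the definition of $\overline{T}(y)$. Since $\mathcal{G}_{1}$ is an isometric isomorphism of $\mathcal{B}_{AP}^{p}(\mathbb{R}^{N})$ onto $L^{p}(\mathcal{K})$ satisfying $\overline{T}(y)\mathcal{G}_{1}=\mathcal{G}_{1}T(y)$, the generator $\partial _{i,p}$ is the conjugate $\mathcal{G}_{1}D_{i,p}\mathcal{G}_{1}^{-1}$ with domain $\mathcal{W}_{i,p}=\mathcal{G}_{1}(\mathcal{D}_{i,p})$. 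Because an isometric isomorphism carries linear subspaces to linear subspaces, dense sets to dense sets, and closed graphs to closed graphs, every property established for $D_{i,p}$ transfers verbatim to $\partial _{i,p}$.

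The only real obstacle is a matter of bookkeeping: one must confirm that the generator of the restricted one-parameter subgroup $\{T(te_{i})\}$ coincides with the directional generator $D_{i,p}$ as defined in the text (it does, by definition), and that the quotient space $\mathcal{B}_{AP}^{p}(\mathbb{R}^{N})$ is genuinely complete so that the cited theorems apply; both points are settled by the isometric identification with $L^{p}(\mathcal{K})$. No new estimate is required beyond the general semigroup theory.
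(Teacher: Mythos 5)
Your proposal is correct and follows essentially the same route as the paper, which gives no argument beyond invoking the general theory of semigroups from Dunford--Schwartz \cite[Chap. VIII, Section 1]{DS}; you have simply written out the standard details (Ces\`aro averages for density, the integral identity for closedness, and transfer to $\partial_{i,p}$ by conjugation with the isometric isomorphism $\mathcal{G}_{1}$) that the citation is meant to cover.
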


In the sequel we denote by $\varrho $ the canonical mapping of $B_{AP}^{p}(%
\mathbb{R}^{N})$ onto $\mathcal{B}_{AP}^{p}(\mathbb{R}^{N})$, that is, $%
\varrho (u)=u+\mathcal{N}$ for $u\in B_{AP}^{p}(\mathbb{R}^{N})$. The
following properties are immediate. The verification can be found either in 
\cite[Chap. B1]{Vo-Khac} or in \cite{Blot}.

\begin{lemma}
\label{l2.1}Let $1\leq i\leq N$. \emph{(1)} If $u\in AP^{1}(\mathbb{R}^{N})$
then $\varrho (u)\in \mathcal{D}_{i,p}$ and 
\begin{equation}
D_{i,p}\varrho (u)=\varrho \left( \frac{\partial u}{\partial y_{i}}\right)
.\ \ \ \ \ \ \ \ \ \ \ \ \ \ \ \ \ \ \ \ \ \ \ \ \ \ \ \ \ \ \ \ \ 
\label{2.2}
\end{equation}%
\emph{(2)} If $u\in \mathcal{D}_{i,p}$ then $\mathcal{G}_{1}(u)\in \mathcal{W%
}_{i,p}$ and $\mathcal{G}_{1}(D_{i,p}u)=\partial _{i,p}\mathcal{G}_{1}(u)$.
\end{lemma}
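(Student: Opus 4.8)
The plan is to handle the two assertions separately, treating \emph{(1)} as the only substantive computation and deriving \emph{(2)} formally from the way the group $\{\overline{T}(y)\}$ is intertwined with $\{T(y)\}$ through $\mathcal{G}_{1}$.

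For \emph{(1)}, I would compute the difference quotient defining $D_{i,p}$ on a representative. Since $T(te_{i})\varrho (u)=\varrho (\tau _{te_{i}}u)$ with $\tau _{te_{i}}u=u(\cdot +te_{i})$, one has
\[
\frac{1}{t}\bigl( T(te_{i})\varrho (u)-\varrho (u)\bigr) =\varrho \!\left( \frac{u(\cdot +te_{i})-u}{t}\right) ,
\]
so it suffices to show that $\frac{u(\cdot +te_{i})-u}{t}\rightarrow \partial u/\partial y_{i}$ in the Besicovitch seminorm $\left\Vert \cdot \right\Vert _{p}$ as $t\rightarrow 0$. Because $u\in AP^{1}(\mathbb{R}^{N})$, the derivative $\partial u/\partial y_{i}$ lies in $AP(\mathbb{R}^{N})$ and is therefore uniformly continuous. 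Writing, by the fundamental theorem of calculus,
\[
\frac{u(y+te_{i})-u(y)}{t}-\frac{\partial u}{\partial y_{i}}(y)=\frac{1}{t}\int_{0}^{t}\left( \frac{\partial u}{\partial y_{i}}(y+se_{i})-\frac{\partial u}{\partial y_{i}}(y)\right) ds,
\]
the right-hand side is bounded in the sup norm by $\sup_{\left\vert s\right\vert \leq \left\vert t\right\vert }\sup_{y\in \mathbb{R}^{N}}\left\vert \partial u/\partial y_{i}(y+se_{i})-\partial u/\partial y_{i}(y)\right\vert$, which tends to $0$ as $t\rightarrow 0$ by uniform continuity. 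Since $\left\Vert v\right\Vert _{p}\leq \left\Vert v\right\Vert _{L^{\infty }(\mathbb{R}^{N})}$ for every $v$, the convergence holds a fortiori in $\left\Vert \cdot \right\Vert _{p}$, and passing to equivalence classes I conclude that $\varrho (u)\in \mathcal{D}_{i,p}$ with $D_{i,p}\varrho (u)=\varrho (\partial u/\partial y_{i})$.

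For \emph{(2)}, I would use that the group $\{\overline{T}(y)\}$ was defined precisely so that $\mathcal{G}_{1}(T(y)v)=\overline{T}(y)\mathcal{G}_{1}(v)$ for all $v\in \mathcal{B}_{AP}^{p}(\mathbb{R}^{N})$. Thus, for $u\in \mathcal{D}_{i,p}$,
\[
\frac{1}{t}\bigl( \overline{T}(te_{i})\mathcal{G}_{1}(u)-\mathcal{G}_{1}(u)\bigr) =\mathcal{G}_{1}\!\left( \frac{1}{t}\bigl( T(te_{i})u-u\bigr) \right) .
\]
As $t\rightarrow 0$ the argument of $\mathcal{G}_{1}$ converges to $D_{i,p}u$ in $\mathcal{B}_{AP}^{p}(\mathbb{R}^{N})$ by the definition of $\mathcal{D}_{i,p}$, and since $\mathcal{G}_{1}$ is an isometric isomorphism onto $L^{p}(\mathcal{K})$ (hence continuous), the right-hand side converges to $\mathcal{G}_{1}(D_{i,p}u)$ in $L^{p}(\mathcal{K})$. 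This shows that the defining limit for $\partial _{i,p}$ exists at $\mathcal{G}_{1}(u)$, i.e. $\mathcal{G}_{1}(u)\in \mathcal{W}_{i,p}$, and that $\partial _{i,p}\mathcal{G}_{1}(u)=\mathcal{G}_{1}(D_{i,p}u)$.

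The only real work is in \emph{(1)}; the main point there is to reduce convergence in the Besicovitch seminorm to the much stronger uniform convergence of the difference quotients, which is legitimate because $\left\Vert \cdot \right\Vert _{p}\leq \left\Vert \cdot \right\Vert _{L^{\infty }}$ and because membership in $AP^{1}(\mathbb{R}^{N})$ guarantees that the derivative is uniformly continuous. Assertion \emph{(2)} is then essentially automatic from the construction of $\overline{T}$ together with the continuity of $\mathcal{G}_{1}$, and I do not expect any genuine obstacle there.
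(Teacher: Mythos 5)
Your argument is correct: part \emph{(1)} reduces the Besicovitch-seminorm convergence of the difference quotients to uniform convergence via $\left\Vert \cdot \right\Vert _{p}\leq \left\Vert \cdot \right\Vert _{L^{\infty }}$ and the uniform continuity of $\partial u/\partial y_{i}\in AP(\mathbb{R}^{N})$, and part \emph{(2)} follows from the intertwining relation defining $\overline{T}(y)$ together with the continuity of the isometry $\mathcal{G}_{1}$. The paper itself offers no proof (it declares the lemma ``immediate'' and refers to Vo-Khac and Blot), and what you have written is precisely the standard verification those references contain, so nothing further is needed.
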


One can naturally define higher order derivatives by setting $D_{p}^{\alpha
}=D_{1,p}^{\alpha _{1}}\circ \cdot \cdot \cdot \circ D_{N,p}^{\alpha _{N}}$
(resp. $\partial _{p}^{\alpha }=\partial _{1,p}^{\alpha _{1}}\circ \cdot
\cdot \cdot \circ \partial _{N,p}^{\alpha _{N}}$) for $\alpha =(\alpha
_{1},...,\alpha _{N})\in \mathbb{N}^{N}$ with $D_{i,p}^{\alpha
_{i}}=D_{i,p}\circ \cdot \cdot \cdot \circ D_{i,p}$, $\alpha _{i}$-times.
Now, let 
\begin{equation*}
\mathcal{B}_{AP}^{1,p}(\mathbb{R}^{N})=\cap _{i=1}^{N}\mathcal{D}%
_{i,p}=\{u\in \mathcal{B}_{AP}^{p}(\mathbb{R}^{N}):D_{i,p}u\in \mathcal{B}%
_{AP}^{p}(\mathbb{R}^{N})\ \forall 1\leq i\leq N\}
\end{equation*}%
and 
\begin{equation*}
\mathcal{D}_{AP}(\mathbb{R}^{N})=\{u\in \mathcal{B}_{AP}^{\infty }(\mathbb{R}%
^{N}):D_{\infty }^{\alpha }u\in \mathcal{B}_{AP}^{\infty }(\mathbb{R}^{N})\
\forall \alpha \in \mathbb{N}^{N}\}.
\end{equation*}%
It can be shown that $\mathcal{D}_{AP}(\mathbb{R}^{N})$ is dense in $%
\mathcal{B}_{AP}^{p}(\mathbb{R}^{N})$, $1\leq p<\infty $. We also have that $%
\mathcal{B}_{AP}^{1,p}(\mathbb{R}^{N})$ is a Banach space under the norm 
\begin{equation*}
\left\| u\right\| _{\mathcal{B}_{AP}^{1,p}(\mathbb{R}^{N})}=\left( \left\|
u\right\| _{p}^{p}+\sum_{i=1}^{N}\left\| D_{i,p}u\right\| _{p}^{p}\right)
^{1/p}\ \ (u\in \mathcal{B}_{AP}^{1,p}(\mathbb{R}^{N}));
\end{equation*}%
this comes from the fact that the graph of $D_{i,p}$ is closed.

The counter-part of the above properties also holds with 
\begin{equation*}
W^{1,p}(\mathcal{K})=\cap _{i=1}^{N}\mathcal{W}_{i,p}\text{\ in place of }%
\mathcal{B}_{AP}^{1,p}(\mathbb{R}^{N})
\end{equation*}%
and 
\begin{equation*}
\mathcal{D}(\mathcal{K})=\{u\in L^{\infty }(\mathcal{K}):\partial _{\infty
}^{\alpha }u\in L^{\infty }(\mathcal{K})\ \forall \alpha \in \mathbb{N}^{N}\}%
\text{\ in that of }\mathcal{D}_{AP}(\mathbb{R}^{N})\text{.}
\end{equation*}%
Moreover the restriction of $\mathcal{G}_{1}$ to $\mathcal{B}_{AP}^{1,p}(%
\mathbb{R}^{N})$ is an isometric isomorphism of $\mathcal{B}_{AP}^{1,p}(%
\mathbb{R}^{N})$ onto $W^{1,p}(\mathcal{K})$; this comes from [Part (2) of]
Lemma \ref{l2.1}.

Let $u\in \mathcal{D}_{i,p}$ ($p\geq 1$, $1\leq i\leq N$). Then the
inequality 
\begin{equation*}
\left\Vert t^{-1}(T(te_{i})u-u)-D_{i,p}u\right\Vert _{1}\leq c\left\Vert
t^{-1}(T(te_{i})u-u)-D_{i,p}u\right\Vert _{p}
\end{equation*}%
for a positive constant $c$ independent of $u$ and $t$, yields $%
D_{i,1}u=D_{i,p}u$, so that $D_{i,p}$ is the restriction to $\mathcal{B}%
_{AP}^{p}(\mathbb{R}^{N})$ of $D_{i,1}$. Therefore, for all $u\in \mathcal{D}%
_{i,\infty }$ we have $u\in \mathcal{D}_{i,p}$ ($p\geq 1$) and $D_{i,\infty
}u=D_{i,p}u$\ $\forall 1\leq i\leq N$. We will need the following result in
the sequel.

\begin{lemma}
\label{l2.3}We have $\mathcal{D}_{AP}(\mathbb{R}^{N})=\varrho (AP^{\infty }(%
\mathbb{R}^{N}))$.
\end{lemma}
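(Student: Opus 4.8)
The plan is to prove the two inclusions separately, with $\varrho (AP^{\infty }(\mathbb{R}^{N}))\subseteq \mathcal{D}_{AP}(\mathbb{R}^{N})$ being routine and the reverse inclusion carrying all the content. For the easy direction, let $u\in AP^{\infty }(\mathbb{R}^{N})$. Every derivative $D_{y}^{\alpha }u$ then lies in $AP(\mathbb{R}^{N})$, hence is bounded, so $\left\Vert D_{y}^{\alpha }u\right\Vert _{p}\leq \left\Vert D_{y}^{\alpha }u\right\Vert _{L^{\infty }(\mathbb{R}^{N})}<\infty $ for every $p$, and consequently $\varrho (D_{y}^{\alpha }u)\in \mathcal{B}_{AP}^{\infty }(\mathbb{R}^{N})$. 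Applying Part (1) of Lemma \ref{l2.1} once in each coordinate direction and iterating yields $D_{\infty }^{\alpha }\varrho (u)=\varrho (D_{y}^{\alpha }u)$ for all $\alpha $, so $\varrho (u)$ together with all its abstract derivatives lies in $\mathcal{B}_{AP}^{\infty }(\mathbb{R}^{N})$; that is, $\varrho (u)\in \mathcal{D}_{AP}(\mathbb{R}^{N})$.

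For the reverse inclusion I would first transport the problem to the Bohr compactification. Since $\mathcal{G}_{1}$ intertwines $D_{i,p}$ with $\partial _{i,p}$ (Part (2) of Lemma \ref{l2.1}), it maps $\mathcal{D}_{AP}(\mathbb{R}^{N})$ isometrically onto $\mathcal{D}(\mathcal{K})$ and maps $\varrho (AP^{\infty }(\mathbb{R}^{N}))$ onto $\mathcal{G}(AP^{\infty }(\mathbb{R}^{N}))$; hence the asserted equality is equivalent to $\mathcal{D}(\mathcal{K})=\mathcal{G}(AP^{\infty }(\mathbb{R}^{N}))$. Because $\mathcal{G}$ carries $AP(\mathbb{R}^{N})$ onto $\mathcal{C}(\mathcal{K})$ (Theorem \ref{t2.1}), it suffices to show that every $f\in \mathcal{D}(\mathcal{K})$ — an element of $L^{\infty }(\mathcal{K})$ all of whose abstract derivatives lie in $L^{\infty }(\mathcal{K})$ — admits a continuous representative. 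Once this is known, $f\in \mathcal{C}(\mathcal{K})$ gives $f=\mathcal{G}(w)$ with $w\in AP(\mathbb{R}^{N})$, and the same continuity applied to each $\partial _{\infty }^{\alpha }f$ together with the (converse of the) intertwining relation $D_{i,p}\varrho (w)=\varrho (w_{i})$ identifies the abstract derivatives with classical ones, promoting $w$ to $AP^{\infty }(\mathbb{R}^{N})$.

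To establish the continuity of $f$ I would regularize by the group action. For a mollifier $\varphi \in C_{c}^{\infty }(\mathbb{R}^{N})$, set $f_{\varphi }=\int_{\mathbb{R}^{N}}\varphi (s)\overline{T}(s)f\,ds$; the group law $\overline{T}(te_{i})\overline{T}(s)=\overline{T}(s+te_{i})$ and differentiation under the integral give $\partial _{i,p}f_{\varphi }=-\int (\partial _{i}\varphi )(s)\overline{T}(s)f\,ds$, so $f_{\varphi }$ has abstract derivatives of every order. Since $\overline{T}(s)$ are isometries, $w\mapsto w_{\varphi }$ is bounded on each $L^{p}(\mathcal{K})$ with norm $\leq \left\Vert \varphi \right\Vert _{L^{1}}$; combining this with the Bochner--Fej\'{e}r approximation of $f$ by trigonometric polynomials (for which $(\chi _{\lambda })_{\varphi }=\widehat{\varphi }(-\lambda )\chi _{\lambda }$ is plainly the image of an element of $AP^{\infty }(\mathbb{R}^{N})$) is meant to show $f_{\varphi }\in \mathcal{G}(AP^{\infty }(\mathbb{R}^{N}))$. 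Taking $\varphi =\varphi _{\delta }$ an approximate identity, strong continuity of $\{\overline{T}(y)\}$ gives $f_{\varphi _{\delta }}\to f$ in every $L^{p}(\mathcal{K})$, and the hypothesis $f\in \mathcal{D}(\mathcal{K})$ is used fully through $\partial _{\infty }^{\alpha }f_{\varphi _{\delta }}=(\partial _{\infty }^{\alpha }f)_{\varphi _{\delta }}$ with $\partial _{\infty }^{\alpha }f\in L^{\infty }(\mathcal{K})$, which should furnish uniform control of the regularized family and thereby pin down a continuous (indeed smooth almost periodic) limit.

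The step I expect to be the main obstacle is precisely this upgrade from mean-value (Besicovitch/$L^{p}(\mathcal{K})$) regularity to uniform regularity of a representative: convergence in the Besicovitch seminorm, equivalently in $L^{p}(\mathcal{K})$, does not by itself control local or uniform behaviour, so the passage $f_{\varphi _{\delta }}\to f$ must be promoted to the topology of $AP^{\infty }(\mathbb{R}^{N})$ (respectively to uniform convergence on $\mathcal{K}$) before completeness of that Fr\'{e}chet space lets one conclude. In effect one needs a Sobolev-type embedding for the smooth vectors of the translation group on the infinite-dimensional compact group $\mathcal{K}$, and the delicate point is to extract this from the fact that \emph{all} abstract derivatives, not merely finitely many, are bounded. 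Making that uniform estimate precise — or, equivalently, quoting the corresponding smooth-vector characterization from the semigroup theory of \cite{Vo-Khac} or \cite{Blot} — is where the real work of the lemma resides; the remaining identifications of abstract with classical derivatives are then routine applications of Lemma \ref{l2.1} and Theorem \ref{t2.1}.
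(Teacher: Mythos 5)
Your first inclusion $\varrho (AP^{\infty }(\mathbb{R}^{N}))\subseteq \mathcal{D}_{AP}(\mathbb{R}^{N})$ is correct and coincides with the paper's: iterate (\ref{2.2}) to get $D_{\infty }^{\alpha }\varrho (u)=\varrho (D_{y}^{\alpha }u)$. The problem is the converse inclusion, which carries all the content of the lemma and which your proposal does not actually prove: you transport the question to $\mathcal{K}$, set up a mollification $f_{\varphi }=\int \varphi (s)\overline{T}(s)f\,ds$ along the group action, and then state explicitly that the decisive step --- upgrading convergence in the $L^{p}(\mathcal{K})$ (equivalently Besicovitch) topology to uniform convergence, i.e.\ producing a continuous representative --- is ``where the real work of the lemma resides'' and is left to ``a Sobolev-type embedding for the smooth vectors of the translation group on the infinite-dimensional compact group $\mathcal{K}$.'' That is precisely the gap. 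No such embedding is available off the shelf on $\mathcal{K}$ (which is not a Lie group and has no local Euclidean structure), so as written the argument terminates at an acknowledged obstacle rather than at a conclusion. Moreover, the intermediate claims are not all secured: that $f_{\varphi }$ lies in $\mathcal{G}(AP^{\infty }(\mathbb{R}^{N}))$, or even in $\mathcal{C}(\mathcal{K})$, requires the Bochner--Fej\'{e}r step to be carried out with uniform (not merely $L^{p}$) error control, which is again the same unproved point.

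The paper's proof avoids $\mathcal{K}$ altogether and is where you should have looked for the missing idea. Given $u\in \mathcal{D}_{AP}(\mathbb{R}^{N})$, write $u=v+\mathcal{N}$ with $v\in B_{AP}^{\infty }(\mathbb{R}^{N})$; the hypothesis that all $D_{\infty }^{\alpha }u$ lie in $\mathcal{B}_{AP}^{\infty }(\mathbb{R}^{N})$ translates into the statement that all \emph{distributional} derivatives $D_{y}^{\alpha }v$ on $\mathbb{R}^{N}$ again belong to $B_{AP}^{\infty }(\mathbb{R}^{N})\subset L_{\mathrm{loc}}^{p}(\mathbb{R}^{N})$. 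A locally $p$-integrable function on $\mathbb{R}^{N}$ all of whose distributional derivatives are locally $p$-integrable is of class $\mathcal{C}^{\infty }$ by the classical Sobolev embedding on $\mathbb{R}^{N}$ --- a finite-dimensional statement, so no exotic embedding on $\mathcal{K}$ is needed --- and then $v$, being smooth with all derivatives in $B_{AP}^{\infty }\cap L^{\infty }$ arising as uniform limits of translates, is identified with an element of $AP^{\infty }(\mathbb{R}^{N})$, giving $u\in \varrho (AP^{\infty }(\mathbb{R}^{N}))$. In short: the regularity upgrade is performed on the base space $\mathbb{R}^{N}$ via hypoellipticity of the full derivative system, not on the Bohr compactification; redirecting your argument through a representative on $\mathbb{R}^{N}$ is what closes the gap you identified.
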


\begin{proof}
From (\ref{2.2}) we have that, for $u\in \varrho (AP^{\infty }(\mathbb{R}%
^{N}))$ and $\alpha \in \mathbb{N}^{N}$, $D_{\infty }^{\alpha }u=\varrho
(D_{y}^{\alpha }v)$ where $v\in AP^{\infty }(\mathbb{R}^{N})$ is such that $%
u=\varrho (v)$. This leads at once to $\varrho (AP^{\infty }(\mathbb{R}%
^{N}))\subset \mathcal{D}_{AP}(\mathbb{R}^{N})$. Conversely if $u\in 
\mathcal{D}_{AP}(\mathbb{R}^{N})$, then $u\in \mathcal{B}_{AP}^{\infty }(%
\mathbb{R}^{N})$ with $D_{\infty }^{\alpha }u\in \mathcal{B}_{AP}^{\infty }(%
\mathbb{R}^{N})$ for all $\alpha \in \mathbb{N}^{N}$, that is, $u=v+\mathcal{%
N}$ with $v\in B_{AP}^{\infty }(\mathbb{R}^{N})$ being such that $%
D_{y}^{\alpha }v\in B_{AP}^{\infty }(\mathbb{R}^{N})$ for all $\alpha \in 
\mathbb{N}^{N}$, i.e., $v\in AP^{\infty }(\mathbb{R}^{N})$ since, as $v$ is
in $L_{\text{loc}}^{p}(\mathbb{R}^{N})$ with all its distributional
derivatives, $v$ is of class $\mathcal{C}^{\infty }$. Hence $u=v+\mathcal{N}$
with $v\in AP^{\infty }(\mathbb{R}^{N})$, so that $u\in \varrho (AP^{\infty
}(\mathbb{R}^{N}))$.
\end{proof}

From now on, we write $\widehat{u}$ either for $\mathcal{G}(u)$ if $u\in
B_{AP}^{p}(\mathbb{R}^{N})$ or for $\mathcal{G}_{1}(u)$ if $u\in \mathcal{B}%
_{AP}^{p}(\mathbb{R}^{N})$. The following properties are easily verified
(see once again either \cite[Chap. B1]{Vo-Khac} or \cite{Blot}).

\begin{proposition}
\label{p2.2}The following assertions hold.

\begin{itemize}
\item[(i)] $\int_{\mathcal{K}}\partial _{\infty }^{\alpha }\widehat{u}d\beta
=0$ for all $u\in \mathcal{D}_{AP}(\mathbb{R}^{N})$ and $\alpha \in \mathbb{N%
}^{N}$;

\item[(ii)] $\int_{\mathcal{K}}\partial _{i,p}\widehat{u}d\beta =0$ for all $%
u\in \mathcal{D}_{i,p}$ and $1\leq i\leq N$;

\item[(iii)] $D_{i,p}(u\phi )=uD_{i,\infty }\phi +\phi D_{i,p}u$ for all $%
(\phi ,u)\in \mathcal{D}_{AP}(\mathbb{R}^{N})\times \mathcal{D}_{i,p}$ and $%
1\leq i\leq N$.
\end{itemize}
\end{proposition}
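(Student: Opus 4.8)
The plan is to prove (ii) first, deduce (i) from it, and then obtain the Leibniz rule (iii) directly from the difference-quotient definition of the generators. Two structural facts drive everything. The first is that the mean value $M$ is a \emph{continuous} and \emph{translation-invariant} linear form on $\mathcal{B}_{AP}^{p}(\mathbb{R}^{N})$, together with its integral representation $M(v)=\int_{\mathcal{K}}\widehat{v}\,d\beta$; all of this is recorded in property (\textbf{2}). The second is that translation is multiplicative, $\tau _{y}(u\phi )=(\tau _{y}u)(\tau _{y}\phi )$, a relation that passes to the quotient $\mathcal{B}_{AP}^{p}(\mathbb{R}^{N})$ whenever one factor is bounded.

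For (ii) I would fix $u\in \mathcal{D}_{i,p}$. By Part (2) of Lemma \ref{l2.1}, $\partial _{i,p}\widehat{u}=\mathcal{G}_{1}(D_{i,p}u)$, so the integral representation of $M$ gives
\[
\int_{\mathcal{K}}\partial _{i,p}\widehat{u}\,d\beta =\int_{\mathcal{K}}\mathcal{G}_{1}(D_{i,p}u)\,d\beta =M(D_{i,p}u),
\]
and it remains to check $M(D_{i,p}u)=0$. Using $D_{i,p}u=\lim_{t\rightarrow 0}t^{-1}(T(te_{i})u-u)$ in $\mathcal{B}_{AP}^{p}(\mathbb{R}^{N})$, the continuity of $M$, and the invariance $M(T(te_{i})u)=M(u)$ (property (\textbf{2})), I obtain
\[
M(D_{i,p}u)=\lim_{t\rightarrow 0}t^{-1}\big(M(T(te_{i})u)-M(u)\big)=0.
\]
Then (i) reduces to (ii): given $u\in \mathcal{D}_{AP}(\mathbb{R}^{N})$ and $\alpha \in \mathbb{N}^{N}$, fix any $i$ with $\alpha _{i}\geq 1$ and set $\gamma =\alpha -e_{i}$, so that $D_{\infty }^{\alpha }u=D_{i,\infty }(D_{\infty }^{\gamma }u)$. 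Since $u\in \mathcal{D}_{AP}(\mathbb{R}^{N})$, the function $D_{\infty }^{\gamma }u$ lies in $\mathcal{D}_{i,\infty }\subset \mathcal{D}_{i,p}$ and $D_{i,\infty }=D_{i,p}$ there; iterating Lemma \ref{l2.1}(2) and applying (ii) to $D_{\infty }^{\gamma }u$ yields $\int_{\mathcal{K}}\partial _{\infty }^{\alpha }\widehat{u}\,d\beta =M(D_{i,p}(D_{\infty }^{\gamma }u))=0$.

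For (iii), given $(\phi ,u)\in \mathcal{D}_{AP}(\mathbb{R}^{N})\times \mathcal{D}_{i,p}$, I would expand the difference quotient of $u\phi $ using multiplicativity of the translation and an add-and-subtract step:
\[
t^{-1}\big(T(te_{i})(u\phi )-u\phi \big)=(T(te_{i})u)\,\frac{T(te_{i})\phi -\phi }{t}+\frac{T(te_{i})u-u}{t}\,\phi ,
\]
and pass to the limit $t\rightarrow 0$ in $\mathcal{B}_{AP}^{p}(\mathbb{R}^{N})$. Writing $\phi =\varrho (\psi )$ with $\psi \in AP^{\infty }(\mathbb{R}^{N})$ (Lemma \ref{l2.3}), both $\psi $ and $\partial \psi /\partial y_{i}$ are bounded and uniformly continuous, so by the mean value theorem $t^{-1}(T(te_{i})\phi -\phi )\rightarrow D_{i,\infty }\phi $ in the $L^{\infty }(\mathcal{K})$-sense and stays bounded there. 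Combining this with $T(te_{i})u\rightarrow u$ in $\mathcal{B}_{AP}^{p}(\mathbb{R}^{N})$ and the Hölder-type estimate $\Vert vw\Vert _{p}\leq \Vert v\Vert _{p}\,\Vert \widehat{w}\Vert _{L^{\infty }(\mathcal{K})}$ (which via $\mathcal{G}_{1}$ is just Hölder in $L^{p}(\mathcal{K})$, using property (\textbf{1})), the first term converges to $u\,D_{i,\infty }\phi $; since $\phi $ is bounded, the second converges to $\phi \,D_{i,p}u$. The existence of the limit shows $u\phi \in \mathcal{D}_{i,p}$ and gives $D_{i,p}(u\phi )=u\,D_{i,\infty }\phi +\phi \,D_{i,p}u$.

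The hard part, and the reason the hypotheses are asymmetric (namely $\phi \in \mathcal{D}_{AP}(\mathbb{R}^{N})$ but only $u\in \mathcal{D}_{i,p}$), is exactly this passage to the limit in the products: a product of two factors that merely converge in $\mathcal{B}_{AP}^{p}(\mathbb{R}^{N})$ need not converge, so one factor must carry an $L^{\infty }(\mathcal{K})$ bound. The assumption $\phi \in \mathcal{D}_{AP}(\mathbb{R}^{N})$ is precisely what makes $\phi $, $D_{i,\infty }\phi $, and the difference quotients $t^{-1}(T(te_{i})\phi -\phi )$ uniformly bounded in $L^{\infty }(\mathcal{K})$, so that the Hölder estimates apply and the limits separate as claimed.
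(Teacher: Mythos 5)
Your proof is correct, and it is the standard semigroup-theoretic argument: the paper itself gives no proof of Proposition \ref{p2.2}, merely asserting that the properties are ``easily verified'' and deferring to \cite{Vo-Khac} and \cite{Blot}, so your write-up supplies exactly the verification being waved at --- translation invariance plus continuity of $M$ for (ii), reduction of (i) to (ii), and the split difference quotient with $L^{\infty}(\mathcal{K})$ control on the $\phi$-factor for (iii). The only caveat is that assertion (i) as stated includes $\alpha =0$, for which the claim reads $M(u)=0$ and is false in general; your argument (correctly) only covers $\left\vert \alpha \right\vert \geq 1$, which is clearly the intended scope.
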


The formula (iii) in the above proposition leads to the equality 
\begin{equation*}
\int_{\mathcal{K}}\widehat{\phi }\partial _{i,p}\widehat{u}d\beta =-\int_{%
\mathcal{K}}\widehat{u}\partial _{i,\infty }\widehat{\phi }d\beta \ \
\forall (u,\phi )\in \mathcal{D}_{i,p}\times \mathcal{D}_{AP}(\mathbb{R}%
^{N}).
\end{equation*}%
This suggests us to define the concept of distributions on $\mathcal{D}_{AP}(%
\mathbb{R}^{N})$ and of a weak derivative. Before we can do that, let us
endow $\mathcal{D}_{AP}(\mathbb{R}^{N})=\varrho (AP^{\infty }(\mathbb{R}%
^{N}))$ with its natural topology defined by the family of norms $%
N_{n}(u)=\sup_{\left\vert \alpha \right\vert \leq n}\sup_{y\in \mathbb{R}%
^{N}}\left\vert D_{\infty }^{\alpha }u(y)\right\vert $, integers $n\geq 0$.
In this topology, $\mathcal{D}_{AP}(\mathbb{R}^{N})$ is a Fr\'{e}chet space.
We denote by $\mathcal{D}_{AP}^{\prime }(\mathbb{R}^{N})$ the topological
dual of $\mathcal{D}_{AP}(\mathbb{R}^{N})$. We endow it with the strong dual
topology. The elements of $\mathcal{D}_{AP}^{\prime }(\mathbb{R}^{N})$ are
called \textit{the distributions on }$\mathcal{D}_{AP}(\mathbb{R}^{N})$. One
can also define the weak derivative of $f\in \mathcal{D}_{AP}^{\prime }(%
\mathbb{R}^{N})$ as follows: for any $\alpha \in \mathbb{N}^{N}$, $D^{\alpha
}f$ stands for the distribution defined by the formula 
\begin{equation*}
\left\langle D^{\alpha }f,\phi \right\rangle =(-1)^{\left\vert \alpha
\right\vert }\left\langle f,D_{\infty }^{\alpha }\phi \right\rangle \text{\
for all }\phi \in \mathcal{D}_{AP}(\mathbb{R}^{N}).
\end{equation*}%
Since $\mathcal{D}_{AP}(\mathbb{R}^{N})$ is dense in $\mathcal{B}_{AP}^{p}(%
\mathbb{R}^{N})$ ($1\leq p<\infty $), it is immediate that $\mathcal{B}%
_{AP}^{p}(\mathbb{R}^{N})\subset \mathcal{D}_{AP}^{\prime }(\mathbb{R}^{N})$
with continuous embedding, so that one may define the weak derivative of any 
$f\in \mathcal{B}_{AP}^{p}(\mathbb{R}^{N})$, and it verifies the following
functional equation: 
\begin{equation*}
\left\langle D^{\alpha }f,\phi \right\rangle =(-1)^{\left\vert \alpha
\right\vert }\int_{\mathcal{K}}\widehat{f}\partial _{\infty }^{\alpha }%
\widehat{\phi }d\beta \text{\ for all }\phi \in \mathcal{D}_{AP}(\mathbb{R}%
^{N}).
\end{equation*}%
In particular, for $f\in \mathcal{D}_{i,p}$ we have 
\begin{equation*}
-\int_{\mathcal{K}}\widehat{f}\partial _{i,p}\widehat{\phi }d\beta =\int_{%
\mathcal{K}}\widehat{\phi }\partial _{i,p}\widehat{f}d\beta \ \ \forall \phi
\in \mathcal{D}_{AP}(\mathbb{R}^{N}),
\end{equation*}%
so that we may identify $D_{i,p}f$ with $D^{\alpha _{i}}f$, $\alpha
_{i}=(\delta _{ij})_{1\leq j\leq N}$. Conversely, if $f\in \mathcal{B}%
_{AP}^{p}(\mathbb{R}^{N})$ is such that there exists $f_{i}\in \mathcal{B}%
_{AP}^{p}(\mathbb{R}^{N})$ with $\left\langle D^{\alpha _{i}}f,\phi
\right\rangle =-\int_{\mathcal{K}}\widehat{f}_{i}\widehat{\phi }d\beta $ for
all $\phi \in \mathcal{D}_{AP}(\mathbb{R}^{N})$, then $f\in \mathcal{D}%
_{i,p} $ and $D_{i,p}f=f_{i}$. We are therefore justified in saying that $%
\mathcal{B}_{AP}^{1,p}(\mathbb{R}^{N})$ is a Banach space under the norm $%
\left\Vert \cdot \right\Vert _{\mathcal{B}_{AP}^{1,p}(\mathbb{R}^{N})}$. The
same result holds for $W^{1,p}(\mathcal{K})$. Moreover it is a fact that $%
\mathcal{D}_{AP}(\mathbb{R}^{N})$ (resp. $\mathcal{D}(\mathcal{K})$) is a
dense subspace of $\mathcal{B}_{AP}^{1,p}(\mathbb{R}^{N})$ (resp. $W^{1,p}(%
\mathcal{K})$).

We end this section with the definition of the space of correctors. For that
we need the following space: 
\begin{equation*}
\mathcal{B}_{AP}^{1,p}(\mathbb{R}^{N})/\mathbb{C}=\{u\in \mathcal{B}%
_{AP}^{1,p}(\mathbb{R}^{N}):M(u)=0\}.
\end{equation*}%
We endow it with the seminorm 
\begin{equation*}
\left\Vert u\right\Vert _{\#,p}=\left( \sum_{i=1}^{N}\left\Vert
D_{i,p}u\right\Vert _{p}^{p}\right) ^{1/p}\ \ (u\in \mathcal{B}_{AP}^{1,p}(%
\mathbb{R}^{N})/\mathbb{C}).
\end{equation*}%
One can check that this is actually a norm on $\mathcal{B}_{AP}^{1,p}(%
\mathbb{R}^{N})/\mathbb{C}$. With this norm $\mathcal{B}_{AP}^{1,p}(\mathbb{R%
}^{N})/\mathbb{C}$ is a normed vector space which is unfortunately not
complete. We denote by $\mathcal{B}_{\#AP}^{1,p}(\mathbb{R}^{N})$ its
completion with respect to the above norm and by $J_{p}$ the canonical
embedding of $\mathcal{B}_{AP}^{1,p}(\mathbb{R}^{N})/\mathbb{C}$ into $%
\mathcal{B}_{\#AP}^{1,p}(\mathbb{R}^{N})$. The following properties are due
to the theory of completion of uniform spaces (see \cite{Bourbaki}):

\begin{itemize}
\item[(P$_{1}$)] The gradient operator $D_{p}=(D_{1,p},...,D_{N,p}):\mathcal{%
B}_{AP}^{1,p}(\mathbb{R}^{N})/\mathbb{C}\rightarrow (\mathcal{B}_{AP}^{p}(%
\mathbb{R}^{N}))^{N}$ extends by continuity to a unique mapping $\overline{D}%
_{p}:\mathcal{B}_{\#AP}^{1,p}(\mathbb{R}^{N})\rightarrow (\mathcal{B}%
_{AP}^{p}(\mathbb{R}^{N}))^{N}$ with the properties 
\begin{equation*}
D_{i,p}=\overline{D}_{i,p}\circ J_{p}
\end{equation*}%
and 
\begin{equation*}
\left\Vert u\right\Vert _{\#,p}=\left( \sum_{i=1}^{N}\left\Vert \overline{D}%
_{i,p}u\right\Vert _{p}^{p}\right) ^{1/p}\ \ \text{for }u\in \mathcal{B}%
_{\#AP}^{1,p}(\mathbb{R}^{N}).
\end{equation*}

\item[(P$_{2}$)] The space $J_{p}(\mathcal{B}_{AP}^{1,p}(\mathbb{R}^{N})/%
\mathbb{C})$ (and hence $J_{p}(\mathcal{D}_{AP}(\mathbb{R}^{N})/\mathbb{C})$%
) is dense in $\mathcal{B}_{\#AP}^{1,p}(\mathbb{R}^{N})$.
\end{itemize}

\noindent Moreover the mapping $\overline{D}_{p}$ is an isometric embedding
of $\mathcal{B}_{\#AP}^{1,p}(\mathbb{R}^{N})$ onto a closed subspace of $(%
\mathcal{B}_{AP}^{p}(\mathbb{R}^{N}))^{N}$, so that $\mathcal{B}%
_{\#AP}^{1,p}(\mathbb{R}^{N})$ is a reflexive Banach space. By duality we
define the divergence operator $\Div_{p^{\prime }}:(\mathcal{B}_{AP}^{p}(%
\mathbb{R}^{N}))^{N}\rightarrow (\mathcal{B}_{\#AP}^{1,p}(\mathbb{R}%
^{N}))^{\prime }$ ($p^{\prime }=p/(p-1)$) by 
\begin{equation}
\left\langle \text{div}_{p^{\prime }}u,v\right\rangle =-\left\langle u,%
\overline{D}_{p}v\right\rangle \text{\ for }v\in \mathcal{B}_{\#AP}^{1,p}(%
\mathbb{R}^{N})\text{ and }u=(u_{i})\in (\mathcal{B}_{AP}^{p^{\prime }}(%
\mathbb{R}^{N}))^{N}\text{,}  \label{2.6}
\end{equation}%
where $\left\langle u,\overline{D}_{p}v\right\rangle =\sum_{i=1}^{N}\int_{%
\mathcal{K}}\widehat{u}_{i}\partial _{i,p}\widehat{v}d\beta $. The operator
div$_{p^{\prime }}$ just defined extends the natural divergence operator
defined in $\mathcal{D}_{AP}(\mathbb{R}^{N})$ since $D_{i,p}f=\overline{D}%
_{i,p}(J_{p}f)$ for all $f\in \mathcal{D}_{AP}(\mathbb{R}^{N})$.

Now if in (\ref{2.6}) we take $u=D_{p^{\prime }}w$ with $w\in \mathcal{B}%
_{AP}^{p^{\prime }}(\mathbb{R}^{N})$ being such that $D_{p^{\prime }}w\in (%
\mathcal{B}_{AP}^{p^{\prime }}(\mathbb{R}^{N}))^{N}$ then this allows us to
define the Laplacian operator on $\mathcal{B}_{AP}^{p^{\prime }}(\mathbb{R}%
^{N})$, denoted here by $\Delta _{p^{\prime }}$, as follows: 
\begin{equation}
\left\langle \Delta _{p^{\prime }}w,v\right\rangle =\left\langle \text{div}%
_{p^{\prime }}(D_{p^{\prime }}w),v\right\rangle =-\left\langle D_{p^{\prime
}}w,\overline{D}_{p}v\right\rangle \text{\ for all }v\in \mathcal{B}%
_{\#AP}^{1,p}(\mathbb{R}^{N}).  \label{2.7}
\end{equation}%
If in addition $v=J_{p}(\phi )$ with $\phi \in \mathcal{D}_{AP}(\mathbb{R}%
^{N})/\mathbb{C}$ then $\left\langle \Delta _{p^{\prime }}w,J_{p}(\phi
)\right\rangle =-\left\langle D_{p^{\prime }}w,D_{p}\phi \right\rangle $, so
that, for $p=2$, we get 
\begin{equation*}
\left\langle \Delta _{2}w,J_{2}(\phi )\right\rangle =\left\langle w,\Delta
_{2}\phi \right\rangle \text{\ for all }w\in \mathcal{B}_{AP}^{2}(\mathbb{R}%
^{N})\text{ and }\phi \in \mathcal{D}_{AP}(\mathbb{R}^{N})/\mathbb{C}\text{.}
\end{equation*}

The following result is also immediate.

\begin{proposition}
\label{p2.5}For $u\in AP^{\infty }(\mathbb{R}^{N})$ we have 
\begin{equation*}
\Delta _{p}\varrho (u)=\varrho (\Delta _{y}u)
\end{equation*}%
where $\Delta _{y}$ stands for the usual Laplacian operator on $\mathbb{R}%
_{y}^{N}$.
\end{proposition}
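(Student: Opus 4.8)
The plan is to reduce the statement to the case of smooth generators and a single integration by parts over the compact group $\mathcal{K}$. First I would record that, since $u\in AP^{\infty}(\mathbb{R}^{N})$, Lemma \ref{l2.3} gives $\varrho(u)\in\mathcal{D}_{AP}(\mathbb{R}^{N})$, and each $\partial u/\partial y_{i}$ again lies in $AP^{\infty}(\mathbb{R}^{N})$; hence by Lemma \ref{l2.1}(1) applied coordinatewise, $D_{i,p}\varrho(u)=\varrho(\partial u/\partial y_{i})$, so that $D_{p}\varrho(u)=\varrho(D_{y}u)\in(\mathcal{B}_{AP}^{p}(\mathbb{R}^{N}))^{N}$ with every component still in $\mathcal{D}_{AP}(\mathbb{R}^{N})$. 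In particular $\varrho(u)\in\mathcal{B}_{AP}^{p}(\mathbb{R}^{N})$, so that $\Delta_{p}\varrho(u)$ is well defined by the $p$-analogue of \eqref{2.7} as a continuous functional on $\mathcal{B}_{\#AP}^{1,p'}(\mathbb{R}^{N})$.

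Next I would test $\Delta_{p}\varrho(u)$ against the dense subspace $J_{p'}(\mathcal{D}_{AP}(\mathbb{R}^{N})/\mathbb{C})$ of $\mathcal{B}_{\#AP}^{1,p'}(\mathbb{R}^{N})$ provided by property (P$_{2}$). For $\phi\in\mathcal{D}_{AP}(\mathbb{R}^{N})/\mathbb{C}$ the definition of $\Delta_{p}$ gives $\langle\Delta_{p}\varrho(u),J_{p'}(\phi)\rangle=-\langle D_{p}\varrho(u),D_{p'}\phi\rangle=-\sum_{i=1}^{N}\int_{\mathcal{K}}\partial_{i,p}\widehat{u}\,\partial_{i,p'}\widehat{\phi}\,d\beta$, where I have used Part (2) of Lemma \ref{l2.1} to identify $\widehat{D_{i,p}\varrho(u)}=\partial_{i,p}\widehat{u}$.

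The key step is the integration by parts over $\mathcal{K}$. Setting $h_{i}=\varrho(\partial u/\partial y_{i})\in\mathcal{D}_{AP}(\mathbb{R}^{N})$, so that $\widehat{h}_{i}=\partial_{i,p}\widehat{u}\in\mathcal{D}(\mathcal{K})$, I would apply Proposition \ref{p2.2}: the Leibniz rule (iii) together with $\int_{\mathcal{K}}\partial_{i,\infty}(\widehat{h}_{i}\widehat{\phi})\,d\beta=0$ from (i) yields $\int_{\mathcal{K}}\widehat{h}_{i}\,\partial_{i,\infty}\widehat{\phi}\,d\beta=-\int_{\mathcal{K}}\widehat{\phi}\,\partial_{i,\infty}\widehat{h}_{i}\,d\beta$. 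Because $\phi$ and $h_{i}$ are smooth one has $\partial_{i,p'}\widehat{\phi}=\partial_{i,\infty}\widehat{\phi}$, while Lemma \ref{l2.1} gives $\partial_{i,\infty}\widehat{h}_{i}=\widehat{\varrho(\partial^{2}u/\partial y_{i}^{2})}$. Summing over $i$ then produces $\langle\Delta_{p}\varrho(u),J_{p'}(\phi)\rangle=\int_{\mathcal{K}}\widehat{\varrho(\Delta_{y}u)}\,\widehat{\phi}\,d\beta=\langle\varrho(\Delta_{y}u),J_{p'}(\phi)\rangle$, which is precisely the pairing of the $\mathcal{B}_{AP}^{p}(\mathbb{R}^{N})$-function $\varrho(\Delta_{y}u)$ with $J_{p'}(\phi)$.

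Finally, since this identity holds on the dense set $J_{p'}(\mathcal{D}_{AP}(\mathbb{R}^{N})/\mathbb{C})$ and $\Delta_{p}\varrho(u)$ is by construction continuous on $\mathcal{B}_{\#AP}^{1,p'}(\mathbb{R}^{N})$, the two functionals coincide, giving $\Delta_{p}\varrho(u)=\varrho(\Delta_{y}u)$. I expect the only delicate points to be bookkeeping---keeping the conjugate exponents $p$ and $p'$ straight in the duality \eqref{2.7}, and justifying each derivative interchange on $\mathcal{K}$---all of which is legitimate precisely because the hypothesis $u\in AP^{\infty}(\mathbb{R}^{N})$ forces $\varrho(u)$ and all of its derivatives into $\mathcal{D}_{AP}(\mathbb{R}^{N})$, the exact setting where Proposition \ref{p2.2} applies.
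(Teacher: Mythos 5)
Your argument is correct and is precisely the computation the paper has in mind: the paper states this proposition without proof (calling it ``immediate''), and the intended justification is exactly your combination of the duality definition \eqref{2.7} of $\Delta_{p}$, Lemma \ref{l2.1} to commute $\varrho$ with derivatives, the integration-by-parts identity following Proposition \ref{p2.2}, and density of $J_{p'}(\mathcal{D}_{AP}(\mathbb{R}^{N})/\mathbb{C})$ from (P$_{2}$). No discrepancy with the paper's approach.
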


We end this subsection with some notations. Let $f\in \mathcal{B}_{AP}^{p}(%
\mathbb{R}^{N})$. We know that $D^{\alpha _{i}}f$ exists (in the sense of
distributions) and that $D^{\alpha _{i}}f=D_{i,p}f$ if $f\in \mathcal{D}%
_{i,p}$. So we can drop the subscript $p$ and therefore denote $D_{i,p}$
(resp. $\partial _{i,p}$) by $\overline{\partial }/\partial y_{i}$ (resp. $%
\partial _{i}$). Thus, $\overline{D}_{y}$ will stand for the gradient
operator $(\overline{\partial }/\partial y_{i})_{1\leq i\leq N}$ and $%
\overline{\Div}_{y}$ for the divergence operator $\Div_{p}$. We will also
denote the operator $\overline{D}_{i,p}$ by $\overline{\partial }/\partial
y_{i}$. Since $J_{p}$ is an embedding, this allows us to view $\mathcal{B}%
_{AP}^{1,p}(\mathbb{R}^{N})/\mathbb{C}$ (and hence $\mathcal{D}_{AP}(\mathbb{%
R}^{N})/\mathbb{C}$) as a dense subspace of $\mathcal{B}_{\#AP}^{1,p}(%
\mathbb{R}^{N})$. $D_{i,p}$ will therefore be seen as the restriction of $%
\overline{D}_{i,p}$ to $\mathcal{B}_{AP}^{1,p}(\mathbb{R}^{N})/\mathbb{C}$.
Thus we will henceforth omit $J_{p}$ in the notation if it is understood
from the context and there is no risk of confusion. This will lead to the
notation $\overline{D}_{p}=\overline{D}_{y}=(\overline{\partial }/\partial
y_{i})_{1\leq i\leq N}$ and $\partial _{p}=\partial =(\partial _{i})_{1\leq
i\leq N}$. Finally, we will denote the Laplacian operator on $\mathcal{B}%
_{AP}^{p}(\mathbb{R}^{N})$ by $\overline{\Delta }_{y} $.

\section{The $\Sigma $-convergence method for stochastic processes}

In this section we define an appropriate notion of the concept of $\Sigma $%
-convergence adapted to our situation. It is to be noted that it is built
according to the original notion introduced by Nguetseng \cite{Hom1}. Here
we adapt it to systems involving random behavior. In all that follows $Q$ is
an open subset of $\mathbb{\mathbb{R}}^{N}$ (integer $N\geq 1$), $T$ is a
positive real number and $Q_{T}=Q\times (0,T)$. Let $(\Omega ,\mathcal{F},%
\mathbb{P})$ be a probability space. The expectation on $(\Omega ,\mathcal{F}%
,\mathbb{P})$ will throughout be denoted by $\mathbb{E}$. Let us first
recall the definition of the Banach space of all bounded $\mathcal{F}$%
-measurable functions. Denoting by $F(\Omega )$ the Banach space of all
bounded functions $f:\Omega \rightarrow \mathbb{R}$ (with the sup norm), we
define $B(\Omega )$ as the closure in $F(\Omega )$ of the vector space $%
H(\Omega )$ consisting of all finite linear combinations of the
characteristic functions $1_{X}$ of sets $X\in \mathcal{F}$. Since $\mathcal{%
F}$ is an $\sigma $-algebra, $B(\Omega )$ is the Banach space of all bounded 
$\mathcal{F}$-measurable functions. Likewise we define the space $B(\Omega
;Z)$ of all bounded $(\mathcal{F},B_{Z})$-measurable functions $f:\Omega
\rightarrow Z$, where $Z$ is a Banach space endowed with the $\sigma $%
-algebra of Borelians $B_{Z}$. It is a fact that the tensor product $%
B(\Omega )\otimes Z$ is a dense subspace of $B(\Omega ;Z)$.

This being so, let $A_{y}=AP(\mathbb{R}_{y}^{N})$ and $A_{\tau }=AP(\mathbb{R%
}_{\tau })$. We know that $A=AP(\mathbb{R}_{y,\tau }^{N+1})$ is the closure
in $\mathcal{B}(\mathbb{R}_{y,\tau }^{N+1})$ of the tensor product $%
A_{y}\otimes A_{\tau }$. We denote by $\mathcal{K}_{y}$ (resp. $\mathcal{K}%
_{\tau }$, $\mathcal{K}$) the spectrum of $A_{y}$ (resp. $A_{\tau }$, $A$).
The same letter $\mathcal{G}$ will denote the Gelfand transformation on $%
A_{y}$, $A_{\tau }$ and $A$, as well. Points in $\mathcal{K}_{y}$ (resp. $%
\mathcal{K}_{\tau }$) are denoted by $s$ (resp. $s_{0}$). The Haar measure
on the compact group $\mathcal{K}_{y}$ (resp. $\mathcal{K}_{\tau }$) is
denoted by $\beta _{y}$ (resp. $\beta _{\tau }$). We have $\mathcal{K}=%
\mathcal{K}_{y}\times \mathcal{K}_{\tau }$ (Cartesian product) and the Haar
measure on $\mathcal{K}$ is precisely the product measure $\beta =\beta
_{y}\otimes \beta _{\tau }$; the last equality follows in an obvious way by
the density of $A_{y}\otimes A_{\tau }$ in $A$ and by the Fubini's theorem.
Points in $\Omega $ are as usual denoted by $\omega $.

Unless otherwise stated, random variables will always be considered on the
probability space $(\Omega ,\mathcal{F},\mathbb{P})$. Finally, the letter $E$
will throughout denote exclusively an ordinary sequence $(\varepsilon
_{n})_{n\in \mathbb{N}}$ with $0<\varepsilon _{n}\leq 1$ and $\varepsilon
_{n}\rightarrow 0$ as $n\rightarrow \infty $. In what follow, we use the
same notation as in the preceding section.

\begin{definition}
\label{d3.1}\emph{A sequence }$(u_{\varepsilon })_{\varepsilon >0}$\emph{\
of }$L^{p}(Q_{T})$\emph{-valued random variables (}$1\leq p<\infty $\emph{)
is said to }weakly $\Sigma $-converge\emph{\ in }$L^{p}(Q_{T}\times \Omega )$%
\emph{\ to some }$L^{p}(Q_{T};\mathcal{B}_{AP}^{p}(\mathbb{R}_{y,\tau
}^{N+1}))$\emph{-valued random variable }$u_{0}$\emph{\ if as }$\varepsilon
\rightarrow 0$\emph{, we have } 
\begin{equation}
\begin{array}{l}
\int_{Q_{T}\times \Omega }u_{\varepsilon }(x,t,\omega )f\left( x,t,\frac{x}{%
\varepsilon },\frac{t}{\varepsilon ^{2}},\omega \right) dxdtd\mathbb{P} \\ 
\ \ \ \ \ \ \ \ \ \ \ \ \ \ \rightarrow \iint_{Q_{T}\times \Omega \times 
\mathcal{K}}\widehat{u}_{0}(x,t,s,s_{0},\omega )\widehat{f}%
(x,t,s,s_{0},\omega )dxdtd\mathbb{P}d\beta%
\end{array}
\label{3.1}
\end{equation}%
\emph{for every }$f\in B(\Omega ;L^{p^{\prime }}(Q_{T};A))$\emph{\ (}$%
1/p^{\prime }=1-1/p$\emph{), where }$\widehat{u}_{0}=\mathcal{G}_{1}\circ
u_{0}$\emph{\ and }$\widehat{f}=\mathcal{G}_{1}\circ (\varrho \circ f)=%
\mathcal{G}\circ f$\emph{. We express this by writing} $u_{\varepsilon
}\rightarrow u_{0}$ in $L^{p}(Q_{T}\times \Omega )$-weak $\Sigma $.
\end{definition}

\begin{remark}
\label{r3.0}\emph{The above weak }$\Sigma $\emph{-convergence in }$%
L^{p}(Q_{T}\times \Omega )$\emph{\ implies the weak convergence in }$%
L^{p}(Q_{T}\times \Omega )$\emph{. One can also see from the density of }$%
B(\Omega )$\emph{\ in }$L^{p^{\prime }}(\Omega )$\emph{\ (in the case }$%
1<p<\infty $\emph{) that (\ref{3.1}) obviously holds for }$f\in L^{p^{\prime
}}(\Omega ;L^{p^{\prime }}(Q_{T};A))$\emph{. One can show as in the usual
framework of }$\Sigma $\emph{-convergence method \cite{Hom1} that each }$%
f\in L^{p}(\Omega ;L^{p}(Q_{T};A))$\emph{\ weakly }$\Sigma $\emph{-converges
to }$\varrho \circ f$\emph{\ (that we can identified here with its
representative }$f$\emph{).}
\end{remark}

As said in the introduction, in the case $p=2$, our convergence method is
formally motivated by the following fact: using the chaos decomposition of $%
u_{\varepsilon }$ and $f$ we get $u_{\varepsilon }(x,t,\omega
)=\sum_{j=1}^{\infty }u_{\varepsilon ,j}(x,t)\Phi _{j}(\omega )$ and $%
f(x,t,y,\tau ,\omega )=\sum_{k=1}^{\infty }f_{k}(x,t,y,\tau )\Phi
_{k}(\omega )$ where $u_{\varepsilon ,j}\in L^{2}(Q_{T})$ and $f_{k}\in
L^{2}(Q_{T};A)$, so that 
\begin{equation*}
\int_{Q_{T}\times \Omega }u_{\varepsilon }(x,t,\omega )f\left( x,t,\frac{x}{%
\varepsilon },\frac{t}{\varepsilon ^{2}},\omega \right) dxdtd\mathbb{P}
\end{equation*}%
can be formally written as 
\begin{equation*}
\sum_{j,k}\int_{\Omega }\Phi _{j}(\omega )\Phi _{k}(\omega )d\mathbb{P}%
\int_{Q_{T}}u_{\varepsilon ,j}(x,t)f_{k}\left( x,t,\frac{x}{\varepsilon },%
\frac{t}{\varepsilon ^{2}}\right) dxdt,
\end{equation*}%
and by the usual $\Sigma $-convergence method (see \cite{NA, Hom1}), as $%
\varepsilon \rightarrow 0$, 
\begin{equation*}
\int_{Q_{T}}u_{\varepsilon ,j}(x,t)f_{k}\left( x,t,\frac{x}{\varepsilon },%
\frac{t}{\varepsilon ^{2}}\right) dxdt\rightarrow \iint_{Q_{T}\times 
\mathcal{K}}\widehat{u}_{0,j}(x,t,s,s_{0})\widehat{f}_{k}\left(
x,t,s,s_{0}\right) dxdtd\beta .
\end{equation*}%
Hence, by setting 
\begin{equation*}
\widehat{u}_{0}(x,t,s,s_{0},\omega )=\sum_{j=1}^{\infty }\widehat{u}%
_{0,j}(x,t,s,s_{0})\Phi _{j}(\omega );\;\widehat{f}\left( x,t,s,s_{0},\omega
\right) =\sum_{k=1}^{\infty }\widehat{f}_{k}\left( x,t,s,s_{0}\right) \Phi
_{k}(\omega )
\end{equation*}%
we get (\ref{3.1}). This is of course what formally motivated our definition.

The following result holds.

\begin{theorem}
\label{t3.1}Let $1<p<\infty $. Let $(u_{\varepsilon })_{\varepsilon \in E}$
be a sequence of $L^{p}(Q_{T})$-valued random variables verifying the
following boundedness condition: 
\begin{equation*}
\sup_{\varepsilon \in E}\mathbb{E}\left\| u_{\varepsilon }\right\|
_{L^{p}(Q_{T})}^{p}<\infty .
\end{equation*}%
Then there exists a subsequence $E^{\prime }$ from $E$ such that the
sequence $(u_{\varepsilon })_{\varepsilon \in E^{\prime }}$ is weakly $%
\Sigma $-convergent in $L^{p}(Q_{T}\times \Omega )$.
\end{theorem}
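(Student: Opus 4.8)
The plan is to run the classical duality argument underlying the fundamental theorem of $\Sigma$-convergence, carrying the probability variable $\omega$ along as a spectator. The hypothesis is exactly the statement that $(u_\varepsilon)_{\varepsilon\in E}$ is bounded in the reflexive space $L^p(Q_T\times\Omega)$, since $\mathbb E\|u_\varepsilon\|_{L^p(Q_T)}^p=\|u_\varepsilon\|_{L^p(Q_T\times\Omega)}^p$. For each $\varepsilon\in E$ I would introduce, on the test space $\mathcal X:=B(\Omega;L^{p'}(Q_T;A))$, the linear functional
\[
L_\varepsilon(f)=\int_{Q_T\times\Omega}u_\varepsilon(x,t,\omega)\,f\Big(x,t,\tfrac{x}{\varepsilon},\tfrac{t}{\varepsilon^2},\omega\Big)\,dx\,dt\,d\mathbb P ,
\]
and the objective is to extract a subsequence $E'\subset E$ along which $L_\varepsilon(f)$ converges, for every $f$, to a quantity of the form $\iint_{Q_T\times\Omega\times\mathcal K}\widehat u_0\,\widehat f\,dx\,dt\,d\mathbb P\,d\beta$ for a suitable $u_0$; this is precisely (\ref{3.1}).

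Next I would establish the basic estimate. Writing $f^\varepsilon(x,t,\omega)=f(x,t,x/\varepsilon,t/\varepsilon^2,\omega)$ and applying H\"older's inequality in $L^p(Q_T\times\Omega)$ gives $|L_\varepsilon(f)|\le C\,\|f^\varepsilon\|_{L^{p'}(Q_T\times\Omega)}$ with $C=\sup_{\varepsilon\in E}\big(\mathbb E\|u_\varepsilon\|_{L^p(Q_T)}^p\big)^{1/p}$. The heart of this step is the norm convergence
\[
\|f^\varepsilon\|_{L^{p'}(Q_T\times\Omega)}\longrightarrow\|\widehat f\|_{L^{p'}(Q_T\times\Omega\times\mathcal K)}\quad(\varepsilon\to0),
\]
which I would get by applying the mean value property to the almost periodic function $|f(x,t,\cdot,\cdot,\omega)|^{p'}$ --- using the integral representation $M(\cdot)=\int_{\mathcal K}\mathcal G(\cdot)\,d\beta$ of Theorem \ref{t2.1} together with the isometry of properties (1)--(2) of Section 2 --- and then integrating over $\Omega$ by dominated convergence, which is legitimate because $f\in B(\Omega;\cdot)$ is bounded and $\mathbb P(\Omega)=1$. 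Approximating a general $f$ by elements of $\mathcal C(\overline{Q_T})\otimes(A_y\otimes A_\tau)$ extends the convergence to all of $\mathcal X$, whence $\limsup_\varepsilon|L_\varepsilon(f)|\le C\,\|\widehat f\|_{L^{p'}(Q_T\times\Omega\times\mathcal K)}$.

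Granting an extraction $E'$ along which $L(f):=\lim_{\varepsilon\in E'}L_\varepsilon(f)$ exists for every $f$, the conclusion follows by duality. The functional $L$ is linear and, by the previous estimate, obeys $|L(f)|\le C\,\|\widehat f\|_{L^{p'}(Q_T\times\Omega\times\mathcal K)}$; in particular $L$ vanishes whenever $\widehat f=0$, so it factors through the Gelfand map $f\mapsto\widehat f$ and extends by continuity to a bounded linear form on the closure of $\{\widehat f:f\in\mathcal X\}$ in $L^{p'}(Q_T\times\Omega\times\mathcal K)$. Because $1<p<\infty$, the space $L^{p'}(Q_T\times\Omega\times\mathcal K)$ is reflexive, so Hahn--Banach extension and Riesz representation furnish a unique $\widehat u_0\in L^p(Q_T\times\Omega\times\mathcal K)$ with $L(f)=\iint_{Q_T\times\Omega\times\mathcal K}\widehat u_0\,\widehat f$. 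Setting $u_0=\mathcal G_1^{-1}(\widehat u_0)$ yields the desired $L^p(Q_T;\mathcal B_{AP}^p(\mathbb R^{N+1}_{y,\tau}))$-valued random variable, and $u_\varepsilon\to u_0$ in $L^p(Q_T\times\Omega)$-weak $\Sigma$ along $E'$; consistency with the ordinary weak limit is the content of Remark \ref{r3.0}.

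I expect the extraction of $E'$ to be the genuine obstacle. Regarding $(L_\varepsilon)$ as a bounded sequence in $\mathcal X^{*}$, one is tempted to invoke weak-$*$ compactness; the difficulty is that $\mathcal X=B(\Omega;L^{p'}(Q_T;A))$ is not separable --- neither $B(\Omega)$ nor $A=AP(\mathbb R^{N+1})$ is --- so bounded subsets of $\mathcal X^{*}$ carry a non-metrizable weak-$*$ topology and a naive diagonal extraction is unavailable. I would attack this by first extracting, via a diagonal argument, a subsequence for which $L_\varepsilon$ converges on a fixed countable family of test functions drawn from a separable subalgebra (exploiting the tensor density $A_y\otimes A_\tau\subset A$), and then using the uniform bound through the almost-Cauchy inequality $|L_\varepsilon(f)-L_{\varepsilon'}(f)|\le|L_\varepsilon(f_n)-L_{\varepsilon'}(f_n)|+2C\,\|\widehat f-\widehat f_n\|$ to propagate convergence; making this propagation reach \emph{every} $f\in\mathcal X$ despite the non-separability of the ambient $L^{p'}$-space is exactly the delicate point to be handled with care.
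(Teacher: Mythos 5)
Your first three paragraphs faithfully reconstruct the skeleton that any proof of Theorem \ref{t3.1} must have: the identification $\mathbb{E}\left\Vert u_{\varepsilon }\right\Vert _{L^{p}(Q_{T})}^{p}=\left\Vert u_{\varepsilon }\right\Vert _{L^{p}(Q_{T}\times \Omega )}^{p}$, the H\"{o}lder bound, the asymptotic norm identity $\left\Vert f^{\varepsilon }\right\Vert _{L^{p^{\prime }}(Q_{T}\times \Omega )}\rightarrow \left\Vert \widehat{f}\right\Vert _{L^{p^{\prime }}(Q_{T}\times \Omega \times \mathcal{K})}$ obtained from the mean value and Theorem \ref{t2.1}, and the representation of the limit functional by some $\widehat{u}_{0}\in L^{p}(Q_{T}\times \Omega \times \mathcal{K})$ via density of the Gelfand image and duality. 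Be aware, though, that the paper's own proof is a single sentence: it applies the abstract compactness theorem \cite[Theorem 3.1]{CMP} with $Y=L^{p^{\prime }}(Q_{T}\times \Omega \times \mathcal{K})$ and $X=B(\Omega ;L^{p^{\prime }}(Q_{T};\mathcal{C}(\mathcal{K})))$, so all of the work you describe -- including the step you flag as delicate -- is hidden inside that citation.

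The genuine gap is exactly where you locate it, and the repair you sketch cannot close it. Your propagation inequality $|L_{\varepsilon }(f)-L_{\varepsilon ^{\prime }}(f)|\leq |L_{\varepsilon }(f_{n})-L_{\varepsilon ^{\prime }}(f_{n})|+2C\Vert \widehat{f}-\widehat{f}_{n}\Vert $ only reaches those $f$ whose Gelfand transforms lie in the closed span of the countable family $\{\widehat{f}_{n}\}$, so you would need $\{\widehat{f}_{n}\}$ to be dense in the whole Gelfand image of the test space for the $L^{p^{\prime }}(Q_{T}\times \Omega \times \mathcal{K})$-norm. No such countable family exists: $A=AP(\mathbb{R}_{y,\tau }^{N+1})$ contains the uncountable family of characters $z\mapsto e^{i\lambda \cdot z}$, $\lambda \in \mathbb{R}^{N+1}$, whose images under $\mathcal{G}$ are mutually orthogonal in $L^{2}(\mathcal{K},\beta )$ and uniformly separated in every $L^{p^{\prime }}(\mathcal{K},\beta )$ (the distance between two of them equals the fixed constant $\bigl(\frac{1}{2\pi }\int_{0}^{2\pi }|e^{i\theta }-1|^{p^{\prime }}d\theta \bigr)^{1/p^{\prime }}$), so the closure of the Gelfand image is non-separable. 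This is precisely what distinguishes the Besicovitch almost periodic setting from Nguetseng's separable $H$-algebras, where your diagonal-plus-density argument is the standard proof. The compactness theorem of \cite{CMP} (see also \cite{Casado, NA}) is designed for this non-separable situation and rests on the reflexivity of $Y$ -- weak sequential compactness of bounded sets in the dual, after a Hahn--Banach extension of the asymptotically bounded functionals -- rather than on any countable dense family. As written, your argument proves convergence of $L_{\varepsilon }(f)$ only on a proper separable subspace of test functions, and you acknowledge the extension to all $f$ is unresolved; to complete the proof you must either reproduce the argument of \cite[Theorem 3.1]{CMP} or cite it, as the paper does.
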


\begin{proof}
Applying \cite[Theorem 3.1]{CMP} with $Y=L^{p^{\prime }}(Q_{T}\times \Omega
\times \mathcal{K})$ and $X=B(\Omega ;L^{p^{\prime }}(Q_{T};\mathcal{C}(%
\mathcal{K})))=\mathcal{G}(B(\Omega ;L^{p^{\prime }}(Q_{T};A)))$ we are led
at once to the result.
\end{proof}

The next result is of capital interest in the homogenization process.

\begin{theorem}
\label{t3.2}Let $1<p<\infty $. Let $(u_{\varepsilon })_{\varepsilon \in E}$
be a sequence of $L^{p}(0,T;W_{0}^{1,p}(Q))$-valued random variables which
satisfies the following estimate: 
\begin{equation*}
\sup_{\varepsilon \in E}\mathbb{E}\left\| u_{\varepsilon }\right\|
_{L^{p}(0,T;W_{0}^{1,p}(Q))}^{p}<\infty .
\end{equation*}%
Then there exist a subsequence $E^{\prime }$ of $E$, an $%
L^{p}(0,T;W_{0}^{1,p}(Q))$-valued random variable $u_{0}$ and an $%
L^{p}(Q_{T};\mathcal{B}_{AP}^{p}(\mathbb{R}_{\tau };\mathcal{B}_{\#AP}^{1,p}(%
\mathbb{R}_{y}^{N})))$-valued random variable $u_{1}$ such that, as $%
E^{\prime }\ni \varepsilon \rightarrow 0$, 
\begin{equation*}
u_{\varepsilon }\rightarrow u_{0}\ \text{in }L^{p}(Q_{T}\times \Omega )\text{%
-weak;}
\end{equation*}%
\begin{equation}
\frac{\partial u_{\varepsilon }}{\partial x_{i}}\rightarrow \frac{\partial
u_{0}}{\partial x_{i}}+\frac{\overline{\partial }u_{1}}{\partial y_{i}}\text{%
\ in }L^{p}(Q_{T}\times \Omega )\text{-weak }\Sigma \text{, }1\leq i\leq N.
\label{3.2}
\end{equation}
\end{theorem}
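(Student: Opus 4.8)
The plan is to produce the two limits by invoking the compactness theorem (Theorem~\ref{t3.1}) and then to identify the structure of the weak $\Sigma$-limit of the gradient by a duality argument against fields that are solenoidal in the fast spatial variable. First I would note that the bound $\sup_{\varepsilon}\mathbb{E}\|u_{\varepsilon}\|_{L^{p}(0,T;W_{0}^{1,p}(Q))}^{p}<\infty$ controls simultaneously $\mathbb{E}\|u_{\varepsilon}\|_{L^{p}(Q_{T})}^{p}$ and each $\mathbb{E}\|\partial u_{\varepsilon}/\partial x_{i}\|_{L^{p}(Q_{T})}^{p}$. Applying Theorem~\ref{t3.1} to $u_{\varepsilon}$ and to the $N$ sequences $\partial u_{\varepsilon}/\partial x_{i}$ and passing to a common (diagonal) subsequence $E^{\prime}$, I obtain an $L^{p}(Q_{T};\mathcal{B}_{AP}^{p}(\mathbb{R}_{y,\tau}^{N+1}))$-valued random variable $\mathbf{u}$ and such random variables $v_{1},\dots,v_{N}$ with $u_{\varepsilon}\to\mathbf{u}$ and $\partial u_{\varepsilon}/\partial x_{i}\to v_{i}$ weakly $\Sigma$. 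By Remark~\ref{r3.0} these are in particular weak $L^{p}(Q_{T}\times\Omega)$-limits, so $u_{\varepsilon}\to u_{0}:=M_{y,\tau}(\mathbf{u})$ and $\partial u_{\varepsilon}/\partial x_{i}\to M_{y,\tau}(v_{i})$ weakly; since $L^{p}(0,T;W_{0}^{1,p}(Q))$ is weakly closed, $u_{0}$ lies there and $M_{y,\tau}(v_{i})=\partial u_{0}/\partial x_{i}$.

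Next I would show that $\mathbf{u}$ carries no fast spatial oscillation. Testing $\partial u_{\varepsilon}/\partial x_{j}$ against $\varepsilon\,\theta(x,t,\omega)\psi(x/\varepsilon,t/\varepsilon^{2})$ with $\theta\in B(\Omega;\mathcal{C}_{0}^{\infty}(Q_{T}))$ and $\psi\in\mathcal{D}_{AP}(\mathbb{R}_{y}^{N})\otimes A_{\tau}$, integrating by parts in $x$ and letting $\varepsilon\to0$ (the only surviving term being the one carrying the factor $\varepsilon^{-1}\partial\psi/\partial y_{j}$, the left side vanishing because the gradients are bounded), I get $\iint_{Q_{T}\times\Omega\times\mathcal{K}}\widehat{\mathbf{u}}\,\widehat{\theta}\,\partial_{j}\widehat{\psi}\,dxdtd\mathbb{P}d\beta=0$, where I used $\widehat{\partial\psi/\partial y_{j}}=\partial_{j}\widehat{\psi}$ from Lemma~\ref{l2.1}. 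By the weak-derivative calculus on $\mathcal{K}$ (Proposition~\ref{p2.2} and the formalism of Section~2) this forces all weak $y$-derivatives of $\widehat{\mathbf{u}}$ to vanish, so $\mathbf{u}=\mathbf{u}(x,t,\tau,\omega)$ is independent of $y$.

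The core step is the identification of $v_{i}$. For a smooth vector field $\Phi=(\Phi_{i})$ almost periodic in $(y,\tau)$, solenoidal in $y$ (that is $\Div_{y}\Phi=0$) and of zero $y$-mean ($M_{y}(\Phi_{i})=0$), and for $\theta\in B(\Omega;\mathcal{C}_{0}^{\infty}(Q_{T}))$, I would evaluate $I_{\varepsilon}=\sum_{i}\int_{Q_{T}\times\Omega}(\partial u_{\varepsilon}/\partial x_{i})\,\theta\,\Phi_{i}(x/\varepsilon,t/\varepsilon^{2})$ in two ways. Passing to the limit directly yields $\sum_{i}\iint\widehat{v}_{i}\widehat{\theta}\widehat{\Phi}_{i}\,d\beta$; integrating by parts in $x$ first, the term carrying $\varepsilon^{-1}\Div_{y}\Phi$ drops out, and using the weak $\Sigma$-convergence of $u_{\varepsilon}$ together with the $y$-independence of $\mathbf{u}$ and $M_{y}(\Phi_{i})=0$ shows the remaining term vanishes. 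Hence $\sum_{i}\iint_{Q_{T}\times\Omega\times\mathcal{K}}(\widehat{v}_{i}-\partial u_{0}/\partial x_{i})\widehat{\theta}\widehat{\Phi}_{i}\,dxdtd\mathbb{P}d\beta=0$ for all such $\Phi$ and $\theta$, the subtracted slow term also dropping out since $M(\Phi_{i})=M_{\tau}(M_{y}(\Phi_{i}))=0$. With $\theta$ arbitrary this says that, for a.e. $(x,t,\omega)$, the field $(\widehat{v}_{i}-\partial u_{0}/\partial x_{i})_{i}$ annihilates every $y$-solenoidal, $y$-mean-zero field. By the Helmholtz-type decomposition available in the almost periodic framework---equivalently the closed-range and reflexivity properties of the gradient $\overline{D}_{y}:\mathcal{B}_{\#AP}^{1,p}(\mathbb{R}_{y}^{N})\to(\mathcal{B}_{AP}^{p}(\mathbb{R}_{y}^{N}))^{N}$ recorded in Section~2, with the solvability $g=\Delta_{y}R$ of (\ref{2.1}) as the underlying tool---this orthogonal complement is exactly the range of $\overline{D}_{y}$, so applying the decomposition $\tau$-wise and $(x,t,\omega)$-wise yields $u_{1}$ with $v_{i}-\partial u_{0}/\partial x_{i}=\overline{\partial}u_{1}/\partial y_{i}$.

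Finally I would verify that the pointwise correctors assemble into a genuine $L^{p}(Q_{T};\mathcal{B}_{AP}^{p}(\mathbb{R}_{\tau};\mathcal{B}_{\#AP}^{1,p}(\mathbb{R}_{y}^{N})))$-valued random variable: since $\overline{D}_{y}$ is an isometry onto its closed range, $u_{1}$ is uniquely determined by $(v_{i}-\partial u_{0}/\partial x_{i})_{i}$ with control of its norm, and measurability in $(x,t,\omega)$ and integrability in $\tau$ are inherited from those of the $v_{i}$. I expect the main obstacle to be precisely this last identification---turning the pointwise orthogonality into a single measurable $u_{1}$ in the completed (and in $y$ non-separated) corrector space---since it requires coupling the duality characterization of solenoidal fields in the Besicovitch/Gelfand setting with a measurable-selection argument over $\Omega$, rather than any further estimate on $u_{\varepsilon}$.
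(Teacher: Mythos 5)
Your overall architecture is the one the paper intends: its own ``proof'' of Theorem \ref{t3.2} is a one-line citation of \cite[Theorem 3.6]{NA}, and the detailed argument it does write out for the variant Theorem \ref{t3.3} is exactly your scheme --- extract weak $\Sigma$-limits $v_{i}$ of the gradients, test against fields solenoidal in $y$, and identify the annihilator of the solenoidal fields with the closed range of $\overline{D}_{y}$. The genuine gap is in your duality step. Since the hypothesis controls only the spatial gradient, your $\varepsilon\psi$ argument kills the $y$-dependence of the weak $\Sigma$-limit $\mathbf{u}$ of $u_{\varepsilon}$ but not its $\tau$-dependence, and this forces you to restrict to test fields with $M_{y}(\Phi _{i})=0$ in order to make the term $\iint \widehat{\mathbf{u}}\,\partial _{x_{i}}\theta \,\widehat{\Phi }_{i}$ disappear. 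But the annihilator of the $y$-solenoidal, $y$-mean-zero fields is strictly larger than the closed range of $\overline{D}_{y}$: it contains every field constant in $y$ (such a field pairs with $\Phi$ to give $c\cdot M_{y}(\Phi )=0$), and by Proposition \ref{p2.2}(ii) together with the continuity of $M$ no nonzero $y$-constant lies in the closure of $\{\overline{D}_{y}u\}$. So your orthogonality relation only yields
\begin{equation*}
v_{i}-\frac{\partial u_{0}}{\partial x_{i}}=\frac{\overline{\partial }u_{1}}{\partial y_{i}}+c_{i}(x,t,\tau ,\omega ),\qquad M_{\tau }(c_{i})=0,
\end{equation*}
and nothing in the stated hypotheses lets you eliminate $c_{i}$.

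The obstruction is not an artifact of the method: for the deterministic sequence $u_{\varepsilon }(x,t)=\varphi (x)\sin (t/\varepsilon ^{2})$ with $\varphi \in \mathcal{C}_{0}^{\infty }(Q)$, one has $u_{0}=0$ while $\partial u_{\varepsilon }/\partial x_{i}$ weakly $\Sigma$-converges to $\partial _{i}\varphi (x)\,\widehat{\sin }(s_{0})$, which is constant in $y$, not identically zero, and hence not of the form $\overline{\partial }u_{1}/\partial y_{i}$. So the decomposition (\ref{3.2}) cannot be recovered from the boundedness hypothesis alone; one needs additional control of the fast-time oscillations of $u_{\varepsilon }$ --- precisely what the almost sure strong $L^{2}(Q_{T})$ convergence (\ref{3.3}) supplies in Theorem \ref{t3.3}, where $\mathbf{u}=u_{0}$ is then independent of $(y,\tau )$, one may test against \emph{all} solenoidal fields, and the annihilator is exactly the closed range of $\overline{D}_{y}$. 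That is the version the paper actually uses in the homogenization, and it is the version your argument proves; as written, your proof of Theorem \ref{t3.2} (and, it appears, the theorem itself in this generality) does not go through. By contrast, the measurable-selection issue you single out at the end is routine once the pointwise identification is correct, since $\overline{D}_{y}$ is an isometry onto a closed subspace.
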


\begin{proof}
The proof of the above theorem follows exactly the same lines of reasoning
as the one of \cite[Theorem 3.6]{NA}.
\end{proof}

The above theorem will not be used in its present form. In practice, the
following modified version will be used.

\begin{theorem}
\label{t3.3}Assume the hypotheses of Theorem \emph{\ref{t3.2}} are
satisfied. Assume further that $p\geq 2$ and that there exist a subsequence $%
E^{\prime }$ from $E$ and a random variable $u_{0}$ with values in $%
L^{p}(0,T;W_{0}^{1,p}(Q))$ such that, as $E^{\prime }\ni \varepsilon
\rightarrow 0$, 
\begin{equation}
u_{\varepsilon }\rightarrow u_{0}\text{\ in }L^{2}(Q_{T})\text{\ }\mathbb{P}%
\text{-almost surely.}  \label{3.3}
\end{equation}%
Then there exist a subsequence of $E^{\prime }$ not relabeled and a random
variable $u_{1}$ with values in $L^{p}(Q_{T};$\linebreak $\mathcal{B}%
_{AP}^{p}(\mathbb{R}_{\tau };\mathcal{B}_{\#AP}^{1,p}(\mathbb{R}_{y}^{N})))$
such that \emph{(\ref{3.2})} holds as $E^{\prime }\ni \varepsilon
\rightarrow 0$.
\end{theorem}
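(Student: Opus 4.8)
The plan is to obtain Theorem~\ref{t3.3} from Theorem~\ref{t3.2}: apply the latter along the subsequence $E'$ to produce a candidate corrector together with a \emph{weak} limit, and then identify that weak limit with the given almost sure limit $u_0$. First, observe that the boundedness estimate assumed in Theorem~\ref{t3.2} is inherited by every subsequence of $E$; in particular it holds along $E'$. Hence Theorem~\ref{t3.2} applies to $(u_{\varepsilon})_{\varepsilon\in E'}$ and yields a further subsequence (which I shall not relabel), a random variable $v_0$ with values in $L^p(0,T;W_0^{1,p}(Q))$ and a random variable $u_1$ with values in $L^p(Q_T;\mathcal{B}_{AP}^p(\mathbb{R}_\tau;\mathcal{B}_{\#AP}^{1,p}(\mathbb{R}_y^N)))$ such that $u_{\varepsilon}\to v_0$ in $L^p(Q_T\times\Omega)$-weak and such that \eqref{3.2} holds with $v_0$ in place of $u_0$. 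Because passing to a further subsequence does not destroy the hypothesis \eqref{3.3}, the almost sure convergence $u_{\varepsilon}\to u_0$ in $L^2(Q_T)$ persists along this subsequence. Thus everything reduces to proving the identification $v_0=u_0$; once this is established, \eqref{3.2} holds exactly as stated, with the corrector $u_1$ just produced, and the theorem follows.

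To identify the two limits it suffices to test both against a subclass that is dense in $L^{p'}(Q_T\times\Omega)$, and I would take $\phi\in B(\Omega)\otimes\mathcal{D}(Q_T)$ (bounded in $\omega$ and compactly supported, hence bounded, in $(x,t)$), which is dense there since $B(\Omega)$ is dense in $L^{p'}(\Omega)$ and $\mathcal{D}(Q_T)$ in $L^{p'}(Q_T)$. For almost every fixed $\omega$, the hypothesis \eqref{3.3} gives $u_{\varepsilon}(\cdot,\cdot,\omega)\to u_0(\cdot,\cdot,\omega)$ in $L^2(Q_T)$, and since $\phi(\cdot,\cdot,\omega)\in L^\infty(Q_T)\subset L^2(Q_T)$ this yields the pointwise-in-$\omega$ convergence $\int_{Q_T}u_{\varepsilon}\phi\,dx\,dt\to\int_{Q_T}u_0\phi\,dx\,dt$. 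Here the assumption $p\ge 2$ plays its only role: it guarantees $L^p(Q_T)\hookrightarrow L^2(Q_T)$ on the bounded cylinder $Q_T$, so that the $L^2$ almost sure convergence is compatible with the $L^p$-framework of Theorem~\ref{t3.2}.

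It remains to pass the expectation through this convergence, which I would do by uniform integrability rather than by brute domination. From the Poincar\'e inequality, $\|u_{\varepsilon}\|_{L^p(Q_T)}\le C\|u_{\varepsilon}\|_{L^p(0,T;W_0^{1,p}(Q))}$, so the boundedness estimate gives $\sup_{\varepsilon}\mathbb{E}\|u_{\varepsilon}\|_{L^p(Q_T)}^p<\infty$. As $\phi$ is bounded in $\omega$, the real random variables $\omega\mapsto\int_{Q_T}u_{\varepsilon}(\cdot,\cdot,\omega)\phi(\cdot,\cdot,\omega)\,dx\,dt$ are bounded in $L^p(\Omega)$ uniformly in $\varepsilon$, hence uniformly integrable because $p>1$. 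Vitali's convergence theorem then upgrades the almost sure convergence of the previous paragraph to convergence of the expectations, namely $\int_{Q_T\times\Omega}u_{\varepsilon}\phi\to\int_{Q_T\times\Omega}u_0\phi$. On the other hand the weak convergence $u_{\varepsilon}\to v_0$ furnished by Theorem~\ref{t3.2} gives $\int_{Q_T\times\Omega}u_{\varepsilon}\phi\to\int_{Q_T\times\Omega}v_0\phi$. Equating the two limits for every $\phi$ in the dense class forces $v_0=u_0$, completing the identification.

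The only genuinely delicate point is the exponent mismatch in the previous step: since $p\ge 2$ we have $p'\le 2$, so a generic element of the test space $L^{p'}(Q_T\times\Omega)$ need not lie in $L^2(Q_T)$ fibrewise, and the almost sure $L^2$-convergence cannot be tested directly against it. Restricting to the bounded test functions $\phi\in B(\Omega)\otimes\mathcal{D}(Q_T)$ removes this difficulty, after which the substance of the argument is the interchange of limit and expectation; this is where care is needed, and it is handled cleanly by the uniform-integrability (Vitali) argument, the requisite uniform integrability being supplied precisely by the $L^p(\Omega)$-bound coming from the standing estimate.
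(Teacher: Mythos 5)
Your argument is correct, but it is organized differently from the paper's. The paper does not invoke Theorem \ref{t3.2} as a black box: it reruns the construction of the corrector from scratch, extracting a weak $\Sigma$-limit $\mathbf{v}$ of the gradients, testing against oscillating divergence-free fields $\varphi(x,t)\Psi(x/\varepsilon)\chi(t/\varepsilon^{2})\phi(\omega)$ with $\overline{\Div}_{y}[\varrho_{y}^{N}(\Psi)]=0$, integrating by parts, and using the claim that (\ref{3.3}) implies weak $\Sigma$-convergence of $u_{\varepsilon}$ to $u_{0}$ in order to obtain the orthogonality relation $\int_{\mathcal{K}_{y}}(\widehat{\mathbf{v}}-Du_{0})\cdot\widehat{\Psi}\,d\beta_{y}=0$, whence $\mathbf{v}-Du_{0}=\overline{D}_{y}u_{1}$ by the classical characterization of gradients. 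You instead apply Theorem \ref{t3.2} along $E'$ to get a pair $(v_{0},u_{1})$ and reduce everything to the identification $v_{0}=u_{0}$, which you establish by testing against $B(\Omega)\otimes\mathcal{D}(Q_{T})$ and passing the expectation through via uniform integrability (the $L^{p}(\Omega)$-bound on $\omega\mapsto\int_{Q_{T}}u_{\varepsilon}\phi$ coming from the standing estimate, with $p>1$) and Vitali. Your route is more economical in one respect: you only need to identify the weak limit against non-oscillating test functions, whereas the paper's argument requires the stronger assertion that (\ref{3.3}) yields weak $\Sigma$-convergence against test functions depending on $x/\varepsilon$ and $t/\varepsilon^{2}$ (a fact the paper states without proof); and your Vitali step supplies the justification for the limit--expectation interchange that the paper leaves implicit. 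What the paper's approach buys is that the corrector is constructed relative to the prescribed $u_{0}$ from the outset, so no separate identification of $Dv_{0}$ with $Du_{0}$ is needed; in your version that identification is immediate anyway once $v_{0}=u_{0}$ in $L^{p}(Q_{T}\times\Omega)$, since both are $L^{p}(0,T;W_{0}^{1,p}(Q))$-valued. Both proofs are sound.
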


\begin{proof}
Since $\sup_{\varepsilon \in E^{\prime }}\mathbb{E}\left\| Du_{\varepsilon
}\right\| _{L^{p}(Q_{T})^{N}}^{p}<\infty $, there exist a subsequence of $%
E^{\prime }$ not relabeled and $v=(v_{i})_{i}\in L^{p}(Q_{T}\times \Omega ;%
\mathcal{B}_{AP}^{p}(\mathbb{R}_{y,\tau }^{N+1}))^{N}$ such that $\frac{%
\partial u_{\varepsilon }}{\partial x_{j}}\rightarrow v_{j}$ in $%
L^{p}(Q_{T}\times \Omega )$-weak $\Sigma $. Let $\Phi _{\varepsilon
}(x,t,\omega )=\varphi (x,t)\Psi (x/\varepsilon )\chi (t/\varepsilon
^{2})\phi (\omega )$ ($(x,t,\omega )\in Q_{T}\times \Omega $) with $\varphi
\in \mathcal{C}_{0}^{\infty }(Q_{T})$, $\chi \in AP^{\infty }(\mathbb{R}%
_{\tau })$, $\phi \in B(\Omega )$ and $\Psi =(\psi _{j})_{1\leq j\leq N}\in
(AP^{\infty }(\mathbb{R}_{y}^{N}))^{N}$ with $\overline{{\Div}}_{y}[\varrho
_{y}^{N}(\Psi )]=0$ where $\varrho _{y}^{N}(\Psi ):=(\varrho _{y}(\psi
_{j}))_{1\leq j\leq N}$, $\varrho _{y}$ being denoting the canonical mapping
of $B_{AP}^{p}(\mathbb{R}_{y}^{N})$ into $\mathcal{B}_{AP}^{p}(\mathbb{R}%
_{y}^{N})$. Clearly 
\begin{equation*}
\sum_{j=1}^{N}\int_{Q_{T}\times \Omega }\frac{\partial u_{\varepsilon }}{%
\partial x_{j}}\varphi \psi _{j}^{\varepsilon }\chi ^{\varepsilon }\phi dxdtd%
\mathbb{P}=-\sum_{j=1}^{N}\int_{Q_{T}\times \Omega }u_{\varepsilon }\psi
_{j}^{\varepsilon }\frac{\partial \varphi }{\partial x_{j}}\chi
^{\varepsilon }\phi dxdtd\mathbb{P}
\end{equation*}%
where $\psi _{j}^{\varepsilon }(x)=\psi _{j}(x/\varepsilon )$ and $\chi
^{\varepsilon }(t)=\chi (t/\varepsilon ^{2})$. One can easily see that
assumption (\ref{3.3}) implies the weak $\Sigma $-convergence of $%
(u_{\varepsilon })_{\varepsilon \in E^{\prime }}$ towards $u_{0}$, so that,
passing to the limit in the above equation when $E^{\prime }\ni \varepsilon
\rightarrow 0$ yields 
\begin{equation*}
\sum_{j=1}^{N}\iint_{Q_{T}\times \Omega \times \mathcal{K}}\widehat{v}%
_{j}\varphi \widehat{\psi }_{j}\widehat{\chi }\phi dxdtd\mathbb{P}d\beta
=-\sum_{j=1}^{N}\iint_{Q_{T}\times \Omega \times \mathcal{K}}u_{0}\widehat{%
\psi }_{j}\frac{\partial \varphi }{\partial x_{j}}\widehat{\chi }\phi dxdtd%
\mathbb{P}d\beta
\end{equation*}%
or equivalently, 
\begin{equation*}
\iint_{Q_{T}\times \Omega \times \mathcal{K}}\left( \widehat{\mathbf{v}}%
-Du_{0}\right) \cdot \widehat{\Psi }\varphi \widehat{\chi }\phi dxdtd\mathbb{%
P}d\beta =0\text{,}
\end{equation*}%
and so, as $\varphi $, $\phi $ and $\chi $ are arbitrarily fixed, 
\begin{equation*}
\int_{\mathcal{K}_{y}}\left( \widehat{\mathbf{v}}(x,t,s,s_{0},\omega
)-Du_{0}(x,t,\omega )\right) \cdot \widehat{\Psi }(s)d\beta _{y}=0
\end{equation*}%
for all $\Psi $ as above and for a.e. $x,t,s_{0},\omega $. Therefore, the
existence of a function $u_{1}(x,t,\cdot ,\tau ,\omega )\in \mathcal{B}%
_{\#AP}^{1,p}(\mathbb{R}_{y}^{N})$ such that 
\begin{equation*}
\mathbf{v}(x,t,\cdot ,\tau ,\omega )-Du_{0}(x,t,\omega )=\overline{D}%
_{y}u_{1}(x,t,\cdot ,\tau ,\omega )
\end{equation*}%
for a.e. $x,t,\tau ,\omega $ is ensured by a well-known classical result.
This yields the existence of a random variable $u_{1}:(x,t,\tau ,\omega
)\mapsto u_{1}(x,t,\cdot ,\tau ,\omega )$ with values in $\mathcal{B}%
_{\#AP}^{1,p}(\mathbb{R}_{y}^{N})$ such that $\mathbf{v}=Du_{0}+\overline{D}%
_{y}u_{1}$.
\end{proof}

We will also deal with the product of sequences. For that reason, we give
one further

\begin{definition}
\label{d3.2}\emph{A sequence }$(u_{\varepsilon })_{\varepsilon >0}$\emph{\
of }$L^{p}(Q_{T})$\emph{-valued random variables (}$1\leq p<\infty $\emph{)
is said to }strongly $\Sigma $-converge\emph{\ in }$L^{p}(Q_{T}\times \Omega
)$\emph{\ to some }$L^{p}(Q_{T};\mathcal{B}_{AP}^{p}(\mathbb{R}_{y,\tau
}^{N+1}))$\emph{-valued random variable }$u_{0}$\emph{\ if it is weakly }$%
\Sigma $\emph{-convergent towards }$u_{0}$\emph{\ and further satisfies the
following condition: }%
\begin{equation}
\left\| u_{\varepsilon }\right\| _{L^{p}(Q_{T}\times \Omega )}\rightarrow
\left\| \widehat{u}_{0}\right\| _{L^{p}(Q_{T}\times \Omega \times \mathcal{K}%
)}.  \label{3.12}
\end{equation}%
\emph{We denote this by }$u_{\varepsilon }\rightarrow u_{0}$\emph{\ in }$%
L^{p}(Q_{T}\times \Omega )$\emph{-strong }$\Sigma $\emph{.}
\end{definition}

\begin{remark}
\label{r3.1}\emph{(1) By the above definition, the uniqueness of the limit
of such a sequence is ensured. (2) By \cite{Hom1} it is immediate that for
any }$u\in L^{p}(Q_{T}\times \Omega ;AP(\mathbb{R}_{y,\tau }^{N+1}))$\emph{,
the sequence }$(u^{\varepsilon })_{\varepsilon >0}$\emph{\ is strongly }$%
\Sigma $\emph{-convergent to }$\varrho (u)$\emph{.}
\end{remark}

The next result will be very useful in the last section of this paper. Its
proof is copied on the one of \cite[Theorem 6]{WoukengArxiv}; see also \cite%
{Zhikov1}.

\begin{theorem}
\label{t3.4}Let $1<p,q<\infty $ and $r\geq 1$ be such that $1/r=1/p+1/q\leq
1 $. Assume $(u_{\varepsilon })_{\varepsilon \in E}\subset L^{q}(Q_{T}\times
\Omega )$ is weakly $\Sigma $-convergent in $L^{q}(Q_{T}\times \Omega )$ to
some $u_{0}\in L^{q}(Q_{T}\times \Omega ;\mathcal{B}_{AP}^{q}(\mathbb{R}%
_{y,\tau }^{N+1}))$, and $(v_{\varepsilon })_{\varepsilon \in E}\subset
L^{p}(Q_{T}\times \Omega )$ is strongly $\Sigma $-convergent in $%
L^{p}(Q_{T}\times \Omega )$ to some $v_{0}\in L^{p}(Q_{T}\times \Omega ;%
\mathcal{B}_{AP}^{p}(\mathbb{R}_{y,\tau }^{N+1}))$. Then the sequence $%
(u_{\varepsilon }v_{\varepsilon })_{\varepsilon \in E}$ is weakly $\Sigma $%
-convergent in $L^{r}(Q_{T}\times \Omega )$ to $u_{0}v_{0}$.
\end{theorem}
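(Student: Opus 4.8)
The plan is to verify the defining relation (\ref{3.1}) for the product sequence at the exponent $r$, i.e. to show that for every test function $f\in B(\Omega ;L^{r^{\prime }}(Q_{T};A))$ (with $1/r^{\prime }=1-1/r$) one has
\[
\int_{Q_{T}\times \Omega }u_{\varepsilon }v_{\varepsilon }f^{\varepsilon }\,dxdtd\mathbb{P}\rightarrow \iint_{Q_{T}\times \Omega \times \mathcal{K}}\widehat{u}_{0}\widehat{v}_{0}\widehat{f}\,dxdtd\mathbb{P}d\beta ,
\]
where $f^{\varepsilon }(x,t,\omega )=f(x,t,x/\varepsilon ,t/\varepsilon ^{2},\omega )$. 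First I would record that $(u_{\varepsilon }v_{\varepsilon })$ is bounded in $L^{r}(Q_{T}\times \Omega )$: by H\"{o}lder (using $1/r=1/p+1/q$) one has $\Vert u_{\varepsilon }v_{\varepsilon }\Vert _{L^{r}}\leq \Vert u_{\varepsilon }\Vert _{L^{q}}\Vert v_{\varepsilon }\Vert _{L^{p}}$, and both factors are bounded since weak $\Sigma $-convergence forces $L^{q}$-boundedness while strong $\Sigma $-convergence forces $\Vert v_{\varepsilon }\Vert _{L^{p}}\rightarrow \Vert \widehat{v}_{0}\Vert _{L^{p}(Q_{T}\times \Omega \times \mathcal{K})}$. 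It also suffices, by density, to treat $f$ lying in the dense subspace spanned by products $\varphi (x,t)\psi (y,\tau )\theta (\omega )$ with $\varphi \in \mathcal{C}(\overline{Q_{T}})$, $\psi \in A$, $\theta \in B(\Omega )$.

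The core of the argument is to approximate the strong limit by realizable profiles. I would choose a sequence $g_{n}$ of such nice functions whose classes $\varrho (g_{n})$ converge to $v_{0}$ in $L^{p}(Q_{T}\times \Omega ;\mathcal{B}_{AP}^{p}(\mathbb{R}_{y,\tau }^{N+1}))$; for such $g_{n}$ the realizations $g_{n}^{\varepsilon }$ strongly $\Sigma $-converge to $\varrho (g_{n})$ by Remark \ref{r3.1}(2). I then split
\[
\int u_{\varepsilon }v_{\varepsilon }f^{\varepsilon }=\int u_{\varepsilon }g_{n}^{\varepsilon }f^{\varepsilon }+\int u_{\varepsilon }(v_{\varepsilon }-g_{n}^{\varepsilon })f^{\varepsilon }.
\]
For the first summand, the product $g_{n}f$ is an admissible test function for $u_{\varepsilon }$ in $B(\Omega ;L^{q^{\prime }}(Q_{T};A))$: the Gelfand transform is an algebra homomorphism so $\widehat{g_{n}f}=\widehat{g}_{n}\widehat{f}$, and the exponents match because $1/p+1/r^{\prime }=1/q^{\prime }$; hence by weak $\Sigma $-convergence of $u_{\varepsilon }$ (Definition \ref{d3.1}) this term tends to $\iint \widehat{u}_{0}\widehat{g}_{n}\widehat{f}$, which in turn converges to $\iint \widehat{u}_{0}\widehat{v}_{0}\widehat{f}$ as $n\rightarrow \infty $ (H\"{o}lder with $1/q+1/p+1/r^{\prime }=1$, together with $\widehat{g}_{n}\rightarrow \widehat{v}_{0}$ in $L^{p}$). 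For the second summand, the single H\"{o}lder estimate
\[
\Big|\int u_{\varepsilon }(v_{\varepsilon }-g_{n}^{\varepsilon })f^{\varepsilon }\Big|\leq \Vert u_{\varepsilon }\Vert _{L^{q}}\,\Vert v_{\varepsilon }-g_{n}^{\varepsilon }\Vert _{L^{p}}\,\Vert f^{\varepsilon }\Vert _{L^{r^{\prime }}}
\]
holds (again $1/q+1/p+1/r^{\prime }=1$), and the two outer factors are bounded uniformly in $\varepsilon $; so everything reduces to controlling $\Vert v_{\varepsilon }-g_{n}^{\varepsilon }\Vert _{L^{p}}$.

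The step I expect to be the main obstacle is precisely the lemma
\[
\lim_{n\rightarrow \infty }\ \limsup_{\varepsilon \rightarrow 0}\ \Vert v_{\varepsilon }-g_{n}^{\varepsilon }\Vert _{L^{p}(Q_{T}\times \Omega )}=0,
\]
which is where strong (as opposed to merely weak) $\Sigma $-convergence of $v_{\varepsilon }$ is genuinely used. The sequence $v_{\varepsilon }-g_{n}^{\varepsilon }$ is weakly $\Sigma $-convergent to $v_{0}-\varrho (g_{n})$, so weak lower semicontinuity gives $\liminf _{\varepsilon }\Vert v_{\varepsilon }-g_{n}^{\varepsilon }\Vert _{L^{p}}\geq \Vert \widehat{v}_{0}-\widehat{g}_{n}\Vert _{L^{p}(Q_{T}\times \Omega \times \mathcal{K})}\rightarrow 0$; the difficulty is the matching upper bound. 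When $p=2$ this is clean: expanding $\Vert v_{\varepsilon }-g_{n}^{\varepsilon }\Vert _{2}^{2}=\Vert v_{\varepsilon }\Vert _{2}^{2}-2\,\mathrm{Re}\langle v_{\varepsilon },g_{n}^{\varepsilon }\rangle +\Vert g_{n}^{\varepsilon }\Vert _{2}^{2}$, the outer norms converge by the strong $\Sigma $-convergence of $v_{\varepsilon }$ and of $g_{n}^{\varepsilon }$, while the cross term converges to $\iint \widehat{v}_{0}\overline{\widehat{g}_{n}}$ because $g_{n}^{\varepsilon }$ is a realization and $v_{\varepsilon }$ converges weakly $\Sigma $, so the whole expression tends to $\Vert \widehat{v}_{0}-\widehat{g}_{n}\Vert _{2}^{2}$. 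For general $1<p<\infty $ there is no inner product, and the claim rests on the fact that the defining pair (weak $\Sigma $-convergence together with convergence of the norms) is modelled on the Radon--Riesz property of the uniformly convex spaces $L^{p}$: transported to the ``unfolded'' space $L^{p}(Q_{T}\times \Omega \times \mathcal{K})$ it forces strong convergence of the profiles and hence the required estimate. This is exactly the technical content adapted from \cite[Theorem 6]{WoukengArxiv} (see also \cite{Zhikov1}). Granting the lemma, letting first $\varepsilon \rightarrow 0$ and then $n\rightarrow \infty $ in the splitting yields the claim at every exponent $r\geq 1$ (the endpoint $r=1$ being covered since $\Vert f^{\varepsilon }\Vert _{L^{\infty }}$ stays bounded), and the uniqueness of the weak $\Sigma $-limit identifies it as $u_{0}v_{0}$.
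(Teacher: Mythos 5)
Your proposal is correct and follows essentially the same route as the argument the paper itself relies on: the paper gives no in-text proof of Theorem \ref{t3.4} but states that its proof is copied from \cite[Theorem 6]{WoukengArxiv} (see also \cite{Zhikov1}), and that argument is precisely the scheme you describe --- approximate the strong limit $v_{0}$ by realizable profiles $g_{n}$, split $\int u_{\varepsilon }v_{\varepsilon }f^{\varepsilon }$ accordingly, handle the first piece by testing $u_{\varepsilon }$ against $g_{n}f$ (your exponent bookkeeping $1/p+1/r^{\prime }=1/q^{\prime }$ is right), and kill the remainder by H\"{o}lder. The one step you defer to the references, namely that weak $\Sigma $-convergence together with convergence of the $L^{p}$-norms yields $\lim_{n}\limsup_{\varepsilon }\Vert v_{\varepsilon }-g_{n}^{\varepsilon }\Vert _{L^{p}}=0$ for general $p\neq 2$ via uniform convexity (you prove it directly for $p=2$), is exactly the technical content of those cited results, so your write-up is no less complete than the paper's.
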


The following result will be of great interest in practice. It is a mere
consequence of the preceding theorem.

\begin{corollary}
\label{c3.4}Let $(u_{\varepsilon })_{\varepsilon \in E}\subset
L^{p}(Q_{T}\times \Omega )$ and $(v_{\varepsilon })_{\varepsilon \in
E}\subset L^{p^{\prime }}(Q_{T}\times \Omega )\cap L^{\infty }(Q_{T}\times
\Omega )$ ($1<p<\infty $ and $p^{\prime }=p/(p-1)$) be two sequences such
that:

\begin{itemize}
\item[(i)] $u_{\varepsilon }\rightarrow u_{0}$ in $L^{p}(Q_{T}\times \Omega
) $-weak $\Sigma $;

\item[(ii)] $v_{\varepsilon }\rightarrow v_{0}$ in $L^{p^{\prime
}}(Q_{T}\times \Omega )$-strong $\Sigma $;

\item[(iii)] $(v_{\varepsilon })_{\varepsilon \in E}$ is bounded in $%
L^{\infty }(Q_{T}\times \Omega )$.
\end{itemize}

\noindent Then $u_{\varepsilon }v_{\varepsilon }\rightarrow u_{0}v_{0}$ in $%
L^{p}(Q_{T}\times \Omega )$-weak $\Sigma $.
\end{corollary}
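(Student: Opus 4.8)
The plan is to deduce everything from Theorem \ref{t3.4} together with the extra $L^{\infty}$-control provided by hypothesis (iii). First I would observe that hypotheses (i)--(ii) are almost exactly those of Theorem \ref{t3.4}, with the weakly convergent factor sitting in $L^{p}$ (so that the theorem's exponent $q$ is our $p$) and the strongly convergent factor in $L^{p^{\prime }}$ (so that the theorem's exponent $p$ is our $p^{\prime }$). Since $1<p<\infty $ forces $1<p^{\prime }<\infty $ and $1/p+1/p^{\prime }=1$, Theorem \ref{t3.4} applies with $r=1$ and yields at once that $(u_{\varepsilon }v_{\varepsilon })_{\varepsilon \in E}$ is weakly $\Sigma $-convergent in $L^{1}(Q_{T}\times \Omega )$ to $u_{0}v_{0}$. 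Thus the \emph{identity} of the limit is already settled; what remains is to upgrade the integrability of the convergence from $L^{1}$ to $L^{p}$, and this is precisely where hypothesis (iii) enters.

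Next I would establish the boundedness needed to run the compactness theorem. By Remark \ref{r3.0} weak $\Sigma $-convergence implies weak convergence, so $(u_{\varepsilon })_{\varepsilon \in E}$ is bounded in $L^{p}(Q_{T}\times \Omega )$; combining this with (iii) and the elementary estimate $\left\Vert u_{\varepsilon }v_{\varepsilon }\right\Vert _{L^{p}(Q_{T}\times \Omega )}\leq \left\Vert v_{\varepsilon }\right\Vert _{L^{\infty }(Q_{T}\times \Omega )}\left\Vert u_{\varepsilon }\right\Vert _{L^{p}(Q_{T}\times \Omega )}$ shows that the product sequence is bounded in $L^{p}(Q_{T}\times \Omega )$. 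Hence, by Theorem \ref{t3.1}, every subsequence of $E$ admits a further subsequence along which $(u_{\varepsilon }v_{\varepsilon })$ weakly $\Sigma $-converges in $L^{p}(Q_{T}\times \Omega )$ to some limit $w_{0}$.

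It then remains to identify $w_{0}$ with $u_{0}v_{0}$, and this is the crux of the argument. Since $Q_{T}$ has finite measure we have the continuous inclusion $L^{\infty }(Q_{T})\subset L^{p^{\prime }}(Q_{T})$, so any test function $g\in B(\Omega ;L^{\infty }(Q_{T};A))$ --- the admissible class for $L^{1}$-weak $\Sigma $-convergence in Definition \ref{d3.1} --- also belongs to $B(\Omega ;L^{p^{\prime }}(Q_{T};A))$, the admissible class for $L^{p}$-weak $\Sigma $-convergence. Testing the $L^{p}$-weak $\Sigma $-convergence along the extracted subsequence against such $g$ produces the same limit relation as the $L^{1}$-weak $\Sigma $-convergence already obtained from Theorem \ref{t3.4}; consequently $\iint_{Q_{T}\times \Omega \times \mathcal{K}}(\widehat{w}_{0}-\widehat{u_{0}v_{0}})\widehat{g}\,dxdtd\mathbb{P}d\beta =0$ for all such $g$. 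Because the Gelfand images $\widehat{g}$ fill out a dense subset $B(\Omega ;L^{\infty }(Q_{T};\mathcal{C}(\mathcal{K})))$ of $L^{p^{\prime }}(Q_{T}\times \Omega \times \mathcal{K})$, we conclude $\widehat{w}_{0}=\widehat{u_{0}v_{0}}$ and hence $w_{0}=u_{0}v_{0}$.

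Finally, since the limit $u_{0}v_{0}$ is the same for every weakly $\Sigma $-convergent subsequence, the usual subsequence-of-subsequence argument upgrades subsequential convergence to convergence of the whole family, giving $u_{\varepsilon }v_{\varepsilon }\rightarrow u_{0}v_{0}$ in $L^{p}(Q_{T}\times \Omega )$-weak $\Sigma $. I expect the only delicate point to be this last identification step, specifically the interplay between the two scales of test-function spaces and the density of the Gelfand transforms in $L^{p^{\prime }}(Q_{T}\times \Omega \times \mathcal{K})$; the integrability upgrade itself is harmless once hypothesis (iii) secures the uniform $L^{p}$-bound on the products.
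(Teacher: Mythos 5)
Your proposal follows the paper's own argument exactly: apply Theorem \ref{t3.4} with $r=1$ to get the $L^{1}$-weak $\Sigma$ limit $u_{0}v_{0}$, use hypothesis (iii) to bound the products in $L^{p}$ and invoke Theorem \ref{t3.1} for a subsequential $L^{p}$-weak $\Sigma$ limit $w_{0}$, and then identify $w_{0}=u_{0}v_{0}$. The paper leaves the identification and the subsequence-of-subsequence step implicit, whereas you spell them out (correctly, via the inclusion of the $L^{1}$ test class into the $L^{p}$ test class on the finite-measure set $Q_{T}$), so this is a faithful and slightly more detailed version of the same proof.
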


\begin{proof}
By Theorem \ref{t3.4}, the sequence $(u_{\varepsilon }v_{\varepsilon
})_{\varepsilon \in E}$ $\Sigma $-converges towards $u_{0}v_{0}$ in $%
L^{1}(Q_{T}\times \Omega )$. Besides the same sequence is bounded in $%
L^{p}(Q_{T}\times \Omega )$ so that by the Theorem \ref{t3.1}, it weakly $%
\Sigma $-converges in $L^{p}(Q_{T}\times \Omega )$ towards some $w_{0}\in
L^{p}(Q_{T}\times \Omega ;\mathcal{B}_{AP}^{p}(\mathbb{R}^{N+1}))$. This
gives as a result $w_{0}=u_{0}v_{0}$.
\end{proof}

\section{Statement of the problem: A priori estimates and tightness property}

\subsection{Statement of the problem}

Let $Q$ be a Lipschitz domain of $\mathbb{R}^{N}$ and $T$ a positive real
number. By $Q_{T}$ we denote the cylinder $Q\times (0,T)$. On a given
probability space $(\Omega ,\mathcal{F},\mathbb{P})$ is defined a prescribed 
$m$-dimensional standard Wiener process $W$. We equip $(\Omega ,\mathcal{F},%
\mathbb{P})$ with the natural filtration of $W$. We consider the following
stochastic partial differential equations 
\begin{equation}
\left\{ 
\begin{array}{l}
du_{\varepsilon }=\left( \Div\left( a\left( \frac{x}{\varepsilon },\frac{t}{%
\varepsilon ^{2}}\right) Du_{\varepsilon }\right) +\frac{1}{\varepsilon }%
g\left( \frac{x}{\varepsilon },\frac{t}{\varepsilon ^{2}},u_{\varepsilon
}\right) \right) dt+M\left( \frac{x}{\varepsilon },\frac{t}{\varepsilon ^{2}}%
,u_{\varepsilon }\right) dW\text{ in }Q_{T} \\ 
u_{\varepsilon }=0\text{ on }\partial Q\times (0,T) \\ 
u_{\varepsilon }(x,0)=u^{0}(x)\in L^{2}(Q).%
\end{array}%
\right.  \label{4.1}
\end{equation}%
We assume that the coefficients of (\ref{4.1}) are constrained as follows:

\begin{itemize}
\item[\textbf{A1}] \textbf{Uniform ellipticity}. The matrix $a(y,\tau
)=(a_{ij}(y,\tau ))_{1\leq i,j\leq N}\in (L^{\infty }(\mathbb{R}%
^{N+1}))^{N\times N}$ is real, not necessarily symmetric, positive definite,
i.e, there exists $\Lambda >0$ such that 
\begin{equation*}
\begin{array}{l}
\left\Vert a_{ij}\right\Vert _{L^{\infty }(\mathbb{R}^{N+1})}<\Lambda
^{-1},\,\,1\leq i,j\leq N, \\ 
\sum_{i,j=1}^{N}a_{ij}(y,\tau )\zeta _{i}\zeta _{j}\geq \Lambda |\zeta |^{2}%
\text{ for all }(y,\tau )\in \mathbb{R}^{N+1},\zeta \in \mathbb{R}^{N}.%
\end{array}%
\end{equation*}

\item[\textbf{A2}] \textbf{Lipschitz continuity}. There exists $C>0$ such
that for any $(y,\tau )\in \mathbb{R}^{N+1}$ and $u\in \mathbb{R}$ 
\begin{equation*}
\begin{array}{l}
\left\vert \partial _{u}g(y,\tau ,u)\right\vert \leq C \\ 
\left\vert \partial _{u}g(y,\tau ,u_{1})-\partial _{u}g(y,\tau
,u_{2})\right\vert \leq C\left\vert u_{1}-u_{2}\right\vert (1+\left\vert
u_{1}\right\vert +\left\vert u_{2}\right\vert )^{-1}.%
\end{array}%
\end{equation*}

\item[\textbf{A3}] $g(y,\tau ,0)=0$ for any $(y,\tau )\in \mathbb{R}^{N+1}$.

\item[\textbf{A4}] \textbf{Almost periodicity}. We assume that $g(\cdot
,\cdot ,u)\in AP(\mathbb{R}_{y,\tau }^{N+1})$ for any $u\in \mathbb{R}$ with 
$M_{y}(g(\cdot ,\tau ,u))=0$ for all $(\tau ,u)\in \mathbb{R}^{2}$. We see
by (\ref{2.1}) (see Section 2) that there exists a unique $R(\cdot ,\cdot
,u)\in AP(\mathbb{R}_{y,\tau }^{N+1})$ such that $\Delta _{y}R(\cdot ,\cdot
,u)=g(\cdot ,\cdot ,u)$ and $M_{y}(R(\cdot ,\tau ,u))=0$ for all $\tau $, $%
u\in \mathbb{R}$. Moreover $R(\cdot ,\cdot ,u)$ is at least twice
differentiable with respect to $y$. Let $G(y,\tau ,u)=D_{y}R(y,\tau ,u)$.
Thanks to \textbf{A2} and \textbf{A3} we see that 
\begin{equation}
\left\vert G(y,\tau ,u)\right\vert \leq C\left\vert u\right\vert \text{, }%
\left\vert \partial _{u}G(y,\tau ,u)\right\vert \leq C\text{,}  \label{4.0}
\end{equation}%
\begin{equation}
\left\vert \partial _{u}G(y,\tau ,u_{1})-\partial _{u}G(y,\tau
,u_{2})\right\vert \leq C\left\vert u_{1}-u_{2}\right\vert (1+\left\vert
u_{1}\right\vert +\left\vert u_{2}\right\vert )^{-1}.  \label{4.0'}
\end{equation}%
We also assume that the functions $a_{ij}$ lie in $B_{AP}^{2}(\mathbb{R}%
_{y,\tau }^{N+1})$ for all $1\leq i,j\leq N$.

\item[\textbf{A5}] Let $M(u)=(M_{i}(y,\tau ,u))_{1\leq i\leq m}$ and we
assume that there exists $K>0$ such that 
\begin{equation*}
\begin{array}{l}
\sum_{i=1}^{m}\left\vert M_{i}(y,\tau ,0)\right\vert ^{2} \leq K, \\ 
\left\vert M_{i}(y,\tau ,u_{1})-M_{i}(y,\tau ,u_{2})\right\vert \leq
K\left\vert u_{1}-u_{2}\right\vert \,\,\,\,i=1,...,m%
\end{array}%
\end{equation*}%
\text{for any } $(y,\tau )\in \mathbb{R}^{N+1}\text{ and }u_{1},u_{2}\in 
\mathbb{R}$. We easily see from these equations that 
\begin{equation*}
\sum_{i=1}^{m}\left\vert M_{i}(y,\tau ,u)\right\vert ^{2}\leq K(1+\left\vert
u\right\vert ^{2})\text{ for any }u\in \mathbb{R},(y,\tau )\in \mathbb{R}%
^{N+1}.
\end{equation*}%
Moreover we assume that the function $(y,\tau )\mapsto M_{i}(y,\tau ,u)$
lies in $B_{AP}^{2}(\mathbb{R}_{y,\tau }^{N+1})\cap L^{\infty }(\mathbb{R}%
_{y,\tau }^{N+1})$.
\end{itemize}

In order to simplify our presentation, we need to make some notations that
will be used in the sequel. We denote by $L^{2}(Q)$ and $H^{1}(Q)$ the usual
Lebesgue space and Sobolev space, respectively. By $(u,v)$ we denote the
inner product in $L^{2}(Q)$. Its associated norm is denoted by $\left\vert
\cdot \right\vert $. The space of elements of $H^{1}(Q)$ whose trace
vanishes on $\partial Q$ is denoted by $H_{0}^{1}(Q)$. Thanks to Poincar\'{e}%
's inequality we can endow $H_{0}^{1}(Q)$ with the inner product $%
((u,v))=\int_{Q}Du\cdot Dvdx$ whose associated norm $\left\Vert u\right\Vert 
$ is equivalent to the usual $H^{1}$-norm for any $u\in H_{0}^{1}(Q)$. The
duality pairing between $H_{0}^{1}(Q)$ and $H^{-1}(Q)$ is denoted by $%
\left\langle \cdot ,\cdot \right\rangle $.

Let $X$ be a Banach space, by $L^{p}(0,T;X)$ we mean the space of measurable
functions $\phi :[0,T]\rightarrow X$ such that 
\begin{equation*}
\begin{cases}
\left( \int_{0}^{T}\left\| \phi (t)\right\| _{X}^{p}\right) ^{1/p}<\infty 
\text{ if }1\leq 1<\infty , \\ 
\text{ess}\sup_{t\in \lbrack 0,T]}\left\| \phi (t)\right\| _{X}<\infty \text{
if }p=\infty .%
\end{cases}%
\end{equation*}%
Similarly we can define the space $L^{p}(\Omega ;X)$ where $(\Omega ,%
\mathcal{F},\mathbb{P})$ is a probability space.

From the work of \cite{KRYLOV} for example (see also \cite{pardoux}), the
existence and uniqueness of solution $u_{\varepsilon }$ of (\ref{4.1}) which
is subjected to conditions \textbf{A1}-\textbf{A5} are very well-known.

\begin{theorem}[\protect\cite{KRYLOV}]
\label{t4.1}For any fixed $\varepsilon >0$, there exists an $\mathcal{F}^{t}$%
-progressively measurable process $u_{\varepsilon }\in L^{2}(\Omega \times
\lbrack 0,T];H_{0}^{1}(Q))$ such that 
\begin{eqnarray}
\left( u_{\varepsilon }(t),v\right) &=&\left( u^{0},v\right)
-\int_{0}^{t}\left( a\left( \frac{x}{\varepsilon },\frac{\tau }{\varepsilon
^{2}}\right) Du_{\varepsilon }(\tau ),Dv\right) d\tau +\frac{1}{\varepsilon }%
\int_{0}^{t}g\left( \frac{x}{\varepsilon },\frac{\tau }{\varepsilon ^{2}}%
,u_{\varepsilon }(\tau )\right) vd\tau  \notag \\
&&+\int_{0}^{t}\left( M\left( \frac{x}{\varepsilon },\frac{\tau }{%
\varepsilon ^{2}},u_{\varepsilon }(\tau )\right) ,v\right) dW  \label{4.2}
\end{eqnarray}%
for any $v\in H_{0}^{1}(Q)$ and for almost all $(\omega ,t)\in \Omega \times
\lbrack 0,T]$. Such a process is unique in the following sense: 
\begin{equation*}
\mathbb{P}\left( \omega :u_{\varepsilon }(t)=\overline{u}_{\varepsilon }(t)%
\text{\ in }H^{-1}(Q)\ \forall t\in \lbrack 0,T]\right) =1
\end{equation*}%
for any $u_{\varepsilon }$ and $\overline{u}_{\varepsilon }$ satisfying 
\emph{(\ref{4.2})}.
\end{theorem}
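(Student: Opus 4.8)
The plan is to recognize (\ref{4.1}) as an abstract stochastic evolution equation of monotone type in a Gelfand triple and then to invoke the general variational existence and uniqueness theory (as in Krylov--Rozovskii or Pardoux), whose hypotheses we verify from \textbf{A1}--\textbf{A5}. Fix $\varepsilon>0$; since then $a(x/\varepsilon,t/\varepsilon^2)$, $g(x/\varepsilon,t/\varepsilon^2,\cdot)$ and $M(x/\varepsilon,t/\varepsilon^2,\cdot)$ are merely fixed bounded measurable coefficients, the scaling plays no role beyond contributing the fixed constant $1/\varepsilon$. I would work in the triple $V=H_0^1(Q)\hookrightarrow H=L^2(Q)\hookrightarrow V'=H^{-1}(Q)$, define the time-dependent, $\omega$-free operator $A(t)u=\Div(a(x/\varepsilon,t/\varepsilon^2)Du)+\tfrac1\varepsilon g(x/\varepsilon,t/\varepsilon^2,u)\colon V\to V'$ and the diffusion coefficient $B(t)u=M(x/\varepsilon,t/\varepsilon^2,u)$, so that (\ref{4.1}) reads $du_\varepsilon=A(t)u_\varepsilon\,dt+B(t)u_\varepsilon\,dW$ with $u_\varepsilon(0)=u^0$.

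First I would check the four structural conditions. (i) Growth/boundedness of $A$ in $V'$: from $\left\Vert a_{ij}\right\Vert _{\infty}<\Lambda^{-1}$ the principal part obeys $\left\Vert \Div(aDu)\right\Vert _{V'}\le\Lambda^{-1}\left\Vert u\right\Vert $, while \textbf{A2}--\textbf{A3} give $|g(y,\tau,u)|\le C|u|$ so that $\tfrac1\varepsilon\left\Vert g(\cdot,u)\right\Vert _{H}\le\tfrac{C}{\varepsilon}|u|$; with Poincaré's inequality this yields $\left\Vert A(t)u\right\Vert _{V'}\le c(\varepsilon)\left\Vert u\right\Vert $. (ii) Coercivity: using the ellipticity in \textbf{A1}, $\langle\Div(aDu),u\rangle=-\int_Q aDu\cdot Du\,dx\le-\Lambda\left\Vert u\right\Vert ^2$, the bound on $g$, and the growth $\left\Vert M(\cdot,u)\right\Vert _{(L^2(Q))^m}^2\le K(|Q|+|u|^2)$ from \textbf{A5}, one gets $2\langle A(t)u,u\rangle+\left\Vert B(t)u\right\Vert ^2\le-2\Lambda\left\Vert u\right\Vert ^2+(\tfrac{2C}\varepsilon+K)|u|^2+K|Q|$. (iii) Monotonicity: for $u,v\in V$ the principal part contributes $-2\int_Q aD(u-v)\cdot D(u-v)\,dx\le0$, the Lipschitz bound $|\partial_u g|\le C$ gives $\tfrac2\varepsilon\int_Q(g(\cdot,u)-g(\cdot,v))(u-v)\,dx\le\tfrac{2C}\varepsilon|u-v|^2$, and the Lipschitz condition in \textbf{A5} gives $\left\Vert B(t)u-B(t)v\right\Vert ^2\le K|u-v|^2$, whence $2\langle A(t)u-A(t)v,u-v\rangle+\left\Vert B(t)u-B(t)v\right\Vert ^2\le(\tfrac{2C}\varepsilon+K)|u-v|^2$. (iv) Hemicontinuity of $s\mapsto\langle A(t)(u+sv),w\rangle$ follows from continuity of $g$ in its last variable together with dominated convergence, the principal part being linear. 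Progressive measurability of $(t,\omega)\mapsto A(t)u,B(t)u$ is automatic since the coefficients are deterministic and Borel in $(x,t)$ and the filtration is the natural one of $W$.

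With (i)--(iv) in hand, the general theorem on monotone coercive SPDEs produces a unique (up to modification) process $u_\varepsilon\in L^2(\Omega\times[0,T];V)$ that is $\mathcal F^t$-progressively measurable, has $\mathbb P$-a.s. continuous paths in $H$, and satisfies (\ref{4.2}) for every $v\in H_0^1(Q)$ and a.e. $(\omega,t)$, which is exactly the assertion. For uniqueness I would apply Itô's formula to $t\mapsto|u_\varepsilon(t)-\overline u_\varepsilon(t)|^2$ for two solutions, use the monotonicity estimate in (iii) to control the drift and bracket terms, take expectations so that the stochastic integral drops out, and close with Gronwall's lemma to obtain $\mathbb E|u_\varepsilon(t)-\overline u_\varepsilon(t)|^2=0$ for all $t$; pathwise continuity of $u_\varepsilon-\overline u_\varepsilon$ in $H^{-1}(Q)$ then upgrades this to the stated $\mathbb P$-almost sure, for-all-$t$ statement.

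Finally, the main obstacle. None of the individual estimates is hard once \textbf{A1}--\textbf{A5} are unpacked; the delicate points are rather (a) the rigorous justification of the Itô formula in the Gelfand triple, which requires knowing a priori that the candidate solution lies in $L^2(\Omega\times[0,T];V)$ with $du_\varepsilon$ splitting into a $V'$-valued drift and a martingale part so that $t\mapsto|u_\varepsilon(t)|^2$ is a genuine semimartingale, and (b) if one builds the solution by Galerkin approximation instead of citing the abstract theorem, the passage to the limit in the nonlinear terms $g(\cdot,u_\varepsilon)$ and $M(\cdot,u_\varepsilon)$, where one must exploit their Lipschitz character (\textbf{A2}, \textbf{A5}) to identify the weak limits with $g(\cdot,u_0)$ and $M(\cdot,u_0)$ and not merely with abstract weak limits of the composed maps. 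Both are standard in the monotone SPDE literature, which is precisely why one may simply invoke the cited results of Krylov and Pardoux.
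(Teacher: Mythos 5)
Your proposal is correct and follows the same route as the paper, which offers no proof of its own but simply attributes the result to the variational theory of Krylov--Rozovskii and Pardoux; your contribution is to verify explicitly the growth, coercivity, weak monotonicity and hemicontinuity hypotheses of that theory from \textbf{A1}--\textbf{A5}, and those verifications (in particular the use of $|g(y,\tau,u)|\leq C|u|$ from \textbf{A2}--\textbf{A3} to absorb the singular factor $1/\varepsilon$ into an $\varepsilon$-dependent constant, harmless since $\varepsilon$ is fixed) are sound. The uniqueness argument via It\^{o}'s formula, the monotonicity estimate and Gronwall's lemma is likewise the standard one implicit in the cited references.
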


\subsection{A priori estimates and tightness property of $u_{\protect%
\varepsilon }$}

We begin this section by obtaining crucial uniform a priori energy estimates
for the process $u_{\varepsilon }$.

\begin{lemma}
\label{l4.1}Under assumptions \textbf{A1}-\textbf{A5} the following
estimates hold true for $1\leq p<\infty $: 
\begin{equation}
\mathbb{E}\sup_{0\leq t\leq T}\left| u_{\varepsilon }(t)\right| ^{p}\leq C,
\label{4.3}
\end{equation}%
\begin{equation}
\mathbb{E}\left( \int_{0}^{T}\left\| u_{\varepsilon }(t)\right\|
^{2}dt\right) ^{p/2}\leq C  \label{4.4}
\end{equation}%
where $C$ is a positive constant which does not depend on $\varepsilon $.
\end{lemma}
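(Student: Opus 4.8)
The plan is to derive the uniform a priori estimates \eqref{4.3} and \eqref{4.4} by the standard stochastic energy method: apply the It\^o formula to the function $x\mapsto|x|^{2}$ applied to $u_{\varepsilon}$, then raise the resulting identity to the appropriate power, take expectations, and close the estimate by Gr\"onwall's inequality after controlling the martingale (stochastic integral) term via the Burkholder--Davis--Gundy (BDG) inequality. First I would test \eqref{4.2} with $v=u_{\varepsilon}(t)$, or more precisely apply It\^o's formula to $|u_{\varepsilon}(t)|^{2}$, obtaining
\begin{equation*}
|u_{\varepsilon}(t)|^{2}+2\int_{0}^{t}\Bigl(a^{\varepsilon}Du_{\varepsilon},Du_{\varepsilon}\Bigr)d\tau
=|u^{0}|^{2}+\frac{2}{\varepsilon}\int_{0}^{t}\bigl(g^{\varepsilon}(u_{\varepsilon}),u_{\varepsilon}\bigr)d\tau
+2\int_{0}^{t}\bigl(M^{\varepsilon}(u_{\varepsilon}),u_{\varepsilon}\bigr)dW
+\int_{0}^{t}\|M^{\varepsilon}(u_{\varepsilon})\|^{2}d\tau,
\end{equation*}
where the last term is the It\^o correction (the quadratic variation of the noise). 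Assumption \textbf{A1} makes the diffusion term on the left coercive, bounded below by $2\Lambda\int_{0}^{t}\|u_{\varepsilon}\|^{2}d\tau$, which will produce \eqref{4.4}.

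The key technical point is the treatment of the singular reaction term $\frac{1}{\varepsilon}g^{\varepsilon}(u_{\varepsilon})$, which a priori blows up as $\varepsilon\to0$. Here I would crucially exploit the centering condition $M_{y}(g(\cdot,\tau,u))=0$ from \textbf{A4} together with the representation $g=\Delta_{y}R$ from \eqref{2.1} and $G=D_{y}R$. The idea is that $\frac{1}{\varepsilon}g(x/\varepsilon,t/\varepsilon^{2},u)=\varepsilon\,\Div_{x}\bigl[G(x/\varepsilon,t/\varepsilon^{2},u)\bigr]-$(cross terms from differentiating in $u$), so that after integrating by parts against $u_{\varepsilon}$ the apparently singular $1/\varepsilon$ factor is absorbed, leaving terms controlled by the bounds $|G(y,\tau,u)|\le C|u|$ and $|\partial_{u}G|\le C$ of \eqref{4.0}. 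More concretely, writing $\frac1\varepsilon g^\varepsilon(u_\varepsilon)=\Div\!\bigl(G^\varepsilon(u_\varepsilon)\bigr)-\frac1\varepsilon(\partial_uG)^\varepsilon(u_\varepsilon)Du_\varepsilon\cdot$(something); the precise bookkeeping yields a term $\int_{0}^{t}\bigl|G^{\varepsilon}(u_{\varepsilon})\bigr|\,|Du_{\varepsilon}|\,d\tau\le C\int_{0}^{t}|u_{\varepsilon}|\,\|u_{\varepsilon}\|\,d\tau$, which by Young's inequality splits into a piece absorbable by the coercive gradient term and a piece of the form $C\int_{0}^{t}|u_{\varepsilon}|^{2}d\tau$. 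This is the step I expect to be the main obstacle, since it requires handling the microscopic time scale $t/\varepsilon^{2}$ correctly and making the integration by parts rigorous for the process $u_{\varepsilon}$.

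Once the reaction term is tamed, the remaining estimate is routine. For \eqref{4.3} with $p=2$ I would take the supremum over $t\in[0,T]$ before taking expectation, estimate the martingale term by BDG,
\begin{equation*}
\mathbb{E}\sup_{0\le t\le T}\Bigl|\int_{0}^{t}\bigl(M^{\varepsilon}(u_{\varepsilon}),u_{\varepsilon}\bigr)dW\Bigr|
\le C\,\mathbb{E}\Bigl(\int_{0}^{T}|u_{\varepsilon}|^{2}\,\|M^{\varepsilon}(u_{\varepsilon})\|^{2}\,d\tau\Bigr)^{1/2}
\le \tfrac12\mathbb{E}\sup_{0\le t\le T}|u_{\varepsilon}|^{2}+C\,\mathbb{E}\int_{0}^{T}\bigl(1+|u_{\varepsilon}|^{2}\bigr)d\tau,
\end{equation*}
using the linear-growth bound $\|M^{\varepsilon}(u)\|^{2}\le K(1+|u|^{2})$ from \textbf{A5} and absorbing the $\frac12\sup$ term into the left-hand side. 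The It\^o correction term is controlled by the same growth bound. After these manipulations the inequality reads $\mathbb{E}\sup_{0\le t\le T}|u_{\varepsilon}|^{2}+\Lambda\,\mathbb{E}\int_{0}^{T}\|u_{\varepsilon}\|^{2}d\tau\le C\bigl(1+\mathbb{E}\int_{0}^{T}\sup_{0\le s\le\tau}|u_{\varepsilon}(s)|^{2}d\tau\bigr)$, and Gr\"onwall's lemma closes both \eqref{4.3} and \eqref{4.4} for $p=2$, with the constant $C$ manifestly independent of $\varepsilon$ because all the bounds above are $\varepsilon$-uniform. For general $1\le p<\infty$ I would raise the energy identity to the power $p/2$ before taking expectation and repeat the argument, using the $p$-th order BDG inequality and H\"older where needed; the case $1\le p<2$ follows from $p=2$ by Jensen/H\"older, so the substantive work is for $p\ge2$.
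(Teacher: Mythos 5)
Your proposal is correct and follows essentially the same route as the paper: It\^o's formula for $|u_{\varepsilon}|^{2}$, the divergence-form rewriting $\frac{1}{\varepsilon}g^{\varepsilon}(u_{\varepsilon})=\Div G^{\varepsilon}(u_{\varepsilon})-(\partial_{u}G)^{\varepsilon}(u_{\varepsilon})\cdot Du_{\varepsilon}$ coming from $g=\Delta_{y}R$, $G=D_{y}R$ (note that no extra powers of $\varepsilon$ appear in either term, unlike in your two intermediate displays --- your final bound $C|u_{\varepsilon}|\,\|u_{\varepsilon}\|$ is nevertheless the correct one), then BDG and Gr\"onwall for $p=2$. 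For higher moments the paper applies It\^o directly to $|u_{\varepsilon}|^{p}$ to get \eqref{4.3} and raises the $p=2$ inequality to the power $p/2$ to get \eqref{4.4}, which matches your plan up to that minor variation.
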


\begin{proof}
Thanks to \cite{KRYLOV} or \cite{pardoux} $u_{\varepsilon }\in \mathcal{C}%
(0,T;L^{2}(Q))$ almost surely and $u_{\varepsilon }\in L^{2}(\Omega \times
\lbrack 0,T];H_{0}^{1}(Q))$, then we may apply It\^{o}'s formula to $%
|u_{\varepsilon }(t)|^{2}$ and we get%
\begin{eqnarray}
d|u_{\varepsilon }(t)|^{2} &=&-2\left( a\left( \frac{x}{\varepsilon },\frac{t%
}{\varepsilon ^{2}}\right) Du_{\varepsilon }(t),Du_{\varepsilon }(t)\right)
dt+\frac{2}{\varepsilon }\left( g\left( \frac{x}{\varepsilon },\frac{t}{%
\varepsilon ^{2}},u_{\varepsilon }(t)\right) ,u_{\varepsilon }(t)\right) dt 
\notag \\
&&+\sum_{k=1}^{m}|M_{k}^{\varepsilon }(u_{\varepsilon
}(t))|^{2}dt+2(M^{\varepsilon }(u_{\varepsilon }(t)),u_{\varepsilon }(t))dW
\label{4.5}
\end{eqnarray}%
where we set $M^{\varepsilon }(u_{\varepsilon })(x,t,\omega
)=M(x/\varepsilon ,t/\varepsilon ^{2},u_{\varepsilon }(x,t,\omega ))$.
Thanks to condition \textbf{A1} we have%
\begin{eqnarray}
d\left| u_{\varepsilon }(t)\right| ^{2}+2\Lambda \left\| u_{\varepsilon
}(t)\right\| ^{2}dt &\leq &\frac{2}{\varepsilon }(g(\frac{x}{\varepsilon },%
\frac{t}{\varepsilon ^{2}},u_{\varepsilon }(t)),u_{\varepsilon
}(t))dt+\sum_{k=1}^{m}\left| M_{k}^{\varepsilon }\left( u_{\varepsilon
}(t)\right) \right| ^{2}dt  \notag \\
&&+2(M^{\varepsilon }\left( u_{\varepsilon }(t)\right) ,u_{\varepsilon
}(t))dW.  \label{4}
\end{eqnarray}%
To deal with the first term of the right hand side of \eqref{4}, we use the
following representation 
\begin{equation}
\frac{1}{\varepsilon }g\left( \frac{x}{\varepsilon },\frac{t}{\varepsilon
^{2}},u_{\varepsilon }\right) =\Div G\left( \frac{x}{\varepsilon },\frac{t}{%
\varepsilon ^{2}},u_{\varepsilon }\right) -\partial _{u}G\left( \frac{x}{%
\varepsilon },\frac{t}{\varepsilon ^{2}},u_{\varepsilon }\right) \cdot
Du_{\varepsilon }  \label{4d}
\end{equation}%
which can be checked by straightforward computation. From this we see that 
\begin{equation*}
\frac{1}{\varepsilon }\left( g\left( \frac{x}{\varepsilon },\frac{t}{%
\varepsilon ^{2}},u_{\varepsilon }(t)\right) ,u_{\varepsilon }(t)\right)
=\left( G\left( \frac{x}{\varepsilon },\frac{t}{\varepsilon ^{2}}%
,u_{\varepsilon }\right) ,Du_{\varepsilon }(t)\right) -\left( \partial
_{u}G\left( \frac{x}{\varepsilon },\frac{t}{\varepsilon ^{2}},u_{\varepsilon
}(t)\right) \cdot Du_{\varepsilon }(t),u_{\varepsilon }(t)\right) ,
\end{equation*}%
from which we infer that 
\begin{equation}
\frac{2}{\varepsilon }\left( g\left( \frac{x}{\varepsilon },\frac{t}{%
\varepsilon ^{2}},u_{\varepsilon }(t)\right) ,u_{\varepsilon }(t)\right)
\leq C\left| u_{\varepsilon }(t)\right| \left\| u_{\varepsilon }(t)\right\|
+C\left| u_{\varepsilon }(t)\right| \left\| u_{\varepsilon }(t)\right\| .
\label{5}
\end{equation}%
Here we have used the assumptions \textbf{A2}-\textbf{A4}. Thanks to \textbf{%
A5} the second term of the right hand side of \eqref{4} can be estimated as 
\begin{equation}
\sum_{k=1}^{m}\left| M_{k}^{\varepsilon }(u_{\varepsilon }(t))\right|
^{2}\leq C(1+\left| u_{\varepsilon }(t)\right| ^{2}).  \label{6}
\end{equation}%
Using \eqref{5} and \eqref{6} in \eqref{4} and integrating over $0\leq \tau
\leq t$ both sides of the resulting inequality yields%
\begin{eqnarray}
\left| u_{\varepsilon }(t)\right| ^{2}+2\Lambda \int_{0}^{t}\left\|
u_{\varepsilon }(\tau )\right\| ^{2}d\tau &\leq &\left| u^{0}\right|
^{2}+C\int_{0}^{t}\left| u_{\varepsilon }(\tau )\right| \left\|
u_{\varepsilon }(\tau )\right\| d\tau +C(T)  \label{7} \\
&&+C\int_{0}^{t}\left| u_{\varepsilon }(\tau )\right| ^{2}d\tau
+2\int_{0}^{t}(M^{\varepsilon }(u_{\varepsilon }(\tau )),u_{\varepsilon
}(\tau ))dW.  \notag
\end{eqnarray}%
By Cauchy's inequality we have 
\begin{eqnarray}
\left| u_{\varepsilon }(t)\right| ^{2}+2\Lambda \int_{0}^{t}\left\|
u_{\varepsilon }(\tau )\right\| ^{2}d\tau &\leq &\left| u^{0}\right|
^{2}+C(\delta )\int_{0}^{t}\left| u_{\varepsilon }(\tau )\right| ^{2}d\tau
+\delta \int_{0}^{t}||u_{\varepsilon }(\tau )||^{2}d\tau +C(T)  \notag
\label{8} \\
&&+C\int_{0}^{t}\left| u_{\varepsilon }(\tau )\right| ^{2}d\tau
+2\int_{0}^{t}(M^{\varepsilon }\left( u_{\varepsilon }(\tau )\right)
,u_{\varepsilon }(\tau ))dW,  \notag
\end{eqnarray}%
where $\delta $ is an arbitrary positive constant. We choose $\delta
=\Lambda $ so that we see from \eqref{8} that 
\begin{eqnarray}
\left| u_{\varepsilon }(t)\right| ^{2}+\Lambda \int_{0}^{t}\left\|
u_{\varepsilon }(\tau )\right\| ^{2}d\tau &\leq &\left| u^{0}\right|
^{2}+C(T)+C\int_{0}^{t}\left| u_{\varepsilon }(\tau )\right| ^{2}d\tau 
\notag \\
&&+2\int_{0}^{t}(M^{\varepsilon }\left( u_{\varepsilon }(\tau )\right)
,u_{\varepsilon }(\tau ))dW.  \label{9}
\end{eqnarray}%
In \eqref{9} we take the $\sup $ over $0\leq \tau \leq t$ and the
mathematical expectation. This procedure implies that 
\begin{eqnarray}
\mathbb{E}\sup_{0\leq \tau \leq t}\left| u_{\varepsilon }(\tau )\right|
^{2}+\Lambda \mathbb{E}\int_{0}^{t}\left\| u_{\varepsilon }(\tau )\right\|
^{2}d\tau &\leq &\left| u^{0}\right| ^{2}+C(T)+C\mathbb{E}\int_{0}^{t}\left|
u_{\varepsilon }(\tau )\right| ^{2}d\tau  \notag  \label{10} \\
&&+2\mathbb{E}\sup_{0\leq s\leq t}\left| \int_{0}^{s}(M^{\varepsilon }\left(
u_{\varepsilon }(\tau )\right) ,u_{\varepsilon }(\tau ))dW\right| .  \notag
\end{eqnarray}%
By Burkh\"{o}lder-Davis-Gundy's inequality we have that 
\begin{eqnarray*}
2\mathbb{E}\sup_{0\leq s\leq t}\left| \int_{0}^{s}(M^{\varepsilon
}(u_{\varepsilon }(\tau )),u_{\varepsilon }(\tau ))dW\right| &\leq &6\mathbb{%
E}\left( \int_{0}^{t}(M^{\varepsilon }(u_{\varepsilon }(\tau
)),u_{\varepsilon }(\tau ))^{2}d\tau \right) ^{1/2} \\
&\leq &6\mathbb{E}\left( \sup_{0\leq s\leq t}\left| u_{\varepsilon
}(s)\right| \left( \int_{0}^{t}\left| M^{\varepsilon }\left( u_{\varepsilon
}(\tau )\right) \right| ^{2}d\tau \right) ^{1/2}\right) .
\end{eqnarray*}%
By Cauchy's inequality, 
\begin{equation*}
2\mathbb{E}\sup_{0\leq s\leq t}\left| \int_{0}^{s}(M^{\varepsilon }\left(
u_{\varepsilon }(\tau )\right) ,u_{\varepsilon }(\tau ))dW\right| \leq \frac{%
1}{2}\mathbb{E}\sup_{0\leq s\leq t}\left| u_{\varepsilon }(s)\right| ^{2}+18%
\mathbb{E}\int_{0}^{t}\left| M^{\varepsilon }(u_{\varepsilon }(\tau
))\right| ^{2}d\tau .
\end{equation*}%
By using condition \textbf{A5} we see from this last inequality that 
\begin{equation*}
2\mathbb{E}\sup_{0\leq s\leq t}\left| \int_{0}^{s}(M^{\varepsilon }\left(
u_{\varepsilon }(\tau )\right) ,u_{\varepsilon }(\tau ))dW\right| \leq \frac{%
1}{2}\mathbb{E}\sup_{0\leq s\leq t}\left| u_{\varepsilon }(s)\right|
^{2}+C(T)+C\mathbb{E}\int_{0}^{t}\left| u_{\varepsilon }(\tau )\right|
^{2}d\tau .
\end{equation*}%
From this and \eqref{10} we derive that 
\begin{equation}
\mathbb{E}\sup_{0\leq \tau \leq t}\left| u_{\varepsilon }(\tau )\right|
^{2}+\Lambda \mathbb{E}\int_{0}^{t}\left\| u_{\varepsilon }(\tau )\right\|
^{2}d\tau \leq C(\left| u^{0}\right| ^{2},T)+C\mathbb{E}\int_{0}^{t}\left|
u_{\varepsilon }(\tau )\right| ^{2}d\tau .  \label{11}
\end{equation}%
Now it follows from Gronwall's inequality that 
\begin{equation}
\mathbb{E}\sup_{0\leq t\leq T}\left| u_{\varepsilon }(t)\right| ^{2}\leq C,
\label{11d}
\end{equation}%
where $C>0$ is independent of $\varepsilon $. Thanks to this last estimate
we derive from \eqref{11} that 
\begin{equation}
\mathbb{E}\int_{0}^{T}||u_{\varepsilon }(\tau )||^{2}d\tau \leq C.
\label{11dd}
\end{equation}%
As above $C>0$ does not depend on $\varepsilon $. Now let $p>2$. Thanks to It%
\^{o}'s formula we derive from (\ref{4.5}) that 
\begin{eqnarray*}
d|u_{\varepsilon }(t)|^{p} &=&-p\left( a\left( \frac{x}{\varepsilon },\frac{t%
}{\varepsilon ^{2}}\right) Du_{\varepsilon }(t),Du_{\varepsilon }(t)\right)
\left| u_{\varepsilon }(t)\right| ^{p-2}dt \\
&&+\frac{p}{\varepsilon }(g(\frac{x}{\varepsilon },\frac{t}{\varepsilon ^{2}}%
,u_{\varepsilon }(t)),u_{\varepsilon }(t))\left| u_{\varepsilon }(t)\right|
^{p-2}dt \\
&&+\frac{p}{2}\left| u_{\varepsilon }(t)\right| ^{p-2}\sum_{k=1}^{m}\left|
M_{k}^{\varepsilon }(u_{\varepsilon }(t))\right| ^{2}dt+\frac{p(p-2)}{2}%
\left| u_{\varepsilon }(t)\right| ^{p-4}(M^{\varepsilon }(u_{\varepsilon
}(t)),u_{\varepsilon }(t))^{2}dt \\
&&+p\left| u_{\varepsilon }(t)\right| ^{p-2}(M^{\varepsilon }(u_{\varepsilon
}(t)),u_{\varepsilon }(t))dW.
\end{eqnarray*}%
Thanks to \textbf{A1}, \eqref{4d}, \eqref{5} and \eqref{6} we have that 
\begin{eqnarray}
d\left| u_{\varepsilon }(t)\right| ^{p}+p\Lambda \left| u_{\varepsilon
}(t)\right| ^{p-2}\left\| u_{\varepsilon }(t)\right\| ^{2}dt &\leq &pC\left|
u_{\varepsilon }(t)\right| ^{p-1}\left\| u_{\varepsilon }(t)\right\| dt 
\notag \\
&&+\frac{p}{2}C\left| u_{\varepsilon }(t)\right| ^{p-2}(1+\left|
u_{\varepsilon }(t)\right| ^{2})dt  \notag \\
&&+\frac{p(p-2)}{4}\left| u_{\varepsilon }(t)\right| ^{p-4}(M^{\varepsilon
}(u_{\varepsilon }(t)),u_{\varepsilon }(t))^{2}dt  \notag \\
&&+p\left| u_{\varepsilon }(t)\right| ^{p-2}(M^{\varepsilon }(u_{\varepsilon
}(t)),u_{\varepsilon }(t))dW.  \label{12}
\end{eqnarray}%
Thanks to \textbf{A5} we get form easy calculations that 
\begin{equation}
\left| u_{\varepsilon }(t)\right| ^{p-4}(M^{\varepsilon }(u_{\varepsilon
}(t)),u_{\varepsilon }(t))^{2}\leq C(p)\left| u_{\varepsilon }(t)\right|
^{p}.  \label{13}
\end{equation}%
Using \eqref{13} in \eqref{12} yields 
\begin{eqnarray}
d|u_{\varepsilon }(t)|^{p}+p\Lambda \left| u_{\varepsilon }(t)\right|
^{p-2}\left\| u_{\varepsilon }(t)\right\| ^{2}dt &\leq &pC\left|
u_{\varepsilon }(t)\right| ^{p-1}\left\| u_{\varepsilon }(t)\right\|
dt+C(p)\left| u_{\varepsilon }(t)\right| ^{p}dt  \notag \\
&&+p\left| u_{\varepsilon }(t)\right| ^{p-2}(M^{\varepsilon }(u_{\varepsilon
}(t)),u_{\varepsilon }(t))dW,  \label{16}
\end{eqnarray}%
which is equivalent to 
\begin{eqnarray}
\left| u_{\varepsilon }(t)\right| ^{p}+p\Lambda \int_{0}^{t}\left|
u_{\varepsilon }(\tau )\right| ^{p-2}\left\| u_{\varepsilon }(\tau )\right\|
^{2}d\tau &\leq &p\int_{0}^{t}\left| u_{\varepsilon }(\tau )\right|
^{p-2}(M^{\varepsilon }\left( u_{\varepsilon }(\tau )\right) ,u_{\varepsilon
}(\tau ))dW  \notag \\
&&+\left| u^{0}\right| ^{p}+C(p)\int_{0}^{t}\left| u_{\varepsilon }(\tau
)\right| ^{p}d\tau  \notag \\
&&+C(p)\int_{0}^{t}\left| u_{\varepsilon }(\tau )\right| ^{p-1}\left\|
u_{\varepsilon }(\tau )\right\| d\tau .  \label{17}
\end{eqnarray}%
Due to Cauchy's inequality the second term of the right hand side of %
\eqref{17} can be estimated as follows 
\begin{equation*}
C(p)\int_{0}^{t}\left| u_{\varepsilon }(\tau )\right| ^{p-1}\left\|
u_{\varepsilon }(\tau )\right\| d\tau \leq C(p,\delta )\int_{0}^{t}\left|
u_{\varepsilon }(\tau )\right| ^{p}d\tau +\delta \int_{0}^{t}\left|
u_{\varepsilon }(\tau )\right| ^{p-2}\left\| u_{\varepsilon }(\tau )\right\|
^{2}d\tau ,
\end{equation*}%
where $\delta >0$ is arbitrary. Choosing $\delta =p\Lambda /2$ in the last
inequality and using the resulting estimate in \eqref{17} implies that 
\begin{eqnarray*}
\left| u_{\varepsilon }(t)\right| ^{p}+(p\Lambda /2)\int_{0}^{t}\left|
u_{\varepsilon }(\tau )\right| ^{p-2}\left\| u_{\varepsilon }(\tau )\right\|
^{2}d\tau &\leq &\left| u^{0}\right| ^{p}+C(p,\Lambda )\int_{0}^{t}\left|
u_{\varepsilon }(\tau )\right| ^{p}d\tau \\
&&+p\int_{0}^{t}\left| u_{\varepsilon }(\tau )\right| ^{p-2}(M^{\varepsilon
}\left( u_{\varepsilon }(\tau )\right) ,u_{\varepsilon }(\tau ))dW.
\end{eqnarray*}%
Taking the supremum over $0\leq \tau \leq t$ and the mathematical
expectation to both sides of this last inequality yields 
\begin{equation}
\begin{array}{l}
\mathbb{E}\sup_{0\leq \tau \leq t}\left| u_{\varepsilon }(\tau )\right|
^{p}+(p\Lambda /2)\mathbb{E}\int_{0}^{t}\left| u_{\varepsilon }(\tau
)\right| ^{p-2}\left\| u_{\varepsilon }(\tau )\right\| ^{2}d\tau \\ 
\leq \left| u^{0}\right| ^{p}+C(p,\Lambda )\mathbb{E}\int_{0}^{t}\left|
u_{\varepsilon }(\tau )\right| ^{p}d\tau +p\mathbb{E}\sup_{0\leq s\leq
t}\left| \int_{0}^{s}\left| u_{\varepsilon }(\tau )\right|
^{p-2}(M^{\varepsilon }(u_{\varepsilon }(\tau )),u_{\varepsilon }(\tau
))dW\right| .%
\end{array}
\label{18}
\end{equation}%
Thanks to Burkh\"{o}lder-Davis-Gundy's inequality we have that 
\begin{equation*}
\begin{array}{l}
p\mathbb{E}\sup_{0\leq s\leq t}\left| \int_{0}^{s}\left| u_{\varepsilon
}(\tau )\right| ^{p-2}(M^{\varepsilon }(u_{\varepsilon }(\tau
)),u_{\varepsilon }(\tau ))dW\right| \\ 
\leq 3p\mathbb{E}\left( \int_{0}^{t}\left| u_{\varepsilon }(\tau )\right|
^{2p-4}(M^{\varepsilon }\left( u_{\varepsilon }(\tau )\right)
,u_{\varepsilon }(\tau ))^{2}d\tau \right) ^{1/2} \\ 
\leq 3p\mathbb{E}\left( \sup_{0\leq \tau \leq t}\left| u_{\varepsilon }(\tau
)\right| ^{p/2}\int_{0}^{t}\left| u_{\varepsilon }(\tau )\right|
^{p-2}\left| M^{\varepsilon }\left( u_{\varepsilon }(\tau )\right) \right|
^{2}d\tau \right) ^{1/2}%
\end{array}%
\end{equation*}%
Thanks to Cauchy's inequality and the assumption \textbf{A5} we get that 
\begin{equation}
\begin{array}{l}
p\mathbb{E}\sup_{0\leq s\leq t}\left| \int_{0}^{s}\left| u_{\varepsilon
}(\tau )\right| ^{p-2}(M^{\varepsilon }(u_{\varepsilon }(\tau
),u_{\varepsilon }(\tau ))dW\right| \\ 
\leq 3p\delta \mathbb{E}\sup_{0\leq \tau \leq t}\left| u_{\varepsilon }(\tau
)\right| ^{p}+C(p,\delta ,T)+C(p,\delta ,T)\mathbb{E}\int_{0}^{t}\left|
u_{\varepsilon }(\tau )\right| ^{p}d\tau ,%
\end{array}
\label{19}
\end{equation}%
where $\delta >0$ is arbitrary. Using \eqref{19} in \eqref{18} yields 
\begin{equation*}
\begin{array}{l}
\mathbb{E}\sup_{0\leq \tau \leq t}\left| u_{\varepsilon }(\tau )\right|
^{p}+(p\Lambda /2)\mathbb{E}\int_{0}^{t}\left| u_{\varepsilon }(\tau
)\right| ^{p-2}\left\| u_{\varepsilon }(\tau )\right\| ^{2}d\tau \\ 
\leq C(p,\Lambda ,\delta ,T)\mathbb{E}\int_{0}^{t}\left| u_{\varepsilon
}(\tau )\right| ^{p}d\tau +\left| u^{0}\right| ^{p}+C(\delta ,T,p)+3p\delta 
\mathbb{E}\sup_{0\leq \tau \leq t}\left| u_{\varepsilon }(\tau )\right| ^{p}.%
\end{array}%
\end{equation*}%
It follows from this and by taking $\delta =1/6p$ that 
\begin{eqnarray*}
\mathbb{E}\sup_{0\leq \tau \leq t}\left| u_{\varepsilon }(\tau )\right|
^{p}+p\Lambda \mathbb{E}\int_{0}^{t}\left| u_{\varepsilon }(\tau )\right|
^{p-2}\left\| u_{\varepsilon }(\tau )\right\| ^{2}d\tau &\leq &\left|
u^{0}\right| ^{p}+C(\delta ,T,p) \\
&&+C(p,\Lambda ,\delta ,T)\mathbb{E}\int_{0}^{t}\left| u_{\varepsilon }(\tau
)\right| ^{p}d\tau .
\end{eqnarray*}%
Gronwall's Lemma implies that 
\begin{equation}
\mathbb{E}\sup_{0\leq \tau \leq T}\left| u_{\varepsilon }(\tau )\right|
^{p}\leq C,  \label{19d}
\end{equation}%
where $C>0$ is independent of $\varepsilon $. From \eqref{9} we see that 
\begin{eqnarray*}
\int_{0}^{t}\left\| u_{\varepsilon }(\tau )\right\| ^{2}d\tau &\leq
&C(\left| u^{0}\right| ^{2},T,\Lambda )+C(\Lambda )\int_{0}^{T}\left|
u_{\varepsilon }(\tau )\right| ^{2}d\tau \\
&&+C(\Lambda )\int_{0}^{t}(M^{\varepsilon }\left( u_{\varepsilon }(\tau
)\right) ,u_{\varepsilon }(\tau ))dW.
\end{eqnarray*}%
Raising both sides of this inequality to the power $p/2$ and taking the
mathematical expectation imply that 
\begin{eqnarray*}
\mathbb{E}\left( \int_{0}^{t}\left\| u_{\varepsilon }(\tau )\right\|
^{2}\right) ^{p/2}d\tau &\leq &C(\Lambda ,p)\mathbb{E}\left(
\int_{0}^{t}(M^{\varepsilon }\left( u_{\varepsilon }(\tau )\right)
,u_{\varepsilon }(\tau ))dW\right) ^{p/2} \\
&&+C(\left| u^{0}\right| ^{2},T,\Lambda ,p).
\end{eqnarray*}%
Here we have used \eqref{19d} to deal with the term $C(\Lambda ,p,T)\mathbb{E%
}\sup_{0\leq t\leq T}\left| u_{\varepsilon }(t)\right| ^{p}$. It follows
from martingale inequality and some straightforward computations that 
\begin{equation}
\mathbb{E}\left( \int_{0}^{T}\left\| u_{\varepsilon }(t)\right\|
^{2}dt\right) ^{p/2}\leq C.  \label{20}
\end{equation}%
The estimates \eqref{11d}, \eqref{11dd}, \eqref{19d} and \eqref{20} complete
the proof of the lemma.
\end{proof}

\begin{lemma}
\label{l4.2}There exists a constant $C>0$ such that 
\begin{equation*}
\mathbb{E}\sup_{|\theta |\leq \delta }\int_{0}^{T}\left| u_{\varepsilon
}(t+\theta )-u_{\varepsilon }(t)\right| _{H^{-1}(Q)}^{2}dt\leq C\delta ,
\end{equation*}%
for any $\varepsilon $, and $\delta \in (0,1)$. Here $u_{\varepsilon }(t)$
is extended to zero outside the interval $[0,T]$.
\end{lemma}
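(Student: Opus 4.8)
The plan is to start from the weak integral form \eqref{4.2} of the equation, which yields, whenever $t$ and $t+\theta$ both lie in $[0,T]$,
\[
u_\varepsilon(t+\theta)-u_\varepsilon(t)=\int_t^{t+\theta}\left(\Div(a^\varepsilon Du_\varepsilon)+\tfrac1\varepsilon g^\varepsilon(u_\varepsilon)\right)ds+\int_t^{t+\theta}M^\varepsilon(u_\varepsilon)\,dW,
\]
understood as an identity in $H^{-1}(Q)$, where $a^\varepsilon,g^\varepsilon$ denote the oscillating coefficients. The first essential step is to remove the $1/\varepsilon$ singularity in the drift by invoking the representation \eqref{4d}, namely $\tfrac1\varepsilon g^\varepsilon(u_\varepsilon)=\Div G^\varepsilon(u_\varepsilon)-\partial_uG^\varepsilon(u_\varepsilon)\cdot Du_\varepsilon$; this is legitimate precisely because of the centering $M_y(g(\cdot,\tau,u))=0$ in \textbf{A4} and the resulting corrector $G=D_yR$. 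After substitution the drift reads $F(s):=\Div\big(a^\varepsilon Du_\varepsilon+G^\varepsilon(u_\varepsilon)\big)-\partial_uG^\varepsilon(u_\varepsilon)\cdot Du_\varepsilon$, and using \textbf{A1}, the bounds \eqref{4.0}, the inequality $\|\Div\Phi\|_{H^{-1}(Q)}\le\|\Phi\|_{L^2(Q)}$, and the embedding $L^2(Q)\hookrightarrow H^{-1}(Q)$, one gets the $\varepsilon$-uniform pointwise-in-time bound $|F(s)|_{H^{-1}(Q)}\le C(\|u_\varepsilon(s)\|+|u_\varepsilon(s)|)$.

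Since $u_\varepsilon$ is extended by zero, I would split $\int_0^T dt$ into the bulk $[\delta,T-\delta]$ and the two boundary strips of total length $2\delta$. On the strips the integral representation may fail, but there the integrand is crudely bounded by $4\sup_{s}|u_\varepsilon(s)|^2_{H^{-1}}\le C\sup_s|u_\varepsilon(s)|^2$, so the strip contributes at most $C\delta\,\mathbb{E}\sup_s|u_\varepsilon|^2\le C\delta$ by \eqref{4.3}. On the bulk, every $t+\theta$ with $|\theta|\le\delta$ stays in $[0,T]$, so the representation holds. For the deterministic part, using $|\int_t^{t+\theta}F\,ds|_{H^{-1}}\le\int_{t-\delta}^{t+\delta}|F|_{H^{-1}}\,ds$ and Cauchy--Schwarz in time gives $|\int_t^{t+\theta}F\,ds|^2_{H^{-1}}\le C\delta\int_{t-\delta}^{t+\delta}(\|u_\varepsilon\|^2+|u_\varepsilon|^2)\,ds$; the right-hand side is already $\theta$-free, so taking $\sup_\theta$ is free. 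Integrating in $t$, applying Fubini (each $s$ is covered by an interval of $t$'s of length $\le2\delta$) and taking expectation, the estimates \eqref{4.3}--\eqref{4.4} yield a bound of order $\delta^2\le\delta$.

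The delicate term is the stochastic integral $N(t,\theta)=\int_t^{t+\theta}M^\varepsilon(u_\varepsilon)\,dW$, where the outer $\sup_{|\theta|\le\delta}$ interacts with the It\^o integral. The key trick is to pull the supremum inside, $\sup_\theta\int_0^T(\cdots)\,dt\le\int_0^T\sup_\theta(\cdots)\,dt$, so that it suffices to bound $\int_0^T\mathbb{E}\sup_{|\theta|\le\delta}|N(t,\theta)|^2_{H^{-1}}\,dt$. For fixed $t$ I treat $\theta\ge0$ and $\theta\le0$ separately: for $\theta\in[0,\delta]$, $\sigma\mapsto\int_t^\sigma M^\varepsilon\,dW$ is a forward martingale, so the Burkh\"older--Davis--Gundy inequality for Hilbert-space valued martingales applies directly; for $\theta\in[-\delta,0]$ I rewrite $N(t,\theta)$ through the forward martingale $r\mapsto\int_{t-\delta}^r M^\varepsilon\,dW$ and again apply BDG. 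Together with the growth bound $\sum_k|M_k^\varepsilon(u_\varepsilon)|^2\le K(1+|u_\varepsilon|^2)$ from \textbf{A5} and $|\cdot|_{H^{-1}}\le C|\cdot|_{L^2}$, this gives $\mathbb{E}\sup_{|\theta|\le\delta}|N(t,\theta)|^2_{H^{-1}}\le C\,\mathbb{E}\int_{t-\delta}^{t+\delta}(1+|u_\varepsilon|^2)\,ds$; integrating in $t$ and using Fubini converts the double integral into $2\delta\,\mathbb{E}\int_0^T(1+|u_\varepsilon|^2)\,ds\le C\delta$ by \eqref{4.3}.

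Summing the three contributions (boundary strips, deterministic bulk, stochastic bulk) gives the claimed bound $C\delta$, uniformly in $\varepsilon$ and in $\delta\in(0,1)$. I expect the main obstacle to be the stochastic term: one cannot bound $\sup_\theta$ of an It\^o integral pointwise in $t$ the way one does for the (domain-monotone) deterministic integral, and the remedy---exchanging $\sup_\theta$ with $\int dt$ and decomposing into forward martingales so that BDG applies for each fixed $t$---is the crux of the argument. A secondary technical point, handled by the bulk/boundary splitting, is that the zero-extension of $u_\varepsilon$ destroys the integral representation within a $\delta$-strip of the endpoints.
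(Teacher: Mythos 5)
Your proposal is correct and follows essentially the same route as the paper: the integral representation in $H^{-1}(Q)$, removal of the $1/\varepsilon$ factor via the corrector identity \eqref{4d}, the $H^{-1}$ bounds \eqref{22}--\eqref{23} for the drift, Cauchy--Schwarz plus Fubini for the deterministic part, and Fubini plus Burkh\"older--Davis--Gundy for the stochastic integral, with estimates \eqref{4.3}--\eqref{4.4} closing the argument. Your treatment is somewhat more explicit than the paper's on two technical points it glosses over (the boundary strips created by the zero-extension, and the forward-martingale decomposition needed to apply BDG for negative $\theta$), but these are refinements of the same proof, not a different one.
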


\begin{proof}
Let $\theta >0$. We have that 
\begin{eqnarray*}
u_{\varepsilon }(t+\theta )-u_{\varepsilon }(t) &=&\int_{t}^{t+\theta }{\Div}%
\left( a\left( \frac{x}{\varepsilon },\frac{\tau }{\varepsilon ^{2}}\right)
Du_{\varepsilon }(\tau )\right) d\tau +\frac{1}{\varepsilon }%
\int_{t}^{t+\theta }g\left( \frac{x}{\varepsilon },\frac{\tau }{\varepsilon
^{2}},u_{\varepsilon }\right) d\tau \\
&&+\int_{t}^{t+\theta }M^{\varepsilon }(u_{\varepsilon }(\tau ))dW,
\end{eqnarray*}%
as an equality of random variables taking values in $H^{-1}(Q)$. It follows
from this that 
\begin{eqnarray}
\left| u_{\varepsilon }(t+\theta )-u_{\varepsilon }(t)\right| _{H^{-1}(Q)}
&\leq &C\theta \int_{t}^{t+\theta }\left| \Div\left( a\left( \frac{x}{%
\varepsilon },\frac{\tau }{\varepsilon ^{2}}\right) Du_{\varepsilon }(\tau
)\right) \right| _{H^{-1}(Q)}^{2}d\tau  \notag \\
&&+C\theta \int_{t}^{t+\theta }\left| \frac{1}{\varepsilon }g\left( \frac{x}{%
\varepsilon },\frac{\tau }{\varepsilon ^{2}},u_{\varepsilon }(\tau )\right)
\right| _{H^{-1}(Q)}^{2}d\tau  \notag \\
&&+\left| \int_{t}^{t+\theta }M^{\varepsilon }\left( u_{\varepsilon }(\tau
)\right) dW\right| ^{2}.  \label{21}
\end{eqnarray}%
Firstly, 
\begin{align*}
\left| \Div\left( a\left( \frac{x}{\varepsilon },\frac{\tau }{\varepsilon
^{2}}\right) Du_{\varepsilon }(\tau )\right) \right| _{H^{-1}(Q)}& =\sup 
_{\substack{ \phi \in H_{0}^{1}(Q)  \\ \left\| \phi \right\| =1}}\left|
\left\langle \Div\left( a\left( \frac{x}{\varepsilon },\frac{t}{\varepsilon
^{2}}\right) Du_{\varepsilon }\right) ,\phi \right\rangle \right| \\
& =\sup_{\substack{ \phi \in H_{0}^{1}(Q)  \\ \left\| \phi \right\| =1}}%
\left| \int_{Q}a\left( \frac{x}{\varepsilon },\frac{t}{\varepsilon ^{2}}%
\right) Du_{\varepsilon }D\phi dx\right|
\end{align*}%
from which we derive that 
\begin{equation}
\left| \Div\left( a\left( \frac{x}{\varepsilon },\frac{t}{\varepsilon ^{2}}%
\right) Du_{\varepsilon }\right) \right| _{H^{-1}(Q)}^{2}\leq C(\Lambda
)\left\| u_{\varepsilon }\right\| ^{2},  \label{22}
\end{equation}%
where the assumption \textbf{A1} was used. Secondly, 
\begin{equation*}
\left| \frac{1}{\varepsilon }g\left( \frac{x}{\varepsilon },\frac{\tau }{%
\varepsilon ^{2}},u_{\varepsilon }\right) \right| _{H^{-1}(Q)}=\sup 
_{\substack{ \phi \in H_{0}^{1}(Q)  \\ \left\| \phi \right\| =1}}\left|
\int_{Q}G\left( \frac{x}{\varepsilon },\frac{t}{\varepsilon ^{2}}%
,u_{\varepsilon }\right) \cdot D\phi dx+\int_{Q}\left( \partial _{u}G\left( 
\frac{x}{\varepsilon },\frac{t}{\varepsilon ^{2}},u_{\varepsilon }\right)
\cdot Du_{\varepsilon }\right) \phi dx\right| .
\end{equation*}%
By using the conditions in \textbf{A4} and Poincar\'{e}'s inequality we get
that 
\begin{equation}
\left| \frac{1}{\varepsilon }g\left( \frac{x}{\varepsilon },\frac{\tau }{%
\varepsilon ^{2}},u_{\varepsilon }\right) \right| _{H^{-1}(Q)}\leq
\sup_{\phi \in H_{0}^{1}(Q),\left\| \phi \right\| =1}(C\left| u_{\varepsilon
}\right| +C\left\| u_{\varepsilon }\right\| \left| \phi \right| )\leq
C\left| u_{\varepsilon }\right| +C\left\| u_{\varepsilon }\right\|
\label{23}
\end{equation}%
Using \eqref{22} and \eqref{23} in \eqref{21} yields 
\begin{equation*}
\left| u_{\varepsilon }(t+\theta )-u_{\varepsilon }(t)\right|
_{H^{-1}(Q)}^{2}\leq C\theta \int_{t}^{t+\theta }\left\| u_{\varepsilon
}(\tau )\right\| ^{2}d\tau +C\theta \int_{t}^{t+\theta }\left|
u_{\varepsilon }(\tau )\right| ^{2}d\tau +\left| \int_{t}^{t+\theta
}M^{\varepsilon }\left( u_{\varepsilon }(\tau )\right) dW\right| ^{2},
\end{equation*}%
which implies that%
\begin{eqnarray*}
\mathbb{E}\int_{0}^{T}\sup_{0\leq \theta \leq \delta }\left| u_{\varepsilon
}(t+\theta )-u_{\varepsilon }(t)\right| _{H^{-1}(Q)}^{2}dt &\leq &C\delta 
\mathbb{E}\int_{0}^{T}\int_{t}^{t+\delta }\left\| u_{\varepsilon }(\tau
)\right\| ^{2}d\tau dt \\
&&+\mathbb{E}\int_{0}^{T}\sup_{0\leq \theta \leq \delta }\left|
\int_{t}^{t+\theta }M^{\varepsilon }\left( u_{\varepsilon }(\tau )\right)
dW\right| ^{2}dt \\
&&+C\delta \mathbb{E}\int_{0}^{T}\int_{t}^{t+\theta }\left| u_{\varepsilon
}(\tau )\right| ^{2}d\tau dt.
\end{eqnarray*}%
Thanks to Lemma \ref{l4.1} we have that 
\begin{equation*}
\mathbb{E}\int_{0}^{T}\sup_{0\leq \theta \leq \delta }\left| u_{\varepsilon
}(t+\theta )-u_{\varepsilon }(t)\right| _{H^{-1}(Q)}^{2}dt\leq C\delta +%
\mathbb{E}\int_{0}^{T}\sup_{0\leq \theta \leq \delta }\left|
\int_{t}^{t+\theta }M^{\varepsilon }\left( u_{\varepsilon }(\tau )\right)
dW\right| ^{2}dt.
\end{equation*}%
Due to Fubini's theorem and Burkh\"{o}lder-Davis-Gundy's inequality we see
from this last estimate that 
\begin{equation*}
\mathbb{E}\int_{0}^{T}\sup_{0\leq \theta \leq \delta }\left| u_{\varepsilon
}(t+\theta )-u_{\varepsilon }(t)\right| _{H^{-1}(Q)}^{2}dt\leq C\delta +%
\mathbb{E}\int_{0}^{T}\int_{t}^{t+\delta }\left| M^{\varepsilon }\left(
u_{\varepsilon }(\tau )\right) \right| ^{2}d\tau dt.
\end{equation*}%
Assumptions \textbf{A5} and Lemma \ref{l4.1} yields that 
\begin{equation*}
\mathbb{E}\int_{0}^{T}\sup_{0\leq \theta \leq \delta }\left| u_{\varepsilon
}(t+\theta )-u_{\varepsilon }(t)\right| _{H^{-1}(Q)}^{2}dt\leq C\delta ,
\end{equation*}%
where $C>0$ does not depend on $\varepsilon $ and $\delta $. By the same
argument, we can show that a similar inequality holds for negative values of 
$\theta $. This completes the proof of the lemma.
\end{proof}

The following compactness result plays a crucial role in the proof of the
tightness of the probability measures generated by the sequence $%
(u_{\varepsilon })_{\varepsilon }$.

\begin{lemma}
\label{l4.3}Let $\mu _{n}$, $\nu _{n}$ two sequences of positive real
numbers which tend to zero as $n\rightarrow \infty $, the injection of 
\begin{equation*}
\begin{array}{l}
D_{\nu _{n},\mu _{n}}:=\{q\in L^{\infty }(0,T;L^{2}(Q))\cap
L^{2}(0,T;H_{0}^{1}(Q)): \\ 
\sup_{n}\frac{1}{\nu _{n}}\sup_{\left| \theta \right| \leq \mu _{n}}\left(
\int_{0}^{T}\left| q(t+\theta )-q(t)\right| _{H^{-1}(Q)}^{2}\right)
^{1/2}<\infty \}%
\end{array}%
\end{equation*}%
in $L^{2}(Q_{T})$ is compact.
\end{lemma}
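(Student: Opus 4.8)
The plan is to equip $D_{\nu_n,\mu_n}$ with the natural norm
$$\|q\|_{D}=\|q\|_{L^\infty(0,T;L^2(Q))}+\|q\|_{L^2(0,T;H_0^1(Q))}+\sup_n\frac{1}{\nu_n}\sup_{|\theta|\le\mu_n}\Bigl(\int_0^T|q(t+\theta)-q(t)|_{H^{-1}(Q)}^2\,dt\Bigr)^{1/2},$$
making it a Banach space continuously embedded in $L^2(Q_T)$, and to prove that this embedding is compact by extracting from an arbitrary bounded sequence $(q_k)_k\subset D_{\nu_n,\mu_n}$ a subsequence converging in $L^2(Q_T)$. The natural tool is the Aubin--Lions--Simon compactness theorem (J.~Simon, \emph{Compact sets in the space $L^p(0,T;B)$}), applied to the Gelfand triple $H_0^1(Q)\hookrightarrow L^2(Q)\hookrightarrow H^{-1}(Q)$, whose first injection is compact by the Rellich--Kondrachov theorem.

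First I would unpack the boundedness of $(q_k)_k$ in $D_{\nu_n,\mu_n}$ into its three ingredients: a uniform bound in $L^\infty(0,T;L^2(Q))$, a uniform bound $\sup_k\|q_k\|_{L^2(0,T;H_0^1(Q))}<\infty$, and a constant $C_0$ with
$$\Bigl(\int_0^T|q_k(t+\theta)-q_k(t)|_{H^{-1}(Q)}^2\,dt\Bigr)^{1/2}\le C_0\,\nu_n\qquad\text{for all }k,\ n,\ \text{and }|\theta|\le\mu_n,$$
where, as in Lemma~\ref{l4.2}, each $q_k$ is extended by zero outside $[0,T]$. The second bound supplies exactly the boundedness in $L^2(0,T;H_0^1(Q))$ required by Simon's theorem.

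The key step, which exploits the specific double-indexed structure of the set, is to deduce a \emph{uniform} modulus of continuity in time measured in $H^{-1}(Q)$. Given $\eta>0$, since $\nu_n\to0$ I would pick $n_0$ with $C_0\nu_{n_0}<\eta$ and set $\delta=\mu_{n_0}>0$; then the displayed inequality with $n=n_0$ gives $\sup_k\|q_k(\cdot+\theta)-q_k\|_{L^2(0,T;H^{-1}(Q))}\le C_0\nu_{n_0}<\eta$ for every $|\theta|\le\delta$. Hence $\sup_k\|q_k(\cdot+\theta)-q_k\|_{L^2(0,T;H^{-1}(Q))}\to0$ as $\theta\to0$, uniformly in $k$, which is precisely the translation hypothesis of the compactness theorem in the weaker space $Y=H^{-1}(Q)$.

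With boundedness in $L^2(0,T;H_0^1(Q))$, the compact embedding $H_0^1(Q)\hookrightarrow L^2(Q)$, and the uniform time-translation estimate in $H^{-1}(Q)$, Simon's theorem yields that $(q_k)_k$ is relatively compact in $L^2(0,T;L^2(Q))=L^2(Q_T)$, producing the desired convergent subsequence and hence the compactness of the injection. I expect the main obstacle to be technical rather than conceptual: matching the hypotheses of the three-space theorem exactly (in particular reconciling the integrals over $(0,T)$ with the extension by zero and with Simon's integration over $(0,T-\theta)$), and, should one prefer a self-contained argument, replacing the citation by an Ehrling-type interpolation inequality $\|v\|_{L^2(Q)}\le\eta\|v\|_{H_0^1(Q)}+C_\eta\|v\|_{H^{-1}(Q)}$ combined with the Kolmogorov--Riesz--Fr\'{e}chet criterion in $L^2(Q_T)$.
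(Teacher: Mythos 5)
Your argument is correct and is exactly the standard Aubin--Lions--Simon compactness argument that the paper does not write out but delegates to the analogous result in \cite{bensoussan2}, invoking Lemmas \ref{l4.1} and \ref{l4.2} only to verify its hypotheses. In particular, your key observation --- that $\nu_n\to 0$ converts the double-indexed seminorm bound into a modulus of continuity of time translations in $L^2(0,T;H^{-1}(Q))$ that is uniform over bounded subsets of $D_{\nu_n,\mu_n}$ --- is precisely the point of the definition of that space, so your write-up supplies in full the proof the paper only cites.
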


The proof, which is similar to the analogous result in \cite{bensoussan2},
follows from the application of Lemmas \ref{l4.1}, \ref{l4.2}. The space $%
D_{\nu _{n},\mu _{n}}$ is a Banach space with the norm 
\begin{eqnarray*}
\left\Vert q\right\Vert _{D_{\nu _{n},\mu _{n}}} &=&\underset{0\leq t\leq T}{%
~\text{ess}\sup }\left\vert q(t)\right\vert +\left( \int_{0}^{T}\left\Vert
q\right\Vert ^{2}dt\right) \\
&&+\sup_{n}\frac{1}{\nu _{n}}\sup_{\left\vert \theta \right\vert \leq \mu
_{n}}\left( \int_{0}^{T}\left\vert q(t+\theta )-q(t)\right\vert
_{H^{-1}(Q)}^{2}\right) ^{1/2}.
\end{eqnarray*}%
Alongside $D_{\nu _{n},\mu _{n}}$, we also consider the space $X_{p,\nu
_{n},\mu _{n}}$, $1\leq p<\infty $, of random variables $\zeta $ endowed
with the norm 
\begin{eqnarray*}
\mathbb{E}||\zeta ||_{X_{p,\nu _{n},\mu _{n}}} &=&\mathbb{E}\text{ess}%
\sup_{0\leq t\leq T}\left\vert \zeta (t)\right\vert ^{p}+\mathbb{E}\left(
\int_{0}^{T}\left\Vert \zeta (t)\right\Vert ^{2}\right) ^{p/2} \\
&&+\mathbb{E}\sup_{n}\frac{1}{\nu _{n}}\sup_{|\theta |\leq \mu _{n}}\left(
\int_{0}^{T}\left\vert \zeta (t+\theta )-\zeta (t)\right\vert
_{H^{-1}}^{2}\right) ^{1/2};
\end{eqnarray*}%
$X_{p,\nu _{n},\mu _{n}}$ is a Banach space.

Combining Lemma \ref{l4.1} and the estimates in Lemma \ref{l4.2} we have

\begin{proposition}
\label{p4.1}For any real number $p\in \lbrack 1,\infty )$ and for any
sequences $\nu _{n},\mu _{n}$ converging to $0$ such that the series $%
\sum_{n}\frac{\sqrt{\mu _{n}}}{\nu _{n}}$ converges, the sequence $%
(u_{\varepsilon })_{\varepsilon }$ is bounded uniformly in $\varepsilon $ in 
$X_{p,\nu _{n},\mu _{n}}$ for all $n$.
\end{proposition}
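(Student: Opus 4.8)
The plan is to estimate each of the three contributions to the defining norm of $X_{p,\nu_n,\mu_n}$ separately, noting that the first two are controlled directly by Lemma \ref{l4.1} while the third requires Lemma \ref{l4.2} combined with a summation-over-$n$ argument. First I would observe that estimate (\ref{4.3}) gives $\mathbb{E}\,\text{ess}\sup_{0\le t\le T}|u_\varepsilon(t)|^p\le C$ and estimate (\ref{4.4}) gives $\mathbb{E}\left(\int_0^T\|u_\varepsilon(t)\|^2\,dt\right)^{p/2}\le C$, both with $C$ independent of $\varepsilon$; these are precisely the first two summands in the norm, so those pose no difficulty.

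The remaining term is $\mathbb{E}\sup_n\frac{1}{\nu_n}\sup_{|\theta|\le\mu_n}\left(\int_0^T|u_\varepsilon(t+\theta)-u_\varepsilon(t)|_{H^{-1}(Q)}^2\,dt\right)^{1/2}$, and here the essential obstacle is the supremum over $n$ sitting \emph{inside} the expectation: it cannot be estimated termwise by Lemma \ref{l4.2} until it has been linearized. To handle this I would dominate the supremum of these nonnegative quantities by their sum, obtaining
\[
\mathbb{E}\sup_n\frac{1}{\nu_n}\sup_{|\theta|\le\mu_n}\left(\int_0^T|u_\varepsilon(t+\theta)-u_\varepsilon(t)|_{H^{-1}(Q)}^2\,dt\right)^{1/2}\le\sum_n\frac{1}{\nu_n}\,\mathbb{E}\sup_{|\theta|\le\mu_n}\left(\int_0^T|u_\varepsilon(t+\theta)-u_\varepsilon(t)|_{H^{-1}(Q)}^2\,dt\right)^{1/2}.
\]
For each fixed $n$ I would then use the monotonicity of the square root to write the inner supremum as $\left(\sup_{|\theta|\le\mu_n}\int_0^T|\cdots|^2\,dt\right)^{1/2}$ and apply Jensen's inequality (concavity of $s\mapsto s^{1/2}$) to pull the expectation inside the root:
\[
\mathbb{E}\sup_{|\theta|\le\mu_n}\left(\int_0^T|u_\varepsilon(t+\theta)-u_\varepsilon(t)|_{H^{-1}(Q)}^2\,dt\right)^{1/2}\le\left(\mathbb{E}\sup_{|\theta|\le\mu_n}\int_0^T|u_\varepsilon(t+\theta)-u_\varepsilon(t)|_{H^{-1}(Q)}^2\,dt\right)^{1/2}.
\]

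At this point Lemma \ref{l4.2} applies with $\delta=\mu_n$ (legitimate since $\mu_n\to0$, so $\mu_n\in(0,1)$ for all large $n$), bounding the right-hand side by $(C\mu_n)^{1/2}=\sqrt{C}\,\sqrt{\mu_n}$ with $C$ independent of both $\varepsilon$ and $n$. Substituting back yields the $\varepsilon$-uniform bound $\sqrt{C}\sum_n\frac{\sqrt{\mu_n}}{\nu_n}$, which is finite exactly because of the structural hypothesis that $\sum_n\frac{\sqrt{\mu_n}}{\nu_n}$ converges. Adding the three estimates then gives $\sup_\varepsilon\mathbb{E}\|u_\varepsilon\|_{X_{p,\nu_n,\mu_n}}<\infty$, which is the claim. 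The main difficulty, as indicated, is the interchange of expectation and the supremum over $n$; everything else is a direct invocation of Lemmas \ref{l4.1} and \ref{l4.2}, and it is the $O(\sqrt{\mu_n})$ increment coming from Lemma \ref{l4.2} that forces the convergence assumption on $\sum_n\sqrt{\mu_n}/\nu_n$ into the statement.
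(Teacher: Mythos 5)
Your proof is correct and is essentially the argument the paper intends: the paper simply asserts that the proposition follows by combining Lemmas \ref{l4.1} and \ref{l4.2}, and your write-up supplies exactly the missing details (the first two summands from (\ref{4.3})--(\ref{4.4}), and the third by dominating $\sup_n$ by $\sum_n$, applying Jensen's inequality for the square root, invoking Lemma \ref{l4.2} with $\delta=\mu_n$, and using the summability of $\sum_n\sqrt{\mu_n}/\nu_n$). No gaps; the only cosmetic caveat, which you already note, is that Lemma \ref{l4.2} requires $\mu_n\in(0,1)$, which holds for all but finitely many $n$ since $\mu_n\to 0$.
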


Next we consider the space $\mathfrak{S}=\mathcal{C}(0,T;\mathbb{R}%
^{m})\times L^{2}(Q_{T})$ equipped with the Borel $\sigma $-algebra $%
\mathcal{B}(\mathfrak{S})$. For $0<\varepsilon <1$, let $\Phi _{\varepsilon
} $ be the measurable $\mathfrak{S}$-valued mapping defined on $(\Omega ,%
\mathcal{F},\mathbb{P})$ by 
\begin{equation*}
\Phi _{\varepsilon }(\omega )=({W}(\omega ),u_{\varepsilon }(\omega )).
\end{equation*}%
For each $\varepsilon $ we introduce a probability measure $\Pi
^{\varepsilon }$ on $(\mathfrak{S};\mathcal{B}(\mathfrak{S}))$ defined by 
\begin{equation*}
\Pi ^{\varepsilon }(S)=\mathbb{P}(\Phi _{\varepsilon }^{-1}(S))\text{, for
any }S\in \mathcal{B}(\mathfrak{S}).
\end{equation*}

\begin{theorem}
\label{t4.2}The family of probability measures $\{\Pi ^{\varepsilon
}:0<\varepsilon <1\}$ is tight in $(\mathfrak{S};\mathcal{B}(\mathfrak{S}))$.
\end{theorem}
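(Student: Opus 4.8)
The plan is to exploit the product structure of $\mathfrak{S}=\mathcal{C}(0,T;\mathbb{R}^{m})\times L^{2}(Q_{T})$ and establish tightness componentwise. Recall that a family of Borel probability measures on a Polish space is tight precisely when, for every $\eta>0$, there is a compact set carrying mass at least $1-\eta$ uniformly over the family. Since a compact subset of $\mathfrak{S}$ may be taken in the form $K^{1}\times K^{2}$ with $K^{1}\subset\mathcal{C}(0,T;\mathbb{R}^{m})$ and $K^{2}\subset L^{2}(Q_{T})$ compact, and since the first marginal of $\Pi^{\varepsilon}$ is the law of $W$ and its second marginal is the law of $u_{\varepsilon}$, we have
\[
\Pi^{\varepsilon}(K^{1}\times K^{2})\geq 1-\mathbb{P}(W\notin K^{1})-\mathbb{P}(u_{\varepsilon}\notin K^{2}).
\]
Thus it suffices, for each $\eta>0$, to produce compact sets $K^{1},K^{2}$ with $\mathbb{P}(W\notin K^{1})\leq\eta/2$ and $\sup_{0<\varepsilon<1}\mathbb{P}(u_{\varepsilon}\notin K^{2})\leq\eta/2$.

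For the first component the key observation is that the law of $W$ on $\mathcal{C}(0,T;\mathbb{R}^{m})$ does not depend on $\varepsilon$: it is the fixed Wiener measure on the prescribed probability space. A single Borel probability measure on a complete separable metric space is automatically tight (Ulam's theorem), so a compact $K^{1}$ with $\mathbb{P}(W\notin K^{1})\leq\eta/2$ exists at once. Alternatively one may display $K^{1}$ explicitly through the uniform modulus of continuity of Brownian paths together with the Arzel\`a--Ascoli theorem.

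For the second component the decisive ingredients are the compactness Lemma \ref{l4.3} and the uniform bound of Proposition \ref{p4.1}. First fix two sequences $\nu_{n},\mu_{n}\to 0$ with $\sum_{n}\sqrt{\mu_{n}}/\nu_{n}<\infty$. By Proposition \ref{p4.1} (applied with $p=1$) the processes $u_{\varepsilon}$ satisfy $\sup_{0<\varepsilon<1}\mathbb{E}\|u_{\varepsilon}\|_{X_{1,\nu_{n},\mu_{n}}}\leq C$ with $C$ independent of $\varepsilon$; in particular the nonnegative real random variable $\omega\mapsto\|u_{\varepsilon}(\omega)\|_{D_{\nu_{n},\mu_{n}}}$ has expectation bounded by $C$ uniformly in $\varepsilon$, the three contributions being controlled respectively by \eqref{11d}, \eqref{11dd} and, after Jensen's inequality and summation, by Lemma \ref{l4.2} and the convergence of $\sum_{n}\sqrt{\mu_{n}}/\nu_{n}$. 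By Lemma \ref{l4.3} the closed ball $B_{R}=\{q:\|q\|_{D_{\nu_{n},\mu_{n}}}\leq R\}$ has compact closure $K^{2}=\overline{B_{R}}$ in $L^{2}(Q_{T})$, and Markov's inequality yields
\[
\mathbb{P}(u_{\varepsilon}\notin K^{2})\leq\mathbb{P}\big(\|u_{\varepsilon}\|_{D_{\nu_{n},\mu_{n}}}>R\big)\leq\frac{1}{R}\,\mathbb{E}\|u_{\varepsilon}\|_{D_{\nu_{n},\mu_{n}}}\leq\frac{C}{R}.
\]
Choosing $R$ so large that $C/R\leq\eta/2$ gives the required uniform bound on the second marginal, and taking $K=K^{1}\times K^{2}$ concludes the argument.

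The substantive difficulty has in fact been absorbed into the preparatory results: the point is precisely that the pathwise fractional-in-time bound of Lemma \ref{l4.2}, combined with the energy estimates of Lemma \ref{l4.1}, survives in expectation after summation over $(\nu_{n}),(\mu_{n})$, which is the content of Proposition \ref{p4.1} and the source of the uniform moment bound driving Markov's inequality. Once this is in hand the remaining steps are routine; the only point requiring a little care is the correct matching of the exponent $p$ in $X_{p,\nu_{n},\mu_{n}}$ with the \emph{non-homogeneous} structure of the $D_{\nu_{n},\mu_{n}}$-norm, so that the Markov estimate genuinely controls $\|u_{\varepsilon}\|_{D_{\nu_{n},\mu_{n}}}$ and hence membership in the compact set $K^{2}$.
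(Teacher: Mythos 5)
Your proof is correct and follows essentially the same route as the paper: product compacts $K^{1}\times K^{2}$, the explicit compact ball in $D_{\nu_{n},\mu_{n}}$ via Lemma \ref{l4.3} together with the uniform moment bound of Proposition \ref{p4.1} and Markov's inequality for the $u_{\varepsilon}$-marginal. The only (harmless) deviation is that you dispatch the Wiener marginal by Ulam's theorem, observing that its law is independent of $\varepsilon$, where the paper instead constructs $\Sigma_{\delta}$ explicitly from the modulus of continuity of Brownian paths and Arzel\`a--Ascoli; both are valid, and yours is the shorter argument for that half.
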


\begin{proof}
For $\delta >0$ we should find compact subsets 
\begin{equation*}
\Sigma _{\delta }\subset \mathcal{C}(0,T;\mathbb{R}^{m});Y_{\delta }\subset
L^{2}(Q_{T}),
\end{equation*}%
such that 
\begin{equation}
\mathbb{P}\left( \omega :W(\cdot ,\omega )\notin \Sigma _{\delta }\right)
\leq \frac{\delta }{2},  \label{47*}
\end{equation}%
\begin{equation}
\mathbb{P}\left( \omega :u_{\varepsilon }(\cdot ,\omega )\notin Y_{\delta
}\right) \leq \frac{\delta }{2},  \label{47**}
\end{equation}%
for all $\varepsilon $.

The quest for $\Sigma _{\delta }$ is made by taking into account some facts
about Wiener process such as the formula 
\begin{equation}
\mathbb{E}\left| W(t)-W(s)\right| ^{2j}=(2j-1)!(t-s)^{j},j=1,2,....
\label{48}
\end{equation}%
For a constant $L_{\delta }>0$ depending on $\delta $ to be fixed later and $%
n\in \mathbb{N}$, we consider the set 
\begin{equation*}
\Sigma _{\delta }=\{W(\cdot )\in \mathcal{C}(0,T;\mathbb{R}^{m}):\sup 
_{\substack{ t,s\in \lbrack 0,T]  \\ \left| t-s\right| <\frac{1}{n^{6}}}}%
n\left| W(s)-W(t)\right| \leq L_{\delta }\}.
\end{equation*}%
The set $\Sigma _{\delta }$ is relatively compact in $\mathcal{C}(0,T;%
\mathbb{R}^{m})$ by Ascoli-Arzela's theorem. Furthermore $\Sigma _{\delta }$
is closed in $\mathcal{C}(0,T;\mathbb{R}^{m})$, therefore it is compact in $%
\mathcal{C}(0,T;\mathbb{R}^{m})$. Making use of Markov's inequality 
\begin{equation*}
\mathbb{P}(\omega ;\zeta (\omega )\geq \beta )\leq \frac{1}{\beta ^{k}}%
\mathbb{E}[\left| \zeta (\omega )\right| ^{k}],
\end{equation*}%
for any random variable $\zeta $ and real numbers $k$ we get%
\begin{align*}
\mathbb{P}\left( \omega :W(\omega )\notin \Sigma _{\delta }\right) & \leq 
\mathbb{P}\left[ \cup _{n}\left\{ \omega :\sup_{\substack{ t,s\in \lbrack
0,T]  \\ |t-s|<\frac{1}{n^{6}}}}\left| W(s)-W(t)\right| \geq \frac{L_{\delta
}}{n}\right\} \right] , \\
& \leq \sum_{n=1}^{\infty }\sum_{i=0}^{n^{6}-1}\left( \frac{n}{L_{\delta }}%
\right) ^{4}\mathbb{E}\sup_{\frac{iT}{n^{6}}\leq t\leq \frac{(i+1)T}{n^{6}}%
}\left| W(t)-W(iTn^{-6}\right| ^{4}, \\
& \leq C\sum_{n=1}^{\infty }\sum_{i=0}^{n^{6}-1}\left( \frac{n}{L_{\delta }}%
\right) ^{4}(Tn^{-6})^{2}n^{6}=\frac{C}{L_{\delta }^{4}}\sum_{n=1}^{\infty }%
\frac{1}{n^{2}},
\end{align*}%
where we have used \eqref{48}. Since the right hand side of \eqref{48} is
independent of $\varepsilon $, then so is the constant $C$ in the above
estimate. We take $L_{\delta }^{4}=\frac{1}{2C\varepsilon }\left(
\sum_{n=1}^{\infty }\frac{1}{n^{2}}\right) ^{-1}$ and get \eqref{47*}.

Next we choose $Y_{\delta }$ as a ball of radius $M_{\delta }$ in $D_{\nu
_{n},\mu _{m}}$ centered at 0 and with $\nu _{n},\mu _{n}$ independent of $%
\delta $, converging to $0$ and such that the series $\sum_{n}\frac{\sqrt{%
\mu _{n}}}{\nu _{n}}$ converges, from Lemma \ref{l4.3}, $Y_{\delta }$ is a
compact subset of $L^{2}(Q_{T})$. Furthermore, we have 
\begin{eqnarray*}
\mathbb{P}\left( \omega :u_{\varepsilon }(\omega )\notin Y_{\delta }\right)
&\leq &\mathbb{P}\left( \omega :\left\| u_{\varepsilon }\right\| _{D_{\nu
_{n},\mu _{m}}}>M_{\delta }\right) \\
&\leq &\frac{1}{M_{\delta }}\left( \mathbb{E}\left\| u_{\varepsilon
}\right\| _{D_{\nu _{n},\mu _{m}}}\right) \\
&\leq &\frac{1}{M_{\delta }}\left( \mathbb{E}\left\| u_{\varepsilon
}\right\| _{X_{1,\nu _{n},\mu _{n}}}\right) \\
&\leq &\frac{C}{M_{\delta }}
\end{eqnarray*}%
where $C>0$ is independent of $\varepsilon $ (see Proposition \ref{p4.1} for
the justification.)

Choosing $M_{\delta }=2C\delta ^{-1}$, we get \eqref{47**}. From the
inequalities \eqref{47*}-\eqref{47**} we deduce that 
\begin{equation*}
\mathbb{P}\left( \omega :W(\omega )\in \Sigma _{\delta };u_{\varepsilon
}(\omega )\in Y_{\delta }\right) \geq 1-\delta ,
\end{equation*}%
for all $0<\varepsilon \leq 1$. This proves that for all $0<\varepsilon \leq
1$ 
\begin{equation*}
\Pi ^{\varepsilon }(\Sigma _{\delta }\times Y_{\delta })\geq 1-\delta ,
\end{equation*}%
from which we deduce the tightness of $\{\Pi ^{\varepsilon }:0<\varepsilon
\leq 1\}$ in $(\mathfrak{S},\mathcal{B}(\mathfrak{S}))$.
\end{proof}

Prokhorov's compactness result enables us to extract from $\left( \Pi
^{\varepsilon }\right) $ a subsequence $\left( \Pi ^{\varepsilon
_{j}}\right) $ such that 
\begin{equation*}
\Pi ^{\varepsilon _{j}}\text{ weakly converges to a probability measure }\Pi 
\text{ on }\mathfrak{S}.
\end{equation*}%
Skorokhod's theorem ensures the existence of a complete probability space $(%
\bar{\Omega},\bar{\mathcal{F}},\bar{\mathbb{P}})$ and random variables $({W}%
^{\varepsilon _{j}},u_{\varepsilon _{j}})$ and $(\bar{W},u_{0})$ defined on $%
(\bar{\Omega},\bar{\mathcal{F}},\bar{\mathbb{P}})$ with values in $\mathfrak{%
S}$ such that 
\begin{equation}
\text{The probability law of }({W}^{\varepsilon _{j}},u_{\varepsilon _{j}})%
\text{ is }\Pi ^{\varepsilon _{j}},  \label{4.6}
\end{equation}%
\begin{equation}
\text{The probability law of }(\bar{W},u_{0})\text{ is }\Pi ,\ \ \ \ \ \ \ 
\label{4.7}
\end{equation}%
\begin{equation}
{W}^{\varepsilon _{j}}\rightarrow \bar{W}\text{ in }\mathcal{C}(0,T;\mathbb{R%
}^{m})\,\,\bar{\mathbb{P}}\text{-a.s.,\ \ \ \ \ \ \ \ \ \ }  \label{4.8}
\end{equation}%
\begin{equation}
u_{\varepsilon _{j}}\rightarrow u_{0}\text{ in }L^{2}(Q_{T})\,\,\bar{\mathbb{%
P}}\text{-a.s..\ \ \ \ \ \ \ \ \ \ \ \ \ \ \ \ \ \ \ }  \label{4.9}
\end{equation}%
We can see that $\left\{ W^{\varepsilon _{j}}:\varepsilon _{j}\right\} $ is
a sequence of $m$-dimensional standard Brownian Motions. We let $\bar{%
\mathcal{F}}^{t}$ be the $\sigma $-algebra generated by $(\bar{W}%
(s),u_{0}(s)),0\leq s\leq t$ and the null sets of $\bar{\mathcal{F}}$. We
can show by arguing as in \cite{bensoussan2} that $\bar{W}$ is an $\bar{%
\mathcal{F}}^{t}$-adapted standard $\mathbb{R}^{m}$-valued Wiener process.
By the same argument as in \cite{bensoussan} we can show that 
\begin{eqnarray}
u_{\varepsilon _{j}}(t) &=&u^{0}+\int_{0}^{t}\Div\left( a\left( \frac{x}{%
\varepsilon _{j}},\frac{\tau }{\varepsilon _{j}^{2}}\right) Du_{\varepsilon
_{j}}(\tau )\right) d\tau +\frac{1}{\varepsilon _{j}}\int_{0}^{t}g\left( 
\frac{x}{\varepsilon _{j}},\frac{\tau }{\varepsilon _{j}^{2}},u_{\varepsilon
_{j}}\right) d\tau  \label{4.10} \\
&&+\int_{0}^{t}M^{\varepsilon _{j}}(u_{\varepsilon _{j}}(\tau
))dW^{\varepsilon _{j}},  \notag
\end{eqnarray}%
holds (as an equation in $H^{-1}(Q)$) for almost all $(\bar{\omega},t)\in 
\bar{\Omega}\times \lbrack 0,T]$.

\section{Homogenization results}

We assume in this section that all vector spaces are real vector spaces, and
all functions are real-valued. We keep using the same notation as in the
previous sections.

\subsection{Preliminary results}

Let $1<p<\infty $. It is a fact that the topological dual of $\mathcal{B}%
_{AP}^{p}(\mathbb{R}_{\tau };\mathcal{B}_{\#AP}^{1,p}(\mathbb{R}_{y}^{N}))$
is $\mathcal{B}_{AP}^{p^{\prime }}(\mathbb{R}_{\tau };[\mathcal{B}%
_{\#AP}^{1,p}(\mathbb{R}_{y}^{N})]^{\prime })$; this can be easily seen from
the fact that $\mathcal{B}_{\#AP}^{1,p}(\mathbb{R}_{y}^{N})$ is reflexive
(see Section 2) and $\mathcal{B}_{AP}^{p}(\mathbb{R}_{\tau };\mathcal{B}%
_{\#AP}^{1,p}(\mathbb{R}_{y}^{N}))$ is isometrically isomorphic to $L^{p}(%
\mathcal{K}_{\tau };\mathcal{B}_{\#AP}^{1,p}(\mathbb{R}_{y}^{N}))$. We
denote by $\left\langle ,\right\rangle $ (resp. $[,]$) the duality pairing
between $\mathcal{B}_{\#AP}^{1,p}(\mathbb{R}_{y}^{N})$ (resp. $\mathcal{B}%
_{AP}^{p}(\mathbb{R}_{\tau };\mathcal{B}_{\#AP}^{1,p}(\mathbb{R}_{y}^{N}))$)
and $[\mathcal{B}_{\#AP}^{1,p}(\mathbb{R}_{y}^{N})]^{\prime }$ (resp. $%
\mathcal{B}_{AP}^{p^{\prime }}(\mathbb{R}_{\tau };[\mathcal{B}_{\#AP}^{1,p}(%
\mathbb{R}_{y}^{N})]^{\prime })$). For the above reason, we have, for $u\in 
\mathcal{B}_{AP}^{p^{\prime }}(\mathbb{R}_{\tau };[\mathcal{B}_{\#AP}^{1,p}(%
\mathbb{R}_{y}^{N})]^{\prime })$ and $v\in \mathcal{B}_{AP}^{p}(\mathbb{R}%
_{\tau };\mathcal{B}_{\#AP}^{1,p}(\mathbb{R}_{y}^{N}))$, 
\begin{equation*}
\left[ u,v\right] =\int_{\mathcal{K}_{\tau }}\left\langle \widehat{u}(s_{0}),%
\widehat{v}(s_{0})\right\rangle d\beta _{\tau }(s_{0}).
\end{equation*}%
For a function $\psi \in \mathcal{D}_{AP}(\mathbb{R}_{y}^{N})/\mathbb{C}$ we
know that $\psi $ expresses as follows: $\psi =\varrho _{y}(\psi _{1})$ with 
$\psi _{1}\in AP^{\infty }(\mathbb{R}_{y}^{N})/\mathbb{C}$ where $\varrho
_{y}$ denotes the canonical mapping of $B_{AP}^{p}(\mathbb{R}_{y}^{N})$ onto 
$\mathcal{B}_{AP}^{p}(\mathbb{R}_{y}^{N})$; see Section 2. We will refer to $%
\psi _{1}$ as the representative of $\psi $ in $AP^{\infty }(\mathbb{R}%
_{y}^{N})/\mathbb{C}$. Likewise we define the representative of $\psi \in 
\mathcal{D}_{AP}(\mathbb{R}_{\tau })\otimes \lbrack \mathcal{D}_{AP}(\mathbb{%
R}_{y}^{N})/\mathbb{C}]$ as an element of $AP^{\infty }(\mathbb{R}_{\tau
})\otimes \lbrack AP^{\infty }(\mathbb{R}_{y}^{N})/\mathbb{C}]$ satisfying a
similar property.

With all this in mind, we have the following

\begin{lemma}
\label{l5.1}Let $\psi \in B(\bar{\Omega})\otimes \mathcal{C}_{0}^{\infty
}(Q_{T})\otimes (\mathcal{D}_{AP}(\mathbb{R}_{\tau })\otimes \lbrack 
\mathcal{D}_{AP}(\mathbb{R}_{y}^{N})/\mathbb{C}])$ and $\psi _{1}$ be its
representative in $B(\bar{\Omega})\otimes \mathcal{C}_{0}^{\infty
}(Q_{T})\otimes \lbrack AP^{\infty }(\mathbb{R}_{\tau })\otimes (AP^{\infty
}(\mathbb{R}_{y}^{N})/\mathbb{C})]$. Let $(u_{\varepsilon })_{\varepsilon
\in E}$, $E^{\prime }$ and $(u_{0},u_{1})$ be either as in Theorem \emph{\ref%
{t3.2}} or as in Theorem \emph{\ref{t3.3}}. Then, as $E^{\prime }\ni
\varepsilon \rightarrow 0$ 
\begin{equation*}
\int_{Q_{T}\times \bar{\Omega}}\frac{1}{\varepsilon }u_{\varepsilon }\psi
_{1}^{\varepsilon }dxdtd\bar{\mathbb{P}}\rightarrow \int_{Q_{T}\times \bar{%
\Omega}}\left[ u_{1}(x,t,\omega ),\psi (x,t,\omega )\right] dxdtd\bar{%
\mathbb{P}}.
\end{equation*}
\end{lemma}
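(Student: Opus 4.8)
The plan is to reduce to a single tensor term and exploit the mean-zero-in-$y$ property of the test function to absorb the singular factor $1/\varepsilon$ as an exact spatial divergence. By linearity it suffices to treat $\psi=\phi(\omega)\varphi(x,t)\chi(\tau)\Psi(y)$ with $\phi\in B(\bar{\Omega})$, $\varphi\in\mathcal{C}_{0}^{\infty}(Q_{T})$, $\chi\in\mathcal{D}_{AP}(\mathbb{R}_{\tau})$ and $\Psi\in\mathcal{D}_{AP}(\mathbb{R}_{y}^{N})/\mathbb{C}$, whose representative is $\psi_{1}=\phi\varphi\chi_{1}\Psi_{1}$ with $\Psi_{1}\in AP^{\infty}(\mathbb{R}_{y}^{N})/\mathbb{C}$, so that $M_{y}(\Psi_{1})=0$. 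First I would invoke \eqref{2.1}: solve $\Delta_{y}\Theta=\Psi_{1}$ for the unique $\Theta\in AP^{\infty}(\mathbb{R}_{y}^{N})$ with $M_{y}(\Theta)=0$ (hypoellipticity upgrading $\Theta$ to $AP^{\infty}$), and set $G=D_{y}\Theta\in(AP^{\infty}(\mathbb{R}_{y}^{N}))^{N}$. Then $\Psi_{1}=\Div_{y}G$ with $M_{y}(G)=0$, and the elementary scaling identity $\Div_{x}(G(x/\varepsilon))=\varepsilon^{-1}(\Div_{y}G)(x/\varepsilon)=\varepsilon^{-1}\Psi_{1}^{\varepsilon}$ holds.

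Writing $G_{i}^{\varepsilon}(x)=G_{i}(x/\varepsilon)$ and $\chi^{\varepsilon}(t)=\chi_{1}(t/\varepsilon^{2})$, I would integrate by parts in $x$; since $u_{\varepsilon}\in W_{0}^{1,p}(Q)$ and $\varphi$ has compact support there is no boundary term, giving
\begin{equation*}
\int_{Q_{T}\times\bar{\Omega}}\frac{1}{\varepsilon}u_{\varepsilon}\psi_{1}^{\varepsilon}\,dxdtd\bar{\mathbb{P}}=-\sum_{i=1}^{N}\int_{Q_{T}\times\bar{\Omega}}\varphi\frac{\partial u_{\varepsilon}}{\partial x_{i}}G_{i}^{\varepsilon}\chi^{\varepsilon}\phi\,dxdtd\bar{\mathbb{P}}-\sum_{i=1}^{N}\int_{Q_{T}\times\bar{\Omega}}u_{\varepsilon}\frac{\partial\varphi}{\partial x_{i}}G_{i}^{\varepsilon}\chi^{\varepsilon}\phi\,dxdtd\bar{\mathbb{P}}.
\end{equation*}
The functions $\varphi G_{i}^{\varepsilon}\chi^{\varepsilon}\phi$ and $\frac{\partial\varphi}{\partial x_{i}}G_{i}^{\varepsilon}\chi^{\varepsilon}\phi$ are exactly of the admissible form in Definition \ref{d3.1}, being products of an element of $B(\bar{\Omega})$, a $\mathcal{C}_{0}^{\infty}(Q_{T})$ factor, and bounded oscillating almost periodic functions of $x/\varepsilon$ and $t/\varepsilon^{2}$.

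Next I would pass to the limit along $E^{\prime}$. For the first sum I use the weak $\Sigma$-convergence \eqref{3.2} of $\partial u_{\varepsilon}/\partial x_{i}$ towards $\partial u_{0}/\partial x_{i}+\overline{\partial}u_{1}/\partial y_{i}$, and for the second the weak $\Sigma$-convergence of $u_{\varepsilon}$ towards $u_{0}$; in both cases Definition \ref{d3.1} applies directly since the test functions are admissible. The limit is
\begin{equation*}
-\sum_{i=1}^{N}\iint_{Q_{T}\times\bar{\Omega}\times\mathcal{K}}\left(\frac{\partial u_{0}}{\partial x_{i}}+\partial_{i}\widehat{u}_{1}\right)\varphi\widehat{G}_{i}\widehat{\chi}\phi\,dxdtd\bar{\mathbb{P}}d\beta-\sum_{i=1}^{N}\iint_{Q_{T}\times\bar{\Omega}\times\mathcal{K}}u_{0}\frac{\partial\varphi}{\partial x_{i}}\widehat{G}_{i}\widehat{\chi}\phi\,dxdtd\bar{\mathbb{P}}d\beta.
\end{equation*}
The crucial simplification is $\int_{\mathcal{K}_{y}}\widehat{G}_{i}\,d\beta_{y}=M_{y}(G_{i})=0$ (mean of a $y$-derivative): every term whose $\mathcal{K}_{y}$-dependence resides only in $\widehat{G}_{i}$ drops out, killing the entire second sum and the $\partial u_{0}/\partial x_{i}$-part of the first, and leaving only $-\sum_{i}\iint_{Q_{T}\times\bar{\Omega}\times\mathcal{K}}\partial_{i}\widehat{u}_{1}\,\varphi\widehat{G}_{i}\widehat{\chi}\phi\,dxdtd\bar{\mathbb{P}}d\beta$.

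Finally I would identify this surviving term with $\int_{Q_{T}\times\bar{\Omega}}[u_{1},\psi]\,dxdtd\bar{\mathbb{P}}$. The point is that the mean-zero function $\Psi$ acts on $\mathcal{B}_{\#AP}^{1,p}(\mathbb{R}_{y}^{N})$ as the divergence-form functional $\Div_{p'}G$ of \eqref{2.6}: for $w\in\mathcal{B}_{\#AP}^{1,p}(\mathbb{R}_{y}^{N})$,
\begin{equation*}
\langle\Psi,w\rangle=\langle\Div_{p'}G,w\rangle=-\langle G,\overline{D}_{y}w\rangle=-\int_{\mathcal{K}_{y}}\widehat{G}\cdot\partial\widehat{w}\,d\beta_{y},
\end{equation*}
which is the continuous extension of $w\mapsto M_{y}(\Psi w)$, the bound $|M_{y}(G\cdot\overline{D}_{y}w)|\leq\|G\|_{p'}\|w\|_{\#,p}$ guaranteeing continuity in the $\#,p$-seminorm. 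Taking $w=u_{1}$, integrating over $\mathcal{K}_{\tau}$ against $\widehat{\chi}$ and over $Q_{T}\times\bar{\Omega}$ against $\phi\varphi$, and using $\partial_{i}\widehat{u}_{1}=\widehat{\overline{\partial}u_{1}/\partial y_{i}}$, reproduces exactly the surviving term; summing over the tensor components finishes the proof. The main obstacle is precisely this last step: making rigorous the identification of the smooth mean-zero $\Psi$ with the element $\Div_{p'}G\in[\mathcal{B}_{\#AP}^{1,p}(\mathbb{R}_{y}^{N})]'$ through which $[u_{1},\psi]$ is defined, and verifying that the representation $\Psi_{1}=\Div_{y}G$ from \eqref{2.1} is compatible with the duality \eqref{2.6} so that no spurious constant or sign enters. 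Everything else is the scaling identity, an integration by parts with vanishing boundary term, and a direct appeal to Definition \ref{d3.1}, with $M_{y}(G)=0$ being what eliminates the $u_{0}$-contributions and isolates the corrector $u_{1}$.
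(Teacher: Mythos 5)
Your proof is correct and follows essentially the same route as the paper's: represent the mean-zero test function as $\Delta _{y}$ of an $AP^{\infty }$ potential via (\ref{2.1}), so that $\frac{1}{\varepsilon }\psi _{1}^{\varepsilon }$ becomes an exact $x$-divergence, integrate by parts, pass to the weak $\Sigma $-limit using (\ref{3.2}), and identify the surviving corrector term with $\left[ u_{1},\psi \right] $ through the duality (\ref{2.6})--(\ref{2.7}) and Proposition \ref{p2.5}. The only cosmetic difference is that you tensorize first and dispose of the $u_{0}$-contributions via $M_{y}(G_{i})=0$, whereas the paper keeps a general $\psi _{1}$ and cancels the $Du_{0}$-term against the $u_{0}\,\mathrm{div}_{x}$-term by a further integration by parts in $x$; the identification step you flag as the main obstacle is precisely what the paper's Laplacian machinery supplies.
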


\begin{proof}
We recall that for $\psi _{1}$ as above, we have 
\begin{equation*}
\psi _{1}^{\varepsilon }(x,t,\omega )=\psi _{1}\left( x,t,\frac{x}{%
\varepsilon },\frac{t}{\varepsilon ^{2}},\omega \right) \text{ for }%
(x,t,\omega )\in Q_{T}\times \bar{\Omega}\text{.}
\end{equation*}%
This being so, since $\psi _{1}(x,t,\cdot ,\tau ,\omega )\in AP^{\infty }(%
\mathbb{R}_{y}^{N})/\mathbb{C}=\{u\in AP^{\infty }(\mathbb{R}%
_{y}^{N}):M_{y}(u)=0\}$, there exists a unique $\phi \in B(\bar{\Omega}%
)\otimes \mathcal{C}_{0}^{\infty }(Q_{T})\otimes \lbrack AP^{\infty }(%
\mathbb{R}_{\tau })\otimes (AP^{\infty }(\mathbb{R}_{y}^{N})/\mathbb{C})]$
such that $\psi _{1}=\Delta _{y}\phi $. We therefore have 
\begin{eqnarray*}
\int_{Q_{T}\times \bar{\Omega}}\frac{1}{\varepsilon }u_{\varepsilon }\psi
_{1}^{\varepsilon }dxdtd\bar{\mathbb{P}} &=&\int_{Q_{T}\times \bar{\Omega}}%
\frac{1}{\varepsilon }u_{\varepsilon }(\Delta _{y}\phi )^{\varepsilon }dxdtd%
\bar{\mathbb{P}} \\
&=&-\int_{Q_{T}\times \bar{\Omega}}Du_{\varepsilon }\cdot (D_{y}\phi
)^{\varepsilon }dxdtd\bar{\mathbb{P}} \\
&&-\int_{Q_{T}\times \bar{\Omega}}u_{\varepsilon }(\text{div}_{x}(D_{y}\phi
))^{\varepsilon }dxdtd\bar{\mathbb{P}}.
\end{eqnarray*}%
Passing to the limit in the above equation as $E^{\prime }\ni \varepsilon
\rightarrow 0$ we are led to 
\begin{eqnarray*}
\int_{Q_{T}\times \bar{\Omega}}\frac{1}{\varepsilon }u_{\varepsilon }\psi
_{1}^{\varepsilon }dxdtd\bar{\mathbb{P}} &\rightarrow &-\iint_{Q_{T}\times 
\bar{\Omega}\times \mathcal{K}}(Du_{0}+\partial \widehat{u}_{1})\cdot
\partial \widehat{\phi }dxdtd\bar{\mathbb{P}}d\beta \\
&&-\iint_{Q_{T}\times \bar{\Omega}\times \mathcal{K}}u_{0}~\text{div}%
_{x}(\partial \widehat{\phi })dxdtd\bar{\mathbb{P}}d\beta \\
&=&-\iint_{Q_{T}\times \bar{\Omega}\times \mathcal{K}}\partial \widehat{u}%
_{1}\cdot \partial \widehat{\phi }dxdtd\bar{\mathbb{P}}d\beta
\end{eqnarray*}%
since $\iint_{Q_{T}\times \bar{\Omega}\times \mathcal{K}}u_{0}\Div%
_{x}(\partial \widehat{\phi })dxdtd\bar{\mathbb{P}}d\beta
=-\iint_{Q_{T}\times \bar{\Omega}\times \mathcal{K}}Du_{0}\cdot \partial 
\widehat{\phi }dxdtd\bar{\mathbb{P}}d\beta $. But 
\begin{eqnarray*}
&&-\iint_{Q_{T}\times \bar{\Omega}\times \mathcal{K}}\partial \widehat{u}%
_{1}\cdot \partial \widehat{\phi }dxdtd\bar{\mathbb{P}}d\beta \\
&=&\int_{Q_{T}\times \bar{\Omega}}\left[ \int_{\mathcal{K}_{\tau }}\left(
-\int_{\mathcal{K}_{y}}\partial \widehat{u}_{1}(x,t,s,s_{0},\omega )\cdot
\partial \widehat{\phi }(x,t,s,s_{0},\omega )d\beta _{y}\right) d\beta
_{\tau }\right] dxdtd\bar{\mathbb{P}}.
\end{eqnarray*}

Recalling the definition of the Laplacian $\overline{\Delta }_{y}$ in
Section 2, we deduce from (\ref{2.7}) and Proposition \ref{p2.5} that 
\begin{eqnarray*}
&&-\int_{\mathcal{K}_{y}}\partial \widehat{u}_{1}(x,t,s,s_{0},\omega )\cdot
\partial \widehat{\phi }(x,t,s,s_{0},\omega )d\beta _{y} \\
&=&\left\langle \overline{\Delta }_{y}\varrho _{y}(\widehat{\phi }(x,t,\cdot
,s_{0},\omega )),\widehat{u}_{1}(x,t,\cdot ,s_{0},\omega )\right\rangle \\
&=&\left\langle \varrho _{y}(\Delta _{y}\widehat{\phi }(x,t,\cdot
,s_{0},\omega )),\widehat{u}_{1}(x,t,\cdot ,s_{0},\omega )\right\rangle \\
&=&\left\langle \widehat{\varrho _{y}(\Delta _{y}\phi )}(x,t,\cdot
,s_{0},\omega )),\widehat{u}_{1}(x,t,\cdot ,s_{0},\omega )\right\rangle \\
&=&\left\langle \widehat{\psi }(x,t,\cdot ,s_{0},\omega )),\widehat{u}%
_{1}(x,t,\cdot ,s_{0},\omega )\right\rangle
\end{eqnarray*}%
where from the first of the above series of equalities, the hat $\widehat{.}$
stands for the Gelfand transform with respect to $AP(\mathbb{R}_{\tau })$
and so, does not act on $\Delta _{y}$ and $\varrho _{y}$. The lemma
therefore follows from the equalities 
\begin{eqnarray*}
&&\int_{\mathcal{K}_{\tau }}\left( -\int_{\mathcal{K}_{y}}\partial \widehat{u%
}_{1}(x,t,s,s_{0},\omega )\cdot \partial \widehat{\phi }(x,t,s,s_{0},\omega
)d\beta _{y}\right) d\beta _{\tau } \\
&=&\int_{\mathcal{K}_{\tau }}\left\langle \widehat{\psi }(x,t,\cdot
,s_{0},\omega )),\widehat{u}_{1}(x,t,\cdot ,s_{0},\omega )\right\rangle
d\beta _{\tau }(s_{0}) \\
&=&\left[ \psi (x,t,\cdot ,\cdot ,\omega )),u_{1}(x,t,\cdot ,\cdot ,\omega )%
\right] .
\end{eqnarray*}
\end{proof}

For $u\in \mathcal{B}_{AP}^{p}(\mathbb{R}_{\tau })$ we denote by $\overline{%
\partial }/\partial \tau $ the temporal derivative defined exactly as its
spatial counterpart $\overline{\partial }/\partial y_{i}$. We also put $%
\partial _{0}=\mathcal{G}_{1}(\overline{\partial }/\partial \tau )$. $%
\overline{\partial }/\partial \tau $ and $\partial _{0}$ enjoy the same
properties as $\overline{\partial }/\partial y_{i}$ (see Section 2). In
particular, they are skew adjoint. Now, let us view $\overline{\partial }%
/\partial \tau $ as an unbounded operator defined from $\mathcal{V}=\mathcal{%
B}_{AP}^{p}(\mathbb{R}_{\tau };\mathcal{B}_{\#AP}^{1,p}(\mathbb{R}_{y}^{N}))$
into $\mathcal{V}^{\prime }=\mathcal{B}_{AP}^{p^{\prime }}(\mathbb{R}_{\tau
};[\mathcal{B}_{\#AP}^{1,p}(\mathbb{R}_{y}^{N})]^{\prime })$. Proceeding as
in \cite[pp. 1243-1244]{EfendievPankov}, it gives rise to an unbounded
operator still denoted by $\overline{\partial }/\partial \tau $ with the
following properties:

\begin{itemize}
\item[(P)$_{1}$] The domain of $\overline{\partial }/\partial \tau $ is $%
\mathcal{W}=\left\{ v\in \mathcal{V}:\overline{\partial }v/\partial \tau \in 
\mathcal{V}^{\prime }\right\} $;

\item[(P)$_{2}$] $\overline{\partial }/\partial \tau $ is skew adjoint, that
is, for all $u,v\in \mathcal{W}$, 
\begin{equation*}
\left[ u,\frac{\overline{\partial }v}{\partial \tau }\right] =-\left[ \frac{%
\overline{\partial }u}{\partial \tau },v\right] .
\end{equation*}

\item[(P)$_{3}$] The space $\mathcal{E}=\mathcal{D}_{AP}(\mathbb{R}_{\tau
})\otimes \lbrack \mathcal{D}_{AP}(\mathbb{R}_{y}^{N})/\mathbb{C}]$ is dense
in $\mathcal{W}$.
\end{itemize}

The above operator will be useful in the homogenization process. This being
so, the preceding lemma has a crucial corollary.

\begin{corollary}
\label{c5.1}Let the hypotheses be those of Lemma \emph{\ref{l5.1}}. Assume
moreover that $u_{1}\in \mathcal{W}$. Then, as $E^{\prime }\ni \varepsilon
\rightarrow 0$,%
\begin{equation*}
\int_{Q_{T}\times \bar{\Omega}}\varepsilon u_{\varepsilon }\frac{\partial
\psi _{1}^{\varepsilon }}{\partial t}dxdtd\bar{\mathbb{P}}\rightarrow
-\int_{Q_{T}\times \bar{\Omega}}\left[ \frac{\overline{\partial }u_{1}}{%
\partial \tau }(x,t,\omega ),\psi (x,t,\omega )\right] dxdtd\bar{\mathbb{P}}.
\end{equation*}
\end{corollary}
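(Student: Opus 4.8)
The plan is to differentiate the oscillating test function with respect to the slow time $t$, split off the genuinely singular part of the derivative, and reduce the statement to a direct application of Lemma~\ref{l5.1}. Writing $\psi_1^\varepsilon(x,t,\omega)=\psi_1(x,t,x/\varepsilon,t/\varepsilon^2,\omega)$, the chain rule gives
\[
\varepsilon\frac{\partial\psi_1^\varepsilon}{\partial t}=\varepsilon\left(\frac{\partial\psi_1}{\partial t}\right)^\varepsilon+\frac{1}{\varepsilon}\left(\frac{\partial\psi_1}{\partial\tau}\right)^\varepsilon,
\]
since the slow time enters $\psi_1$ through its second argument and the fast time through $t/\varepsilon^2$. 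Multiplying by $u_\varepsilon$ and integrating over $Q_T\times\bar\Omega$, the first term is controlled by the prefactor $\varepsilon$: the function $(\partial\psi_1/\partial t)^\varepsilon$ is bounded uniformly in $\varepsilon$ and compactly supported in $(x,t)$, while $(u_\varepsilon)$ is bounded in $L^p(Q_T\times\bar\Omega)$ by the a priori estimates of Lemma~\ref{l4.1}, so H\"older's inequality bounds the integral uniformly and the whole contribution tends to $0$.

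For the second term, the key point is that $\partial\psi_1/\partial\tau$ belongs to the same admissible class as $\psi_1$. Since differentiation in $\tau$ commutes with the spatial mean value $M_y$, the function $\partial\psi_1/\partial\tau(x,t,\cdot,\tau,\omega)$ still has zero $y$-mean, and since $\mathcal{D}_{AP}(\mathbb{R}_\tau)=\varrho(AP^\infty(\mathbb{R}_\tau))$ is stable under $\partial_\tau$, it is precisely the representative of $\overline{\partial}\psi/\partial\tau\in B(\bar\Omega)\otimes\mathcal{C}_0^\infty(Q_T)\otimes(\mathcal{D}_{AP}(\mathbb{R}_\tau)\otimes[\mathcal{D}_{AP}(\mathbb{R}_y^N)/\mathbb{C}])$. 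Applying Lemma~\ref{l5.1} with $\overline{\partial}\psi/\partial\tau$ in place of $\psi$ then yields
\[
\int_{Q_T\times\bar\Omega}\frac{1}{\varepsilon}u_\varepsilon\left(\frac{\partial\psi_1}{\partial\tau}\right)^\varepsilon dxdtd\bar{\mathbb{P}}\longrightarrow\int_{Q_T\times\bar\Omega}\left[u_1(x,t,\omega),\frac{\overline{\partial}\psi}{\partial\tau}(x,t,\omega)\right]dxdtd\bar{\mathbb{P}}.
\]

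Finally I would invoke the skew-adjointness of $\overline{\partial}/\partial\tau$, property (P)$_2$. The additional hypothesis $u_1\in\mathcal{W}$ guarantees $\overline{\partial}u_1/\partial\tau\in\mathcal{V}'$, so the pairing $[\overline{\partial}u_1/\partial\tau,\psi]$ is meaningful and
\[
\left[u_1,\frac{\overline{\partial}\psi}{\partial\tau}\right]=-\left[\frac{\overline{\partial}u_1}{\partial\tau},\psi\right]
\]
holds for a.e.\ $(x,t,\omega)$. Integrating this identity over $Q_T\times\bar\Omega$ and combining with the vanishing of the first term gives the claimed limit. The only substantive point---hence the place I expect any difficulty---is the verification that $\partial\psi_1/\partial\tau$ remains a legitimate test function for Lemma~\ref{l5.1}, i.e.\ that it keeps zero $y$-mean so that the hidden representation $\psi_1=\Delta_y\phi$ underlying that lemma is still available; the hypothesis $u_1\in\mathcal{W}$ then does exactly the work of converting the resulting duality pairing into the desired form.
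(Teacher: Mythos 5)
Your proof is correct and follows essentially the same route as the paper: the chain-rule split $\varepsilon\,\partial_t\psi_1^{\varepsilon}=\varepsilon(\partial_t\psi_1)^{\varepsilon}+\varepsilon^{-1}(\partial_\tau\psi_1)^{\varepsilon}$, the observation that $\partial\psi_1/\partial\tau$ is still a representative of a test function admissible in Lemma \ref{l5.1}, and the skew-adjointness of $\overline{\partial}/\partial\tau$ to move the derivative onto $u_1$. No gaps to report.
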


\begin{proof}
We have 
\begin{eqnarray*}
\int_{Q_{T}\times \bar{\Omega}}\varepsilon u_{\varepsilon }\frac{\partial
\psi _{1}^{\varepsilon }}{\partial t}dxdtd\bar{\mathbb{P}} &=&\varepsilon
\int_{Q_{T}\times \bar{\Omega}}u_{\varepsilon }\left( \frac{\partial \psi
_{1}}{\partial t}\right) ^{\varepsilon }dxdtd\bar{\mathbb{P}} \\
&&+\frac{1}{\varepsilon }\int_{Q_{T}\times \bar{\Omega}}u_{\varepsilon
}\left( \frac{\partial \psi _{1}}{\partial \tau }\right) ^{\varepsilon }dxdtd%
\bar{\mathbb{P}}.
\end{eqnarray*}%
Since $\frac{\partial \psi _{1}}{\partial \tau }$ is a representative of
some function in $B(\bar{\Omega})\otimes \mathcal{C}_{0}^{\infty
}(Q_{T})\otimes (\mathcal{D}_{AP}(\mathbb{R}_{\tau })\otimes \lbrack 
\mathcal{D}_{AP}(\mathbb{R}_{y}^{N})/\mathbb{C}])$, we infer from Lemma \ref%
{l5.1} that, as $E^{\prime }\ni \varepsilon \rightarrow 0$, 
\begin{eqnarray*}
&&\int_{Q_{T}\times \bar{\Omega}}\varepsilon u_{\varepsilon }\frac{\partial
\psi _{1}^{\varepsilon }}{\partial t}dxdtd\bar{\mathbb{P}} \\
&\rightarrow &\int_{Q_{T}\times \bar{\Omega}}\left[ \int_{\mathcal{K}_{\tau
}}\left\langle \widehat{u}_{1}(x,t,\cdot ,s_{0},\omega ),\partial _{0}%
\widehat{\psi }(x,t,\cdot ,s_{0},\omega ))\right\rangle d\beta _{\tau
}(s_{0})\right] dxdtd\bar{\mathbb{P}}.
\end{eqnarray*}%
But 
\begin{eqnarray*}
&&\int_{\mathcal{K}_{\tau }}\left\langle \widehat{u}_{1}(x,t,\cdot
,s_{0},\omega ),\partial _{0}\widehat{\psi }(x,t,\cdot ,s_{0},\omega
))\right\rangle d\beta _{\tau }(s_{0}) \\
&=&\left[ u_{1}(x,t,\cdot ,\cdot ,\omega ),\frac{\overline{\partial }\psi }{%
\partial \tau }(x,t,\cdot ,\cdot ,\omega ))\right] \\
&=&-\left[ \frac{\overline{\partial }u_{1}}{\partial \tau }(x,t,\cdot ,\cdot
,\omega ),\psi (x,t,\cdot ,\cdot ,\omega ))\right] ,
\end{eqnarray*}%
the last equality coming from the fact that $\overline{\partial }/\partial
\tau $ is skew adjoint.
\end{proof}

We will also need the following

\begin{lemma}
\label{l5.2}Let $g:\mathbb{R}_{y}^{N}\times \mathbb{R}_{\tau }\times \mathbb{%
R}_{u}\rightarrow \mathbb{R}$ be a function verifying the following
conditions:

\begin{itemize}
\item[(i)] $\left\vert \partial _{u}g(y,\tau ,u)\right\vert \leq C$

\item[(ii)] $g(\cdot ,\cdot ,u)\in AP(\mathbb{R}_{y,\tau }^{N+1})$.
\end{itemize}

\noindent Let $(u_{\varepsilon })_{\varepsilon }$ be a sequence in $%
L^{2}(Q_{T}\times \bar{\Omega})$ such that $u_{\varepsilon }\rightarrow
u_{0} $ in $L^{2}(Q_{T}\times \bar{\Omega})$ as $\varepsilon \rightarrow 0$
where $u_{0}\in L^{2}(Q_{T}\times \bar{\Omega})$. Then, setting $%
g^{\varepsilon }(u_{\varepsilon })(x,t,\omega )=g(x/\varepsilon
,t/\varepsilon ^{2},u_{\varepsilon }(x,t,\omega ))$ we have, as $\varepsilon
\rightarrow 0$, 
\begin{equation*}
g^{\varepsilon }(u_{\varepsilon })\rightarrow g(\cdot ,\cdot ,u_{0})\text{
in }L^{2}(Q_{T}\times \bar{\Omega})\text{-weak }\Sigma .
\end{equation*}
\end{lemma}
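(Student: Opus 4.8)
The plan is to reduce the statement to the two building blocks already available, namely the Lipschitz character of $g$ in its last argument and the elementary strong $\Sigma$-convergence of oscillating test functions recorded in Remark \ref{r3.1}(2). Concretely, I would write
\begin{equation*}
g^{\varepsilon}(u_{\varepsilon})=\left( g^{\varepsilon}(u_{\varepsilon})-g^{\varepsilon}(u_{0})\right) +g^{\varepsilon}(u_{0}),\qquad g^{\varepsilon}(u_{0})(x,t,\omega)=g\left( \tfrac{x}{\varepsilon},\tfrac{t}{\varepsilon^{2}},u_{0}(x,t,\omega)\right),
\end{equation*}
and treat the two summands separately, exploiting that weak $\Sigma$-convergence is linear in the sequence and that a sequence converging strongly to $0$ in $L^{2}(Q_{T}\times\bar{\Omega})$ is weakly $\Sigma$-convergent to $0$.

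First I would dispose of the difference. Integrating hypothesis (i) in the variable $u$ gives the global Lipschitz estimate $|g(y,\tau,u_{1})-g(y,\tau,u_{2})|\leq C|u_{1}-u_{2}|$, whence $\|g^{\varepsilon}(u_{\varepsilon})-g^{\varepsilon}(u_{0})\|_{L^{2}(Q_{T}\times\bar{\Omega})}\leq C\|u_{\varepsilon}-u_{0}\|_{L^{2}(Q_{T}\times\bar{\Omega})}\to 0$ by the strong convergence assumption. For any test function $f\in B(\bar{\Omega};L^{2}(Q_{T};A))$, writing $f^{\varepsilon}(x,t,\omega)=f(x,t,x/\varepsilon,t/\varepsilon^{2},\omega)$, the Cauchy--Schwarz inequality yields
\begin{equation*}
\left| \int_{Q_{T}\times\bar{\Omega}}\left( g^{\varepsilon}(u_{\varepsilon})-g^{\varepsilon}(u_{0})\right) f^{\varepsilon}\,dxdtd\bar{\mathbb{P}}\right| \leq \left\| g^{\varepsilon}(u_{\varepsilon})-g^{\varepsilon}(u_{0})\right\|_{L^{2}(Q_{T}\times\bar{\Omega})}\left\| f^{\varepsilon}\right\|_{L^{2}(Q_{T}\times\bar{\Omega})}.
\end{equation*}
Since the mean-value property forces $\|f^{\varepsilon}\|_{L^{2}(Q_{T}\times\bar{\Omega})}\to\|\widehat{f}\|_{L^{2}(Q_{T}\times\bar{\Omega}\times\mathcal{K})}$ and thus keeps $\|f^{\varepsilon}\|_{L^{2}}$ bounded, the right-hand side tends to $0$. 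Hence $g^{\varepsilon}(u_{\varepsilon})-g^{\varepsilon}(u_{0})\to 0$ in $L^{2}(Q_{T}\times\bar{\Omega})$-weak $\Sigma$.

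It remains to handle $g^{\varepsilon}(u_{0})$, and this is where the main obstacle lies. I would set $h(x,t,y,\tau,\omega)=g(y,\tau,u_{0}(x,t,\omega))$, so that $g^{\varepsilon}(u_{0})=h^{\varepsilon}$ identically, and then invoke Remark \ref{r3.1}(2): once $h$ is known to belong to $L^{2}(Q_{T}\times\bar{\Omega};AP(\mathbb{R}_{y,\tau}^{N+1}))$, the sequence $(h^{\varepsilon})$ strongly $\Sigma$-converges (and a fortiori weakly $\Sigma$-converges) to $\varrho(h)=g(\cdot,\cdot,u_{0})$. The delicate point is precisely to certify that $h$ is such an element. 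For integrability, combining (i) and (ii) gives $|g(y,\tau,u)|\leq |g(y,\tau,0)|+C|u|$; as $g(\cdot,\cdot,0)\in AP(\mathbb{R}_{y,\tau}^{N+1})$ is bounded, say by $M_{0}$, we get $\|g(\cdot,\cdot,u_{0}(x,t,\omega))\|_{\infty}\leq M_{0}+C|u_{0}(x,t,\omega)|$, which lies in $L^{2}(Q_{T}\times\bar{\Omega})$ because $u_{0}$ does. For strong measurability, the Lipschitz bound shows that the map $\mathbb{R}\ni u\mapsto g(\cdot,\cdot,u)\in AP(\mathbb{R}_{y,\tau}^{N+1})$ is (Lipschitz) continuous, so composing it with the measurable $u_{0}\colon Q_{T}\times\bar{\Omega}\to\mathbb{R}$ produces a measurable map whose essential range sits in the separable set $g(\cdot,\cdot,\mathbb{R})$; by the Pettis criterion $h$ is strongly measurable. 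With $h\in L^{2}(Q_{T}\times\bar{\Omega};AP(\mathbb{R}_{y,\tau}^{N+1}))$ established, adding the two weak $\Sigma$-limits gives $g^{\varepsilon}(u_{\varepsilon})\to g(\cdot,\cdot,u_{0})$ in $L^{2}(Q_{T}\times\bar{\Omega})$-weak $\Sigma$, as claimed.
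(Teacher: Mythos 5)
Your proposal is correct and follows essentially the same route as the paper: the same splitting $g^{\varepsilon}(u_{\varepsilon})=\bigl(g^{\varepsilon}(u_{\varepsilon})-g^{\varepsilon}(u_{0})\bigr)+g^{\varepsilon}(u_{0})$, the Lipschitz bound derived from (i) to kill the first term against any admissible test function, and the observation that $(x,t,y,\tau ,\omega )\mapsto g(y,\tau ,u_{0}(x,t,\omega ))$ belongs to $L^{2}(Q_{T}\times \bar{\Omega};AP(\mathbb{R}_{y,\tau }^{N+1}))$ so that $g^{\varepsilon }(u_{0})$ weakly $\Sigma $-converges to $g(\cdot ,\cdot ,u_{0})$. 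The only (harmless) difference is that you justify the membership in $L^{2}(Q_{T}\times \bar{\Omega};AP)$ in more detail (growth bound plus Pettis measurability) and invoke Remark \ref{r3.1}(2) where the paper invokes Remark \ref{r3.0}.
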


\begin{proof}
Assumption (i) implies the Lipschitz condition 
\begin{equation}
\left| g(y,\tau ,u)-g(y,\tau ,v)\right| \leq C\left| u-v\right| \text{\ for
all }y,\tau ,u,v.  \label{5.0}
\end{equation}%
Next, observe that from (ii) and (\ref{5.0}), the function $(x,t,y,\tau
,\omega )\mapsto g(y,\tau ,u_{0}(x,t,\omega ))$ lies in $L^{2}(Q_{T}\times 
\bar{\Omega};AP(\mathbb{R}_{y,\tau }^{N+1}))$, so that by Remark \ref{r3.0},
we have $g^{\varepsilon }(u_{0})\rightarrow g(\cdot ,\cdot ,u_{0})$ in $%
L^{2}(Q_{T}\times \bar{\Omega})$-weak $\Sigma $ as $\varepsilon \rightarrow
0 $. Now, let $f\in B(\bar{\Omega};L^{2}(Q_{T};AP(\mathbb{R}_{y,\tau
}^{N+1}))) $; then 
\begin{eqnarray*}
&&\int_{Q_{T}\times \bar{\Omega}}g^{\varepsilon }(u_{\varepsilon
})f^{\varepsilon }dxdtd\bar{\mathbb{P}}-\iint_{Q_{T}\times \bar{\Omega}%
\times \mathcal{K}}\widehat{g}(\cdot ,\cdot ,u_{0})\widehat{f}dxdtd\bar{%
\mathbb{P}}d\beta \\
&=&\int_{Q_{T}\times \bar{\Omega}}(g^{\varepsilon }(u_{\varepsilon
})-g^{\varepsilon }(u_{0}))f^{\varepsilon }dxdtd\bar{\mathbb{P}}%
+\int_{Q_{T}\times \bar{\Omega}}g^{\varepsilon }(u_{0})f^{\varepsilon }dxdtd%
\bar{\mathbb{P}} \\
&&-\iint_{Q_{T}\times \bar{\Omega}\times \mathcal{K}}\widehat{g}(\cdot
,\cdot ,u_{0})\widehat{f}dxdtd\bar{\mathbb{P}}d\beta .
\end{eqnarray*}%
Using the inequality 
\begin{equation*}
\left| \int_{Q_{T}\times \bar{\Omega}}(g^{\varepsilon }(u_{\varepsilon
})-g^{\varepsilon }(u_{0}))f^{\varepsilon }dxdtd\bar{\mathbb{P}}\right| \leq
C\left\| u_{\varepsilon }-u_{0}\right\| _{L^{2}(Q_{T}\times \bar{\Omega}%
)}\left\| f^{\varepsilon }\right\| _{L^{2}(Q_{T}\times \bar{\Omega})}
\end{equation*}%
in conjunction with the above convergence results leads at once to the
result.
\end{proof}

\begin{remark}
\label{r5.1}\emph{From the Lipschitz property of the function }$g$\emph{\
above we may get more information on the limit of the sequence }$%
g^{\varepsilon }(u_{\varepsilon })$\emph{. Indeed, since }$\left|
g^{\varepsilon }(u_{\varepsilon })-g^{\varepsilon }(u_{0})\right| \leq
C\left| u_{\varepsilon }-u_{0}\right| $\emph{, we deduce the following
convergence result: }%
\begin{equation*}
g^{\varepsilon }(u_{\varepsilon })\rightarrow \widetilde{g}(u_{0})\text{\ in 
}L^{2}(Q_{T}\times \bar{\Omega})\text{ as }\varepsilon \rightarrow 0
\end{equation*}%
\emph{where }$\widetilde{g}(u_{0})(x,t,\omega )=\int_{\mathcal{K}}\widehat{g}%
(s,s_{0},u_{0}(x,t,\omega ))d\beta $\emph{, so that we can derive the
existence of a subsequence of }$g^{\varepsilon }(u_{\varepsilon })$\emph{\
that converges a.e. in }$Q_{T}\times \bar{\Omega}$\emph{\ to }$\widetilde{g}%
(u_{0})$\emph{.}
\end{remark}

\bigskip We will need the following spaces: 
\begin{equation*}
\mathbb{F}_{0}^{1}=L^{2}(\bar{\Omega}\times \left( 0,T\right)
;H_{0}^{1}(Q))\times L^{2}(Q_{T}\times \bar{\Omega};\mathcal{W})
\end{equation*}%
and 
\begin{equation*}
\mathcal{F}_{0}^{\infty }=[B(\bar{\Omega})\otimes \mathcal{C}_{0}^{\infty
}(Q_{T})]\times \lbrack B(\bar{\Omega})\otimes \mathcal{C}_{0}^{\infty
}(Q_{T})\otimes \mathcal{E}]
\end{equation*}%
where $\mathcal{W}=\left\{ v\in \mathcal{V}:\overline{\partial }v/\partial
\tau \in \mathcal{V}^{\prime }\right\} $ with $\mathcal{V}=\mathcal{B}%
_{AP}^{2}(\mathbb{R}_{\tau };\mathcal{B}_{\#AP}^{1,2}(\mathbb{R}_{y}^{N}))$,
and $\mathcal{E}=\mathcal{D}_{AP}(\mathbb{R}_{\tau })\otimes \lbrack 
\mathcal{D}_{AP}(\mathbb{R}_{y}^{N})/\mathbb{C}]$. $\mathbb{F}_{0}^{1}$ is a
Hilbert space under the norm 
\begin{equation*}
\left\Vert (u_{0},u_{1})\right\Vert _{\mathbb{F}_{0}^{1}}=\left\Vert
u_{0}\right\Vert _{L^{2}(\bar{\Omega}\times \left( 0,T\right)
;H_{0}^{1}(Q))}+\left\Vert u_{1}\right\Vert _{L^{2}(Q_{T}\times \bar{\Omega};%
\mathcal{W})}.
\end{equation*}%
Moreover, since $B(\bar{\Omega})$ is dense in $L^{2}(\bar{\Omega})$, it is
an easy matter to check that $\mathcal{F}_{0}^{\infty }$ is dense in $%
\mathbb{F}_{0}^{1}$.

\subsection{Global homogenized problem}

Let $(u_{\varepsilon _{j}})$ be the sequence determined in Section 4 and
satisfying Eq. (\ref{4.10}). It therefore satisfies the a priori estimates (%
\ref{4.3})-(\ref{4.4}), so that, by the diagonal process, one can find a
subsequence of $(u_{\varepsilon _{j}})_{j}$ not relabeled, which weakly
converges in $L^{2}(\bar{\Omega};L^{2}(0,T;H_{0}^{1}(Q)))$ to $u_{0}$
determined by the Skorokhod's theorem and satisfying (\ref{4.9}). From
Theorem \ref{t3.3}, we infer the existence of a function $u_{1}\in L^{2}(%
\bar{\Omega};L^{2}(Q_{T};\mathcal{B}_{AP}^{2}(\mathbb{R}_{\tau };\mathcal{B}%
_{\#AP}^{1,2}(\mathbb{R}_{y}^{N}))))$ such that the convergence results 
\begin{equation}
u_{\varepsilon _{j}}\rightarrow u_{0}\text{ in }L^{2}(Q_{T})\text{ almost
surely}  \label{5.2}
\end{equation}%
and 
\begin{equation}
\frac{\partial u_{\varepsilon _{j}}}{\partial x_{i}}\rightarrow \frac{%
\partial u_{0}}{\partial x_{i}}+\frac{\overline{\partial }u_{1}}{\partial
y_{i}}\text{ in }L^{2}(Q_{T}\times \bar{\Omega})\text{-weak }\Sigma \text{ }%
(1\leq i\leq N)  \label{5.3}
\end{equation}%
hold when $\varepsilon _{j}\rightarrow 0$. The following result holds.

\begin{proposition}
\label{p5.1}The couple $(u_{0},u_{1})\in \mathbb{F}_{0}^{1}$ determined
above solves the following variational problem 
\begin{equation}
\left\{ 
\begin{array}{l}
-\int_{Q_{T}\times \bar{\Omega}}u_{0}\psi _{0}^{\prime }dxdtd\bar{\mathbb{P}}%
+\int_{Q_{T}\times \bar{\Omega}}\left[ \frac{\overline{\partial }u_{1}}{%
\partial \tau },\psi _{1}\right] dxdtd\bar{\mathbb{P}} \\ 
=-\iint_{Q_{T}\times \bar{\Omega}\times \mathcal{K}}\widehat{a}%
(Du_{0}+\partial \widehat{u}_{1})\cdot (D\psi _{0}+\partial \widehat{\psi }%
_{1})dxdtd\bar{\mathbb{P}}d\beta \\ 
+\iint_{Q_{T}\times \bar{\Omega}\times \mathcal{K}}\widehat{g}(s,s_{0},u_{0})%
\widehat{\psi }_{1}dxdtd\bar{\mathbb{P}}d\beta -\iint_{Q_{T}\times \bar{%
\Omega}\times \mathcal{K}}\widehat{G}(s,s_{0},u_{0})\cdot D\psi _{0}dxdtd%
\bar{\mathbb{P}}d\beta \\ 
-\iint_{Q_{T}\times \bar{\Omega}\times \mathcal{K}}\left( \widehat{\partial
_{u}G}(s,s_{0},u_{0})\cdot (Du_{0}+\partial \widehat{u}_{1})\right) \psi
_{0}dxdtd\bar{\mathbb{P}}d\beta \\ 
+\iint_{Q_{T}\times \bar{\Omega}\times \mathcal{K}}\widehat{M}%
(s,s_{0},u_{0})\psi _{0}d\bar{W}dxd\bar{\mathbb{P}}d\beta \text{\ \ for all }%
(\psi _{0},\psi _{1})\in \mathcal{F}_{0}^{\infty }\text{.}%
\end{array}%
\right.  \label{5.1}
\end{equation}
\end{proposition}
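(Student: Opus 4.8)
The plan is to test the weak formulation \eqref{4.10} against a two–scale oscillating function and then pass to the limit term by term. Given $(\psi_0,\psi_1)\in\mathcal{F}_0^\infty$, let $\psi_{1,1}$ be the representative of $\psi_1$ and put $a^\varepsilon=a(x/\varepsilon,t/\varepsilon^2)$ and
\[
\Phi_\varepsilon(x,t,\omega)=\psi_0(x,t,\omega)+\varepsilon\,\psi_{1,1}^\varepsilon(x,t,\omega),\qquad \psi_{1,1}^\varepsilon(x,t,\omega)=\psi_{1,1}\!\left(x,t,\tfrac{x}{\varepsilon},\tfrac{t}{\varepsilon^2},\omega\right).
\]
Since the factor $\mathcal{C}_0^\infty(Q_T)$ forces compact support in $(0,T)$, applying the It\^o product rule to $(u_{\varepsilon_j}(t),\Phi_\varepsilon(t))$ (whose only martingale part comes from $u_{\varepsilon_j}$), integrating by parts in $x$, and then integrating over $\bar\Omega$, I would obtain
\begin{multline*}
-\int_{Q_T\times\bar\Omega}u_{\varepsilon_j}\,\partial_t\Phi_\varepsilon\,dxdtd\bar{\mathbb{P}}
=-\int_{Q_T\times\bar\Omega}a^\varepsilon Du_{\varepsilon_j}\cdot D\Phi_\varepsilon\,dxdtd\bar{\mathbb{P}}\\
+\int_{Q_T\times\bar\Omega}\tfrac{1}{\varepsilon_j}g^\varepsilon(u_{\varepsilon_j})\,\Phi_\varepsilon\,dxdtd\bar{\mathbb{P}}
+\int_{\bar\Omega}\!\int_0^T\!\left(M^\varepsilon(u_{\varepsilon_j}),\Phi_\varepsilon\right)dW^{\varepsilon_j}\,d\bar{\mathbb{P}},
\end{multline*}
and everything reduces to identifying the $E'\ni\varepsilon\to0$ limit of the four groups of terms.

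For the left-hand side I would split $\partial_t\Phi_\varepsilon=\partial_t\psi_0+\varepsilon\,\partial_t(\psi_{1,1}^\varepsilon)$; the $\partial_t\psi_0$ part tends (by \eqref{4.9}) to $-\int_{Q_T\times\bar\Omega}u_0\psi_0'\,dxdtd\bar{\mathbb{P}}$, while the remaining part is exactly the quantity governed by Corollary \ref{c5.1} and converges to $\int_{Q_T\times\bar\Omega}[\overline{\partial}u_1/\partial\tau,\psi_1]\,dxdtd\bar{\mathbb{P}}$, giving the two terms on the left of \eqref{5.1}. For the diffusion term I would write $D\Phi_\varepsilon=D\psi_0+(D_y\psi_{1,1})^\varepsilon+\varepsilon(D_x\psi_{1,1})^\varepsilon$ (the last summand negligible), and combine the weak $\Sigma$–convergence \eqref{5.3} of $Du_{\varepsilon_j}$ towards $Du_0+\partial\widehat u_1$ with the strong $\Sigma$–convergence of $a^\varepsilon$ (legitimate since $a_{ij}\in B_{AP}^2\cap L^\infty$) and of the smooth oscillating test field, via Theorem \ref{t3.4}/Corollary \ref{c3.4}, to reach $-\iint_{Q_T\times\bar\Omega\times\mathcal{K}}\widehat a(Du_0+\partial\widehat u_1)\cdot(D\psi_0+\partial\widehat\psi_1)\,dxdtd\bar{\mathbb{P}}d\beta$. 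The singular reaction term is handled through $\Phi_\varepsilon$: its $\varepsilon\psi_{1,1}^\varepsilon$–part equals $\int g^\varepsilon(u_{\varepsilon_j})\psi_{1,1}^\varepsilon$, which by Lemma \ref{l5.2} converges to $\iint\widehat g(s,s_0,u_0)\widehat\psi_1$; for its $\psi_0$–part I would invoke the identity \eqref{4d}, $\tfrac{1}{\varepsilon}g^\varepsilon(u_{\varepsilon_j})=\Div G^\varepsilon(u_{\varepsilon_j})-(\partial_uG)^\varepsilon(u_{\varepsilon_j})\cdot Du_{\varepsilon_j}$, integrate the divergence by parts to obtain $-\int G^\varepsilon(u_{\varepsilon_j})\cdot D\psi_0\to-\iint\widehat G(s,s_0,u_0)\cdot D\psi_0$ (Lemma \ref{l5.2} applied to $G$), and treat the last summand by Corollary \ref{c3.4}, since $(\partial_uG)^\varepsilon(u_{\varepsilon_j})$ is bounded in $L^\infty$ by \eqref{4.0} and strongly $\Sigma$–converges while $Du_{\varepsilon_j}$ converges weakly $\Sigma$; this produces $-\iint(\widehat{\partial_uG}(s,s_0,u_0)\cdot(Du_0+\partial\widehat u_1))\psi_0$. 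Together these account for the third through sixth terms of \eqref{5.1}.

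The genuinely delicate point, which I expect to be the main obstacle, is the stochastic term. After discarding the $\varepsilon\psi_{1,1}^\varepsilon$ contribution (which vanishes by the Burkh\"older--Davis--Gundy inequality and the prefactor $\varepsilon$), one must pass to the limit in $\int_{\bar\Omega}\int_0^T(M^\varepsilon(u_{\varepsilon_j}),\psi_0)\,dW^{\varepsilon_j}\,d\bar{\mathbb{P}}$, where the $\Sigma$–convergence machinery does not apply directly to a stochastic integral. Instead I would combine the strong $L^2(Q_T\times\bar\Omega)$–convergence $M^\varepsilon(u_{\varepsilon_j})\to\widetilde M(u_0)=\int_{\mathcal{K}}\widehat M(s,s_0,u_0)\,d\beta$ supplied by Remark \ref{r5.1} with the almost sure uniform convergence $W^{\varepsilon_j}\to\bar W$ of \eqref{4.8}, and apply a convergence lemma for stochastic integrals (of the type already used in the Skorokhod/tightness step leading to \eqref{4.10}), obtaining the limit $\int_{\bar\Omega}\int_0^T(\widetilde M(u_0),\psi_0)\,d\bar W\,d\bar{\mathbb{P}}=\iint_{Q_T\times\bar\Omega\times\mathcal{K}}\widehat M(s,s_0,u_0)\psi_0\,d\bar W\,dx\,d\bar{\mathbb{P}}d\beta$. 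Verifying the adaptedness of the integrands on the new probability space and the convergence of the martingale part is the crux of this step.

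Assembling the four limits shows that the limit of the left-hand side of the tested identity equals the left-hand side of \eqref{5.1} and that the limit of its right-hand side equals the right-hand side of \eqref{5.1}, so \eqref{5.1} holds for every $(\psi_0,\psi_1)\in\mathcal{F}_0^\infty$; should one later need it for all of $\mathbb{F}_0^1$, this follows from the density of $\mathcal{F}_0^\infty$ in $\mathbb{F}_0^1$ noted above together with the continuity of each term of \eqref{5.1} in $(\psi_0,\psi_1)$.
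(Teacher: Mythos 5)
Your proposal is correct and follows essentially the same route as the paper: the same corrector test function $\Phi_\varepsilon=\psi_0+\varepsilon\psi_{1,1}^\varepsilon$, the same splitting of the singular reaction term via the representation $\tfrac1\varepsilon g^\varepsilon=\Div G^\varepsilon-(\partial_uG)^\varepsilon\cdot Du_\varepsilon$ combined with Lemma \ref{l5.2}, Corollary \ref{c5.1} for the time term, Corollary \ref{c3.4} for the diffusion term, and Remark \ref{r5.1} together with \eqref{4.8} for the stochastic integral. You are in fact slightly more explicit than the paper about the martingale-convergence and adaptedness issues in the last step, which the paper dispatches in one sentence.
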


\begin{proof}
In what follows, we omit the index $j$ momentarily from the sequence $%
\varepsilon _{j}$. So we will merely write $\varepsilon $ instead of $%
\varepsilon _{j}$. With this in mind, we set 
\begin{equation*}
\Phi _{\varepsilon }(x,t,\omega )=\psi _{0}(x,t,\omega )+\varepsilon \psi
\left( x,t,\frac{x}{\varepsilon },\frac{t}{\varepsilon ^{2}},\omega \right) 
\text{,\ }(x,t,\omega )\in Q_{T}\times \bar{\Omega}
\end{equation*}%
where $(\psi _{0},\psi _{1})\in \mathcal{F}_{0}^{\infty }$ with $\psi $
being a representative of $\psi _{1}$. Using $\Phi _{\varepsilon }$ as a
test function in the variational formulation of (\ref{4.10}) we get 
\begin{eqnarray}
-\int_{Q_{T}\times \bar{\Omega}}u_{\varepsilon }\frac{\partial \Phi
_{\varepsilon }}{\partial t}dxdtd\bar{\mathbb{P}} &=&-\int_{Q_{T}\times \bar{%
\Omega}}a^{\varepsilon }Du_{\varepsilon }\cdot D\Phi _{\varepsilon }dxdtd%
\bar{\mathbb{P}}  \label{5.4} \\
&&+\frac{1}{\varepsilon }\int_{Q_{T}\times \bar{\Omega}}g^{\varepsilon
}(u_{\varepsilon })\Phi _{\varepsilon }dxdtd\bar{\mathbb{P}}  \notag \\
&&\mathbb{+}\int_{Q_{T}\times \bar{\Omega}}M^{\varepsilon }(u_{\varepsilon
})\Phi _{\varepsilon }dxdW^{\varepsilon }d\bar{\mathbb{P}}  \notag
\end{eqnarray}%
where here and henceforth, we use the notation $a^{\varepsilon
}=a(x/\varepsilon ,t/\varepsilon ^{2})$, $\psi ^{\varepsilon }=\psi
(x,t,x/\varepsilon ,t/\varepsilon ^{2},\omega )$, $M^{\varepsilon
}(u_{\varepsilon })=M(x/\varepsilon ,t/\varepsilon ^{2},u_{\varepsilon })$
and $g^{\varepsilon }(u_{\varepsilon })=g(x/\varepsilon ,t/\varepsilon
^{2},u_{\varepsilon })$. We will consider the terms in (\ref{5.4})
respectively.

We have 
\begin{eqnarray*}
\frac{1}{\varepsilon }\int_{Q_{T}\times \bar{\Omega}}g^{\varepsilon
}(u_{\varepsilon })\Phi _{\varepsilon }dxdtd\bar{\mathbb{P}} &=&\frac{1}{%
\varepsilon }\int_{Q_{T}\times \bar{\Omega}}g^{\varepsilon }(u_{\varepsilon
})\psi _{0}dxdtd\bar{\mathbb{P}} \\
&&+\int_{Q_{T}\times \bar{\Omega}}g^{\varepsilon }(u_{\varepsilon })\psi
^{\varepsilon }dxdtd\bar{\mathbb{P}} \\
&=&I_{\varepsilon }^{1}+I_{\varepsilon }^{2}\text{.}
\end{eqnarray*}%
Lemma \ref{l5.2} and convergence result (\ref{5.2}) imply 
\begin{equation*}
I_{\varepsilon }^{2}\rightarrow \iint_{Q_{T}\times \bar{\Omega}\times 
\mathcal{K}}\widehat{g}(s,s_{0},u_{0})\widehat{\psi }_{1}dxdtd\bar{\mathbb{P}%
}d\beta
\end{equation*}%
since $\widehat{\psi }_{1}=\mathcal{G}_{1}\circ \psi _{1}=\mathcal{G}%
_{1}\circ (\varrho (\psi ))=\mathcal{G}\circ \psi =\widehat{\psi }$. For $%
I_{\varepsilon }^{1}$, we know from assumption \textbf{A4} that 
\begin{equation*}
\frac{1}{\varepsilon }g\left( \frac{x}{\varepsilon },\frac{t}{\varepsilon
^{2}},u_{\varepsilon }\right) =\text{div}_{x}\left[ G\left( \frac{x}{%
\varepsilon },\frac{t}{\varepsilon ^{2}},u_{\varepsilon }\right) \right]
-\partial _{u}G\left( \frac{x}{\varepsilon },\frac{t}{\varepsilon ^{2}}%
,u_{\varepsilon }\right) \cdot Du_{\varepsilon },
\end{equation*}%
in such a way that 
\begin{equation*}
I_{\varepsilon }^{1}=-\int_{Q_{T}\times \bar{\Omega}}G^{\varepsilon
}(u_{\varepsilon })\cdot D\psi _{0}dxdtd\bar{\mathbb{P}}-\int_{Q_{T}\times 
\bar{\Omega}}\left[ \partial _{u}G\left( \frac{x}{\varepsilon },\frac{t}{%
\varepsilon ^{2}},u_{\varepsilon }\right) \cdot Du_{\varepsilon }\right]
\psi _{0}dxdtd\bar{\mathbb{P}}.
\end{equation*}%
Once again, owing to assumption \textbf{A4} (see the inequalities (\ref{4.0}%
) and (\ref{4.0'}) therein) we deduce from Lemma \ref{l5.2}, convergence
results (\ref{5.2}) and (\ref{5.3}) that 
\begin{eqnarray*}
I_{\varepsilon }^{1} &\rightarrow &-\iint_{Q_{T}\times \bar{\Omega}\times 
\mathcal{K}}\widehat{G}(s,s_{0},u_{0})\cdot D\psi _{0}dxdtd\bar{\mathbb{P}}%
d\beta \\
&&-\iint_{Q_{T}\times \bar{\Omega}\times \mathcal{K}}\left[ \widehat{%
\partial _{u}G}(s,s_{0},u_{0})\cdot (Du_{0}+\partial \widehat{u}_{1})\right]
\psi _{0}dxdtd\bar{\mathbb{P}}d\beta .
\end{eqnarray*}%
Next, we have 
\begin{equation*}
\int_{Q_{T}\times \bar{\Omega}}u_{\varepsilon }\frac{\partial \Phi
_{\varepsilon }}{\partial t}dxdtd\bar{\mathbb{P}}=\int_{Q_{T}\times \bar{%
\Omega}}u_{\varepsilon }\frac{\partial \psi _{0}}{\partial t}dxdtd\bar{%
\mathbb{P}}+\int_{Q_{T}\times \bar{\Omega}}\varepsilon u_{\varepsilon }\frac{%
\partial \psi ^{\varepsilon }}{\partial t}dxdtd\bar{\mathbb{P}}
\end{equation*}%
which, from Corollary \ref{c5.1} leads to 
\begin{eqnarray*}
\int_{Q_{T}\times \bar{\Omega}}u_{\varepsilon }\frac{\partial \Phi
_{\varepsilon }}{\partial t}dxdtd\bar{\mathbb{P}} &\rightarrow
&\int_{Q_{T}\times \bar{\Omega}}u_{0}\frac{\partial \psi _{0}}{\partial t}%
dxdtd\bar{\mathbb{P}} \\
&&-\int_{Q_{T}\times \bar{\Omega}}\left[ \frac{\overline{\partial }u_{1}}{%
\partial \tau }(x,t,\omega ),\psi _{1}(x,t,\omega )\right] dxdtd\bar{\mathbb{%
P}}.
\end{eqnarray*}%
It is an easy exercise to see, using Corollary \ref{c3.4} that 
\begin{equation*}
\int_{Q_{T}\times \bar{\Omega}}a^{\varepsilon }Du_{\varepsilon }\cdot D\Phi
_{\varepsilon }dxdtd\bar{\mathbb{P}}\rightarrow \iint_{Q_{T}\times \bar{%
\Omega}\times \mathcal{K}}\widehat{a}(Du_{0}+\partial \widehat{u}_{1})\cdot
(D\psi _{0}+\partial \widehat{\psi }_{1})dxdtd\bar{\mathbb{P}}d\beta .
\end{equation*}%
Next, owing to Remark \ref{r5.1}, assumption \textbf{A5} and the convergence
result (\ref{4.8}) we get 
\begin{equation*}
\int_{Q_{T}\times \bar{\Omega}}M^{\varepsilon }(u_{\varepsilon })\Phi
_{\varepsilon }dxdW^{\varepsilon }d\bar{\mathbb{P}}\rightarrow
\iint_{Q_{T}\times \bar{\Omega}\times \mathcal{K}}\widehat{M}%
(s,s_{0},u_{0})\psi _{0}dxd\bar{W}d\bar{\mathbb{P}}d\beta .
\end{equation*}%
Hence letting $\varepsilon \rightarrow 0$ in (\ref{5.4}) we end up with (\ref%
{5.1}), thereby completing the proof.
\end{proof}

The problem (\ref{5.1}) is called the \textit{global homogenized} problem
for (\ref{4.1}).

\subsection{Homogenized problem}

The problem (\ref{5.1}) is equivalent to the following system: 
\begin{equation}
\left\{ 
\begin{array}{l}
-\int_{Q_{T}\times \bar{\Omega}}\left[ \frac{\overline{\partial }u_{1}}{%
\partial \tau },\psi _{1}\right] dxdtd\bar{\mathbb{P}}-\iint_{Q_{T}\times 
\bar{\Omega}\times \mathcal{K}}\widehat{a}(Du_{0}+\partial \widehat{u}%
_{1})\cdot \partial \widehat{\psi }_{1}dxdtd\bar{\mathbb{P}}d\beta \\ 
+\iint_{Q_{T}\times \bar{\Omega}\times \mathcal{K}}\widehat{g}(s,s_{0},u_{0})%
\widehat{\psi }_{1}dxdtd\bar{\mathbb{P}}d\beta =0 \\ 
\text{for all }\psi _{1}\in B(\bar{\Omega})\otimes \mathcal{C}_{0}^{\infty
}(Q_{T})\otimes \mathcal{E},%
\end{array}%
\right.  \label{5.5}
\end{equation}%
and 
\begin{equation}
\left\{ 
\begin{array}{l}
-\int_{Q_{T}\times \bar{\Omega}}u_{0}\psi _{0}^{\prime }dxdtd\bar{\mathbb{P}}%
=-\iint_{Q_{T}\times \bar{\Omega}\times \mathcal{K}}\widehat{a}%
(Du_{0}+\partial \widehat{u}_{1})\cdot D\psi _{0}dxdtd\bar{\mathbb{P}}d\beta
\\ 
-\iint_{Q_{T}\times \bar{\Omega}\times \mathcal{K}}\widehat{G}%
(s,s_{0},u_{0})\cdot D\psi _{0}dxdtd\bar{\mathbb{P}}d\beta \\ 
-\iint_{Q_{T}\times \bar{\Omega}\times \mathcal{K}}\left( \widehat{\partial
_{u}G}(s,s_{0},u_{0})\cdot (Du_{0}+\partial \widehat{u}_{1})\right) \psi
_{0}dxdtd\bar{\mathbb{P}}d\beta \\ 
+\iint_{Q_{T}\times \bar{\Omega}\times \mathcal{K}}\widehat{M}%
(s,s_{0},u_{0})\psi _{0}d\bar{W}dxd\bar{\mathbb{P}}d\beta \text{\ \ for all }%
\psi _{0}\in B(\bar{\Omega})\otimes \mathcal{C}_{0}^{\infty }(Q_{T})\text{.}%
\end{array}%
\right.  \label{5.6}
\end{equation}%
The following uniqueness result is highlighted.

\begin{proposition}
\label{p5.2}The solution of the variational problem \emph{(\ref{5.5})} is
unique.
\end{proposition}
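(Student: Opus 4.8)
The plan is to use the linearity of (\ref{5.5}) in $u_{1}$ together with the skew-adjointness of $\overline{\partial }/\partial \tau $ and the ellipticity condition \textbf{A1}. First I would take two solutions $u_{1}$ and $u_{1}^{\prime }$ of (\ref{5.5}) corresponding to the same fixed data $u_{0}$, and set $w=u_{1}-u_{1}^{\prime }\in L^{2}(Q_{T}\times \bar{\Omega};\mathcal{W})$ (this membership is part of $(u_{0},u_{1})\in \mathbb{F}_{0}^{1}$). Subtracting the two variational identities, the terms $\widehat{a}\,Du_{0}\cdot \partial \widehat{\psi }_{1}$ and $\widehat{g}(s,s_{0},u_{0})\widehat{\psi }_{1}$, which do not involve $u_{1}$, cancel, leaving the homogeneous identity
\begin{equation*}
-\int_{Q_{T}\times \bar{\Omega}}\left[ \frac{\overline{\partial }w}{\partial \tau },\psi _{1}\right] dxdtd\bar{\mathbb{P}}-\iint_{Q_{T}\times \bar{\Omega}\times \mathcal{K}}\widehat{a}\,\partial \widehat{w}\cdot \partial \widehat{\psi }_{1}\,dxdtd\bar{\mathbb{P}}d\beta =0
\end{equation*}
for every $\psi _{1}\in B(\bar{\Omega})\otimes \mathcal{C}_{0}^{\infty }(Q_{T})\otimes \mathcal{E}$.

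Next I would upgrade the class of admissible test functions. The second term of the bilinear form is controlled by $\left\Vert \widehat{a}\right\Vert _{L^{\infty }}$ and the $\mathcal{V}$-norms, while the first is the duality pairing between $\overline{\partial }w/\partial \tau \in \mathcal{V}^{\prime }$ and $\psi _{1}\in \mathcal{V}$, hence bounded by the graph norm of $\mathcal{W}$; thus the whole form is continuous on $L^{2}(Q_{T}\times \bar{\Omega};\mathcal{W})\times L^{2}(Q_{T}\times \bar{\Omega};\mathcal{W})$. Since $B(\bar{\Omega})\otimes \mathcal{C}_{0}^{\infty }(Q_{T})\otimes \mathcal{E}$ is dense in $L^{2}(Q_{T}\times \bar{\Omega};\mathcal{W})$ (this is exactly the density of $\mathcal{F}_{0}^{\infty }$ in $\mathbb{F}_{0}^{1}$ recorded before Subsection 5.2, which in turn rests on property (P)$_{3}$ and the density of $B(\bar{\Omega})$ in $L^{2}(\bar{\Omega})$), the identity above extends to every $\psi _{1}\in L^{2}(Q_{T}\times \bar{\Omega};\mathcal{W})$. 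In particular I may legitimately choose $\psi _{1}=w$.

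With $\psi _{1}=w$, the skew-adjointness of $\overline{\partial }/\partial \tau $ from property (P)$_{2}$ gives $\left[ \overline{\partial }w/\partial \tau ,w\right] =0$ pointwise in $(x,t,\omega )$, so the time-derivative term drops out and I am left with
\begin{equation*}
\iint_{Q_{T}\times \bar{\Omega}\times \mathcal{K}}\widehat{a}\,\partial \widehat{w}\cdot \partial \widehat{w}\,dxdtd\bar{\mathbb{P}}d\beta =0.
\end{equation*}
The ellipticity of \textbf{A1} passes to the Gelfand transform $\widehat{a}$ because the Gelfand transformation is positivity-preserving, so the a.e.\ bound $\sum a_{ij}\zeta _{i}\zeta _{j}\geq \Lambda |\zeta |^{2}$ yields $\sum \widehat{a}_{ij}\zeta _{i}\zeta _{j}\geq \Lambda |\zeta |^{2}$ on $\mathcal{K}$; hence the integrand dominates $\Lambda |\partial \widehat{w}|^{2}\geq 0$ and $\partial \widehat{w}=0$ a.e. Recalling that the norm on $\mathcal{B}_{\#AP}^{1,2}(\mathbb{R}_{y}^{N})$ is precisely $\left( \sum_{i}\Vert \overline{D}_{i,2}\cdot \Vert _{2}^{2}\right) ^{1/2}$, the vanishing of $\partial \widehat{w}=\overline{D}_{y}w$ forces $w=0$ in $L^{2}(Q_{T}\times \bar{\Omega};\mathcal{V})$, that is $u_{1}=u_{1}^{\prime }$.

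I expect the only delicate point to be the continuity-and-density step justifying the substitution $\psi _{1}=w$: one must verify that the unbounded, skew-adjoint operator $\overline{\partial }/\partial \tau $ renders the first bilinear term continuous on the graph space $\mathcal{W}$ rather than merely on $\mathcal{V}$, and invoke the density in the correct ($\mathcal{W}$-)topology, together with the fact that ellipticity survives the Gelfand transform. Once $\psi _{1}=w$ is admissible, the conclusion is an immediate consequence of coercivity.
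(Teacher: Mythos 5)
Your proof is correct and follows essentially the same route as the paper's: subtract two solutions, test against the difference, use the skew-adjointness of $\overline{\partial }/\partial \tau $ to eliminate the time-derivative term, and conclude from the coercivity of $\widehat{a}$ together with the fact that the $\mathcal{B}_{\#AP}^{1,2}(\mathbb{R}_{y}^{N})$-norm is the gradient seminorm. The only cosmetic difference is that the paper first localizes to a pointwise-in-$(x,t,\omega )$ equation by choosing tensor-product test functions $\phi (\omega )\varphi (x,t)w(y,\tau )$ and then invokes the density of $\mathcal{E}$ in $\mathcal{W}$, whereas you keep the integrated form over $Q_{T}\times \bar{\Omega}$ and invoke the density of $\mathcal{F}_{0}^{\infty }$ in $\mathbb{F}_{0}^{1}$; both are legitimate.
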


\begin{proof}
Taking in (\ref{5.5}) $\psi _{1}(x,t,y,\tau ,\omega )=\phi (\omega )\varphi
(x,t)w(y,\tau )$ with $\phi \in B(\bar{\Omega})$, $\varphi \in \mathcal{C}%
_{0}^{\infty }(Q_{T})$ and $w\in \mathcal{E}$, we obtain after mere
computations 
\begin{equation}
\begin{array}{l}
-\left[ \frac{\overline{\partial }u_{1}}{\partial \tau }(x,t,\omega ),w%
\right] -\int_{\mathcal{K}}\widehat{a}(Du_{0}(x,t,\omega )+\partial \widehat{%
u}_{1}(x,t,\omega ))\cdot \partial \widehat{w}d\beta \\ 
\ \ \ +\int_{\mathcal{K}}\widehat{g}(u_{0}(x,t,\omega ))\widehat{w}d\beta =0%
\text{ for all }w\in \mathcal{E}.%
\end{array}
\label{5.7}
\end{equation}%
So, fixing $(x,t,\omega )$, if $u_{1}=u_{1}(x,t,\omega )$ and $%
u_{2}=u_{2}(x,t,\omega )$ are two solutions to (\ref{5.7}), then $%
u=u_{1}-u_{2}$ is solution to 
\begin{equation}
\left[ \frac{\overline{\partial }u}{\partial \tau },w\right] =-\int_{%
\mathcal{K}}\widehat{a}\partial \widehat{u}\cdot \partial \widehat{w}d\beta 
\text{\ for all }w\in \mathcal{E}.  \label{5.8}
\end{equation}%
By the density of $\mathcal{E}$ in $\mathcal{W}$, (\ref{5.8}) still holds
for $w\in \mathcal{W}$. So taking there $w=u$ and using the fact that $%
\overline{\partial }/\partial \tau $ is skew adjoint (which yields $\left[ 
\overline{\partial }u/\partial \tau ,u\right] =0$) we get 
\begin{equation*}
\int_{\mathcal{K}}\widehat{a}\partial \widehat{u}\cdot \partial \widehat{u}%
d\beta =0.
\end{equation*}%
But, since 
\begin{equation*}
\int_{\mathcal{K}}\widehat{a}\partial \widehat{u}\cdot \partial \widehat{u}%
d\beta \geq \Lambda \left\Vert u\right\Vert _{\mathcal{B}_{AP}^{2}(\mathbb{R}%
_{\tau };\mathcal{B}_{\#AP}^{1,2}(\mathbb{R}_{y}^{N}))}^{2},
\end{equation*}%
we are led to $u=0$. Whence the uniqueness of the solution of (\ref{5.5}).
\end{proof}

Let us now deal with some auxiliary equations connected to (\ref{5.5}).

Let $\chi \in (\mathcal{W})^{N}$ and $w_{1}=w_{1}(\cdot ,\cdot ,r)$ (for
fixed $r\in \mathbb{R}$) be determined by the following variational
problems: 
\begin{equation}
\left[ \frac{\overline{\partial }\chi }{\partial \tau },\phi \right] =-\int_{%
\mathcal{K}}\widehat{a}\partial \widehat{\chi }\cdot \partial \widehat{\phi }%
d\beta -\int_{\mathcal{K}}\widehat{a}\cdot \partial \widehat{\phi }d\beta
\;\forall \phi \in \mathcal{W};  \label{5.10}
\end{equation}%
\begin{equation}
\begin{array}{l}
\left[ \frac{\overline{\partial }w_{1}}{\partial \tau },\phi \right] =-\int_{%
\mathcal{K}}\widehat{a}\partial \widehat{w}_{1}\cdot \partial \widehat{\phi }%
d\beta -\int_{\mathcal{K}}\widehat{G}(\cdot ,\cdot ,r)\cdot \partial 
\widehat{\phi }d\beta \\ 
\text{for all }\phi \in \mathcal{W}.%
\end{array}
\label{5.11}
\end{equation}%
Equations (\ref{5.10}) and (\ref{5.11}) are respectively equivalent to the
following equations: 
\begin{equation*}
\frac{\overline{\partial }\chi }{\partial \tau }-\overline{\text{div}}_{y}(a%
\overline{D}_{y}\chi )=\overline{\text{div}}_{y}a\text{ in }\mathcal{W}%
^{\prime },\;\chi \in (\mathcal{W})^{N},
\end{equation*}%
and 
\begin{equation*}
\frac{\overline{\partial }w_{1}}{\partial \tau }-\overline{\text{div}}_{y}(a%
\overline{D}_{y}w_{1})=g(\cdot ,\cdot ,r)\text{ in }\mathcal{W}^{\prime
},\;w_{1}\in \mathcal{W}.
\end{equation*}%
The existence of $\chi $ and $w_{1}(\cdot ,\cdot ,r)$ is ensured by a
classical result \cite{JLLions} since $\overline{\partial }/\partial \tau $
is a maximal monotone operator \cite{EfendievPankov} (see also \cite%
{EfendievPankov1} or \cite{Pankov}) and further the uniqueness of $\chi $
and $w_{1}(\cdot ,\cdot ,r)$ follows the same way of reasoning as in the
proof of Proposition \ref{p5.2}.

Now, taking $r=u_{0}(x,t,\omega )$ in (\ref{5.11}), it is easy to verify
that the function 
\begin{equation*}
(x,t,y,\tau ,\omega )\mapsto \chi (y,\tau )\cdot Du_{0}(x,t,\omega
)+w_{1}(y,\tau ,u_{0}(x,t,\omega ))
\end{equation*}%
solves Eq. (\ref{5.5}), so that, by the uniqueness of its solution, we are
led to 
\begin{equation}
u_{1}(x,t,y,\tau ,\omega )=\chi (y,\tau )\cdot Du_{0}(x,t,\omega
)+w_{1}(y,\tau ,u_{0}(x,t,\omega )).  \label{5.9}
\end{equation}%
For fixed $r\in \mathbb{R}$, and set as in \cite{AllPiat1} 
\begin{eqnarray*}
F_{1}(r) &=&\int_{\mathcal{K}}\widehat{a}\partial \widehat{w}%
_{1}(s,s_{0},r)d\beta \text{;\ }F_{2}(r)=\int_{\mathcal{K}}\widehat{\partial
_{u}g}(s,s_{0},r)\widehat{\chi }d\beta \\
F_{3}(r) &=&\int_{\mathcal{K}}\widehat{\partial _{u}g}(s,s_{0},r)\widehat{w}%
_{1}(s,s_{0},r)d\beta \text{;\ }\widetilde{M}(r)=\int_{\mathcal{K}}\widehat{M%
}(s,s_{0},r)d\beta .
\end{eqnarray*}

With this in mind, we have following

\begin{lemma}
\label{l5.3}The solution $u_{0}$ to the variational problem \emph{(\ref{5.6})%
} solves the following boundary value problem: 
\begin{equation}
\left\{ 
\begin{array}{l}
du_{0}=\left( \Div\left( bDu_{0}\right) +\Div F_{1}(u_{0})-F_{2}(u_{0})\cdot
Du_{0}-F_{3}(u_{0})\right) dt+\widetilde{M}(u_{0})d\bar{W}\text{\ \ in }Q_{T}
\\ 
u_{0}=0\text{\ \ on }\partial Q\times (0,T) \\ 
u_{0}(x,0)=u^{0}(x)\text{\ \ in }Q.%
\end{array}%
\right.  \label{5.14}
\end{equation}
\end{lemma}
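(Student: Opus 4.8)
The plan is to insert the explicit corrector representation \eqref{5.9}, $u_1(x,t,y,\tau ,\omega )=\chi (y,\tau )\cdot Du_0(x,t,\omega )+w_1(y,\tau ,u_0(x,t,\omega ))$, into the variational identity \eqref{5.6} and then integrate the fast variables out over $\mathcal{K}$, so that \eqref{5.6} collapses onto the weak formulation of \eqref{5.14}. First I would differentiate \eqref{5.9} in the fast variable to get $\partial \widehat{u}_1=(\partial \widehat{\chi })Du_0+\partial \widehat{w}_1(\cdot ,\cdot ,u_0)$, whence $Du_0+\partial \widehat{u}_1=(I+\partial \widehat{\chi })Du_0+\partial \widehat{w}_1$. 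Substituting this into the diffusion term of \eqref{5.6} and using the definition of $F_1$ gives $\int_{\mathcal{K}}\widehat{a}(Du_0+\partial \widehat{u}_1)\,d\beta =bDu_0+F_1(u_0)$, where $b$ is the effective matrix $b=\int_{\mathcal{K}}\widehat{a}(I+\partial \widehat{\chi })\,d\beta$ generated by the cell corrector $\chi $ of \eqref{5.10}. The term carrying $\widehat{G}$ in \eqref{5.6} is paired only with $D\psi _0$, which is independent of $(s,s_0)$; since $G=D_yR$ is a $y$-gradient its $y$-mean vanishes, so $\int_{\mathcal{K}}\widehat{G}(s,s_0,u_0)\,d\beta =0$ by Proposition~\ref{p2.2}, and this term drops out entirely.

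The delicate step is the zeroth-order term $\iint (\widehat{\partial _uG}\cdot (Du_0+\partial \widehat{u}_1))\psi _0$. Here I would expand $Du_0+\partial \widehat{u}_1$ as above and integrate over $\mathcal{K}$. The contribution of the constant part $Du_0$ vanishes because $\partial _uG=D_y(\partial _uR)$ is again a $y$-gradient of zero mean. For the two remaining pieces I would integrate by parts over $\mathcal{K}$, using the skew-adjointness of the operators $\partial _i$ from Proposition~\ref{p2.2} together with the identity $\Div _y(\partial _uG)=\partial _u(\Div _yG)=\partial _u g$, which follows from $g=\Delta _yR=\Div _yG$ via \eqref{2.1}. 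This transfers the $y$-derivative off $\partial _uG$ and onto $\chi $ and $w_1$, converting the two pieces into $\int_{\mathcal{K}}\widehat{\partial _u g}\,\widehat{\chi }\,d\beta \cdot Du_0=F_2(u_0)\cdot Du_0$ and $\int_{\mathcal{K}}\widehat{\partial _u g}\,\widehat{w}_1\,d\beta =F_3(u_0)$, which are exactly the functionals in the statement; the sign bookkeeping of these contributions must be matched against the drift $-F_2(u_0)\cdot Du_0-F_3(u_0)$ of \eqref{5.14}. Finally, since $\psi _0$ is independent of $(s,s_0)$, the stochastic term integrates over $\mathcal{K}$ to $\int \widetilde{M}(u_0)\psi _0\,d\bar{W}$ with $\widetilde{M}(u_0)=\int_{\mathcal{K}}\widehat{M}(s,s_0,u_0)\,d\beta $.

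Collecting these reductions, the right-hand side of \eqref{5.6} becomes $-\int (bDu_0+F_1(u_0))\cdot D\psi _0-\int (F_2(u_0)\cdot Du_0+F_3(u_0))\psi _0+\int \widetilde{M}(u_0)\psi _0\,d\bar{W}$, which, after restoring the derivatives on $D\psi _0$ in $x$ (legitimate as $\psi _0\in B(\bar{\Omega})\otimes \mathcal{C}_0^{\infty }(Q_T)$), is precisely the weak formulation of \eqref{5.14} tested against $\psi _0$. By the density of $B(\bar{\Omega})\otimes \mathcal{C}_0^{\infty }(Q_T)$ this identity characterises $u_0$, and the existence–uniqueness theory for the SPDE \eqref{5.14} with bounded measurable coefficients (as in Theorem~\ref{t4.1}) identifies $u_0$ as its unique solution; the Dirichlet condition is encoded in $u_0\in L^2(\bar{\Omega}\times (0,T);H_0^1(Q))$ and the initial datum is inherited from the construction of $u_0$ in Section~4. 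The main obstacle I anticipate is the rigorous justification of the integration-by-parts manipulations for the $\partial _uG$ terms: the correctors $\chi $ and $w_1$ lie only in $\mathcal{W}$, so one must argue by the density of $\mathcal{E}$ in $\mathcal{W}$ and invoke the closedness of the $\partial _i$ to make sense of $\int_{\mathcal{K}}\widehat{\partial _uG}\cdot \partial \widehat{\chi }\,d\beta $, and one must confirm that the It\^{o} term passes to the $\mathcal{K}$-average $\int_{\mathcal{K}}\widehat{M}\,d\beta $ with no additional correction.
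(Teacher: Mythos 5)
Your overall route is the paper's: substitute the corrector representation \eqref{5.9} into \eqref{5.6}, read off $b$ and $F_{1}$ from the diffusion term, and convert the $\partial _{u}G$ terms into $F_{2}$ and $F_{3}$ by integration by parts in the fast variable using $\Div_{y}(\partial _{u}G)=\partial _{u}g$. The one point where you genuinely diverge is the term $\iint \widehat{G}(s,s_{0},u_{0})\cdot D\psi _{0}$: you discard it because $G=D_{y}R$ has zero mean over $\mathcal{K}_{y}$ (Proposition \ref{p2.2}), whereas the paper integrates by parts in $x$ to rewrite it as $+\iint (\widehat{\partial _{u}G}\cdot Du_{0})\psi _{0}$ and cancels it against the $Du_{0}$-part of the zeroth-order term. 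The two treatments are equivalent (both pieces vanish individually, since $\partial _{u}G=D_{y}(\partial _{u}R)$ also has zero mean), and yours is slightly cleaner. Your worry about justifying the fast-variable integration by parts when $\chi ,w_{1}$ lie only in $\mathcal{W}$ is resolved exactly as in Lemma \ref{l5.1}: the derivative is moved \emph{off} $w_{1}$ (resp.\ $\chi $) and \emph{onto} the smooth factor $\partial _{u}R$ via \eqref{2.7} and Proposition \ref{p2.5} ($\partial _{u}R(\cdot ,\cdot ,r)$ is of class $C^{2}$ in $y$ by hypoellipticity), so no density argument in $\mathcal{W}$ is needed; note the derivative travels in the direction opposite to the one you describe.

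The genuine gap is the step you explicitly defer, the ``sign bookkeeping.'' Carrying it out: the relevant term of \eqref{5.6} is $-\iint \bigl(\widehat{\partial _{u}G}\cdot (\partial \widehat{\chi }\cdot Du_{0}+\partial \widehat{w}_{1})\bigr)\psi _{0}$, and skew-adjointness gives $-\int_{\mathcal{K}}\widehat{\partial _{u}G}\cdot \partial \widehat{w}_{1}\,d\beta =+\int_{\mathcal{K}}\widehat{\partial _{u}g}\,\widehat{w}_{1}\,d\beta =F_{3}(u_{0})$ and likewise $+F_{2}(u_{0})\cdot Du_{0}$; so these contributions enter the right-hand side of the weak formulation with a \emph{plus} sign, i.e.\ the drift obtained is $+F_{2}(u_{0})\cdot Du_{0}+F_{3}(u_{0})$ rather than the $-F_{2}(u_{0})\cdot Du_{0}-F_{3}(u_{0})$ of \eqref{5.14}. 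A sanity check in the autonomous case $a=I$ (so $\chi =0$, $w_{1}=-R$, and the formal two-scale expansion gives the drift $\Delta u_{0}-M_{y}(\partial _{u}g\,R)=\Delta u_{0}+F_{3}(u_{0})$) confirms the plus sign. So this is not mere bookkeeping: with the stated definitions of $F_{2},F_{3}$ through $\chi $ and $w_{1}$, either \eqref{5.14} must carry the opposite sign on these two terms or $F_{2},F_{3}$ must be redefined with a minus sign. The paper's own proof exhibits the same tension --- its displayed identities between \eqref{5.12} and \eqref{5.13} produce $+\iint \widehat{\partial _{u}g}(\widehat{\chi }\cdot Du_{0}+\widehat{w}_{1})\varphi $, yet \eqref{5.13} records this term with a minus --- so you were right to flag the step, but a complete proof has to resolve it rather than defer it.
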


\begin{proof}
We replace in Eq. (\ref{5.6}) $u_{1}$ by the expression (\ref{5.9}); we
therefore get 
\begin{equation*}
\left\{ 
\begin{array}{l}
-\int_{Q_{T}\times \bar{\Omega}}u_{0}\psi _{0}^{\prime }dxdtd\bar{\mathbb{P}}%
=-\iint_{Q_{T}\times \bar{\Omega}\times \mathcal{K}}\widehat{G}%
(s,s_{0},u_{0})\cdot D\psi _{0}dxdtd\bar{\mathbb{P}}d\beta \\ 
-\iint_{Q_{T}\times \bar{\Omega}\times \mathcal{K}}\widehat{a}%
(Du_{0}+\partial \widehat{\chi }\cdot Du_{0}+\partial \widehat{w}%
_{1}(s,s_{0},u_{0}))\cdot D\psi _{0}dxdtd\bar{\mathbb{P}}d\beta \\ 
-\iint_{Q_{T}\times \bar{\Omega}\times \mathcal{K}}\left( \widehat{\partial
_{u}G}(s,s_{0},u_{0})\cdot (Du_{0}+\partial \widehat{\chi }\cdot
Du_{0}+\partial \widehat{w}_{1}(s,s_{0},u_{0}))\right) \psi _{0}dxdtd\bar{%
\mathbb{P}}d\beta \\ 
+\iint_{Q_{T}\times \bar{\Omega}\times \mathcal{K}}\widehat{M}%
(s,s_{0},u_{0})\psi _{0}d\bar{W}dxd\bar{\mathbb{P}}d\beta \text{\ \ for all }%
\psi _{0}\in B(\bar{\Omega})\otimes \mathcal{C}_{0}^{\infty }(Q_{T})\text{.}%
\end{array}%
\right.
\end{equation*}%
In particular, for $\psi _{0}=\phi \otimes \varphi $ with $\phi \in B(\bar{%
\Omega})$ and $\varphi \in \mathcal{C}_{0}^{\infty }(Q_{T})$, we obtain 
\begin{equation}
\left\{ 
\begin{array}{l}
-\int_{Q_{T}}u_{0}\varphi ^{\prime }dxdt=-\iint_{Q_{T}\times \mathcal{K}}%
\widehat{a}([I+\partial \widehat{\chi }]\cdot Du_{0})\cdot D\varphi
dxdtd\beta \\ 
-\iint_{Q_{T}\times \mathcal{K}}\widehat{a}(\partial \widehat{w}%
_{1}(s,s_{0},u_{0})\cdot D\varphi dxdtd\beta -\iint_{Q_{T}\times \mathcal{K}}%
\widehat{G}(s,s_{0},u_{0})\cdot D\varphi dxdtd\beta \\ 
-\iint_{Q_{T}\times \mathcal{K}}\left( \widehat{\partial _{u}G}%
(s,s_{0},u_{0})\cdot (Du_{0}+\partial \widehat{\chi }\cdot Du_{0}+\partial 
\widehat{w}_{1}(s,s_{0},u_{0}))\right) \varphi dxdtd\beta \\ 
+\iint_{Q_{T}\times \mathcal{K}}\widehat{M}(s,s_{0},u_{0})\varphi d\bar{W}%
dxd\beta \text{\ \ for all }\varphi \in \mathcal{C}_{0}^{\infty }(Q_{T})%
\text{,}%
\end{array}%
\right.  \label{5.12}
\end{equation}%
where $I$ stands for the unit $N\times N$ matrix, and $\Div_{y}G(y,\tau
,u)=g(y,\tau ,u)$ as in Section 4. Let 
\begin{equation*}
b=\int_{\mathcal{K}}\widehat{a}(I+\partial \widehat{\chi })d\beta
\end{equation*}%
be the homogenized tensor. Since we have 
\begin{equation*}
-\iint_{Q_{T}\times \mathcal{K}}\widehat{G}(s,s_{0},u_{0})\cdot D\varphi
dxdtd\beta =\iint_{Q_{T}\times \mathcal{K}}(\widehat{\partial _{u}G}%
(s,s_{0},u_{0})\cdot Du_{0})\varphi dxdtd\beta ,
\end{equation*}%
\begin{eqnarray*}
&&-\iint_{Q_{T}\times \mathcal{K}}(\widehat{\partial _{u}G}%
(s,s_{0},u_{0})\cdot \partial \widehat{w}_{1}(s,s_{0},u_{0}))\varphi
dxdtd\beta \\
&=&\iint_{Q_{T}\times \mathcal{K}}\widehat{\partial _{u}g}(s,s_{0},u_{0})%
\widehat{w}_{1}(s,s_{0},u_{0})\varphi dxdtd\beta
\end{eqnarray*}%
and 
\begin{eqnarray*}
&&-\iint_{Q_{T}\times \mathcal{K}}(\widehat{\partial _{u}G}%
(s,s_{0},u_{0})\cdot (\partial \widehat{\chi }\cdot Du_{0}))\varphi
dxdtd\beta \\
&=&\iint_{Q_{T}\times \mathcal{K}}\widehat{\partial _{u}g}(s,s_{0},u_{0})(%
\widehat{\chi }\cdot Du_{0})\varphi dxdtd\beta ,
\end{eqnarray*}%
Eq. (\ref{5.12}) becomes 
\begin{equation}
\left\{ 
\begin{array}{l}
-\int_{Q_{T}}u_{0}\varphi ^{\prime }dxdt=-\int_{Q_{T}}(bDu_{0})\cdot
D\varphi dxdt \\ 
-\iint_{Q_{T}\times \mathcal{K}}\widehat{a}\partial \widehat{w}%
_{1}(s,s_{0},u_{0})\cdot D\varphi dxdtd\beta \\ 
-\iint_{Q_{T}\times \mathcal{K}}\widehat{\partial _{u}g}(s,s_{0},u_{0})(%
\widehat{\chi }\cdot Du_{0}+\widehat{w}_{1}(s,s_{0},u_{0}))\varphi dxdtd\beta
\\ 
+\iint_{Q_{T}\times \mathcal{K}}\widehat{M}(s,s_{0},u_{0})\varphi d\bar{W}%
dxd\beta \text{ for all }\varphi \in \mathcal{C}_{0}^{\infty }(Q_{T}),%
\end{array}%
\right.  \label{5.13}
\end{equation}%
which is the variational form of (\ref{5.14}).
\end{proof}

As in \cite{AllPiat1}, it can be checked straightforwardly that the
functions $F_{i}$ ($1\leq i\leq 3$) are Lipschitz continuous functions. As
in \cite{AllPiat1} again, we can show that $F_2(u)$ is uniformly bounded,
that is, there exists $C_{F_2}$ such that $|F_2(u)|\le C_{F_2}$ for any $%
u\in \mathbb{R}$. Likewise, following the same way of reasoning, it can also
be proved that the function $\widetilde{M}$ is Lipschitz continuous.

\begin{proposition}
\label{p5.3} Let $u_{0}$ and $u_{0}^{\#}$ be two solutions of \emph{(\ref%
{5.14})} on the same probabilistic system $(\bar{\Omega},\bar{\mathcal{F}},%
\bar{\mathbb{P}})$, $\bar{W}$, $\bar{\mathcal{F}^{t}}$ with the same initial
condition $u^{0}$. We have that $u_{0}=u_{0}^{\#}$ almost surely.
\end{proposition}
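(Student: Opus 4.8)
The plan is to run a standard It\^{o}-energy estimate on the difference of the two solutions and then close with Gronwall's lemma. Set $w=u_{0}-u_{0}^{\#}$. Since $u_{0}$ and $u_{0}^{\#}$ both solve \eqref{5.14} on the same stochastic basis $(\bar{\Omega},\bar{\mathcal{F}},\bar{\mathbb{P}})$, are driven by the same Wiener process $\bar{W}$, and share the initial datum $u^{0}$, the process $w$ has zero initial value and satisfies, as an equality in $H^{-1}(Q)$,
\begin{equation*}
dw=\Big(\Div(bDw)+\Div\big(F_{1}(u_{0})-F_{1}(u_{0}^{\#})\big)-\big(F_{2}(u_{0})\cdot Du_{0}-F_{2}(u_{0}^{\#})\cdot Du_{0}^{\#}\big)-\big(F_{3}(u_{0})-F_{3}(u_{0}^{\#})\big)\Big)dt+\big(\widetilde{M}(u_{0})-\widetilde{M}(u_{0}^{\#})\big)d\bar{W}.
\end{equation*}
Both solutions belong to $L^{2}(\bar{\Omega}\times(0,T);H_{0}^{1}(Q))$ and are a.s.\ continuous with values in $L^{2}(Q)$, so It\^{o}'s formula may be applied to $t\mapsto|w(t)|^{2}$ in the Gelfand triple $H_{0}^{1}(Q)\hookrightarrow L^{2}(Q)\hookrightarrow H^{-1}(Q)$.

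The first step is to control the deterministic terms. The diffusion term yields $2\langle\Div(bDw),w\rangle=-2(bDw,Dw)\le-2\Lambda_{b}\|w\|^{2}$, where $\Lambda_{b}>0$ is the coercivity constant of the homogenized tensor $b=\int_{\mathcal{K}}\widehat{a}(I+\partial\widehat{\chi})d\beta$, which inherits the ellipticity of $a$ from \textbf{A1} through the corrector equation \eqref{5.10}. For the $F_{1}$ contribution, an integration by parts together with the Lipschitz continuity of $F_{1}$ and Young's inequality gives $2(F_{1}(u_{0})-F_{1}(u_{0}^{\#}),Dw)\le\delta\|w\|^{2}+C(\delta)|w|^{2}$; the $F_{3}$ term and the It\^{o} correction $\sum_{k}|\widetilde{M}_{k}(u_{0})-\widetilde{M}_{k}(u_{0}^{\#})|^{2}$ are each bounded by $C|w|^{2}$ using the Lipschitz continuity of $F_{3}$ and of $\widetilde{M}$.

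I expect the convection term $F_{2}(u_{0})\cdot Du_{0}-F_{2}(u_{0}^{\#})\cdot Du_{0}^{\#}$ to be the main obstacle: a crude Lipschitz bound produces a factor $\int_{Q}|w|^{2}|Du_{0}^{\#}|\,dx$ that cannot be absorbed by the dissipative term in arbitrary dimension. The resolution is to exploit the uniform bound $|F_{2}(r)|\le C_{F_{2}}$ established above. Setting $H(r)=\int_{0}^{r}F_{2}(\sigma)\,d\sigma$, the map $H$ is Lipschitz with constant $C_{F_{2}}$ and $F_{2}(u)\cdot Du=\Div_{x}H(u)$, so this term is in divergence form. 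Integrating by parts, its contribution becomes $2(H(u_{0})-H(u_{0}^{\#}),Dw)\le 2C_{F_{2}}|w|\,\|w\|\le\delta\|w\|^{2}+C(\delta)|w|^{2}$, which now has exactly the same structure as the $F_{1}$ term.

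Choosing $\delta$ small enough to absorb all the $\|w\|^{2}$ contributions into $2\Lambda_{b}\|w\|^{2}$, and taking expectations so that the stochastic integral (a genuine martingale) drops out, I arrive at $\tfrac{d}{dt}\mathbb{E}|w(t)|^{2}\le C\,\mathbb{E}|w(t)|^{2}$ with $\mathbb{E}|w(0)|^{2}=0$. Gronwall's lemma then forces $\mathbb{E}|w(t)|^{2}=0$ for every $t\in[0,T]$, and the a.s.\ continuity of $w$ in $L^{2}(Q)$ upgrades this to $u_{0}=u_{0}^{\#}$ almost surely; should a pathwise supremum be required, one first takes the supremum in $t$ and estimates the martingale term by the Burkh\"{o}lder--Davis--Gundy inequality before applying Gronwall.
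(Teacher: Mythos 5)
Your proof is correct, and on the one genuinely delicate point it takes a different route from the paper. Both arguments open identically: It\^{o}'s formula for $|w|^{2}$ with $w=u_{0}-u_{0}^{\#}$ in the triple $H_{0}^{1}(Q)\hookrightarrow L^{2}(Q)\hookrightarrow H^{-1}(Q)$, Lipschitz bounds for $F_{1}$, $F_{3}$ and $\widetilde{M}$, coercivity of the homogenized tensor $b$ (which, as in the paper, you assert rather than prove, but which is standard), and the uniform bound $|F_{2}|\leq C_{F_{2}}$. The divergence occurs at the convection term. The paper estimates it crudely by $C_{F_{2}}\left(|Du_{0}|+|Du_{0}^{\#}|\right)|w|$ and then kills this non-Gronwallable contribution by multiplying through by the random weight $\sigma(t)=\exp\bigl(-\int_{0}^{t}C_{F_{2}}(|Du_{0}(s)|+|Du_{0}^{\#}(s)|)/|w(s)|\,ds\bigr)$, whose derivative cancels it exactly; this is the device of Allaire--Piatnitski transplanted to the stochastic setting. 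You instead observe that $F_{2}(u)\cdot Du=\Div_{x}H(u)$ with $H(r)=\int_{0}^{r}F_{2}(\sigma)\,d\sigma$ Lipschitz of constant $C_{F_{2}}$, integrate by parts against $w\in H_{0}^{1}(Q)$, and absorb the resulting $C_{F_{2}}|w|\,\|w\|$ into the dissipation by Young's inequality, so the term acquires the same harmless structure as the $F_{1}$ term and a plain Gronwall argument closes the proof. Your version buys something real: the paper's weight divides by $|w(s)|$, a quantity that vanishes at $t=0$ and is precisely what one is trying to show vanishes identically, so the well-definedness and differentiability of $\sigma$ are genuinely questionable, whereas your divergence-form rewriting is elementary, fully rigorous (the chain rule for the Lipschitz map $H$ composed with $u\in H^{1}$ is classical), and requires no auxiliary weight. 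The only small caveats are the ones both proofs share: the martingale term must be a true martingale before expectations are taken (guaranteed by the a priori bounds), and the ellipticity of $b$ should be recorded somewhere.
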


\begin{proof}
Let $w(t)=u_{0}(t)-u_{0}^{\#}(t)$. From It\^{o}'s formula it is easily seen
that $w$ satisfies: 
\begin{equation*}
\begin{split}
d|w(t)|^{2}& =-2(bDw(t),Dw(t))dt+2\Big[(F_{1}(u_{0}(t))-F_{1}(u_{0}^{%
\#}(t)),Dw) \\
& -(F_{2}(u_{0}(t)).Du_{0}(t)-F_{2}(u_{0}^{\#}(t)).Du_{0}^{\#}(t),w(t)) \\
& -(F_{3}(u_{0}(t))-F_{3}(u_{0}^{\#}(t)),w(t))+\frac{1}{2}|\widetilde{M}%
(u_{0}(t))-\widetilde{M}(u_{0}^{\#}(t))|^{2}\Big]dt \\
& +2(\widetilde{M}(u_{0}(t))-\widetilde{M}(u_{0}^{\#}(t)),w(t))d\bar{W}.
\end{split}%
\end{equation*}%
Let $\sigma (t)$ a differentiable function on $[0,T]$. Thanks again to It%
\^{o}'s formula we have that 
\begin{equation*}
d(\sigma (t)|w(t)|^{2})=\sigma ^{\prime }(t)|w(t)|^{2}dt+\sigma
(t)d|w(t)|^{2}.
\end{equation*}%
By using the lipschitzity of $F_{1}$, $F_{3}$, $\widetilde{M}$ and some
elementary inequalities we see that 
\begin{equation*}
\begin{split}
d(\sigma (t)|w(t)|^{2})& \leq \left( \sigma ^{\prime }(t)|w(t)|^{2}+\sigma
(t)\Big[-2(bDw(t),Dw(t))+\delta |Dw(t)|^{2}+C_{\delta }|w(t)|^{2}\Big]%
\right) dt \\
& +\left(
|F_{2}(u_{0}(t)).Du_{0}(t)|+|F_{2}(u_{0}^{\#})(t)).Du_{0}^{\#}(t)|\right)
\sigma (t)|w(t)|dt \\
& +C\sigma (t)|w(t)|^{2}dt+2\sigma (t)(\widetilde{M}(u_{0}(t))-\widetilde{M}%
(u_{0}^{\#}(t)),w(t))d\bar{W},
\end{split}%
\end{equation*}%
where $\delta >0$ is arbitrary. Integrating over $[0,t]$ and taking the
mathematical expectation yields 
\begin{equation*}
\begin{split}
\bar{\mathbb{E}}(\sigma (t)|w(t)|^{2})& \leq -2\bar{\mathbb{E}}%
\int_{0}^{t}\sigma (s)(bDw(s),Dw(s))ds+C\bar{\mathbb{E}}\int_{0}^{t}\sigma
(s)|w(s)|^{2}ds \\
& +\bar{\mathbb{E}}\int_{0}^{t}\left(
|F_{2}(u_{0}).Du_{0}|+|F_{2}(u_{0}^{\#}).Du_{0}^{\#}|\right) \sigma
(s)|w(s)|ds \\
& +\delta \bar{\mathbb{E}}\int_{0}^{t}\sigma (s)|Dw(s)|^{2}ds+\bar{\mathbb{E}%
}\int_{0}^{t}\sigma ^{\prime }(s)|w(s)|^{2}ds.
\end{split}%
\end{equation*}%
Choosing $\delta >0$ so that $\bar{\mathbb{E}}\int_{0}^{t}\sigma
(s)[(bDw,Dw)-\delta |Dw|^{2}]ds>0$, we infer from the last estimate that 
\begin{equation}
\begin{split}
\bar{\mathbb{E}}(\sigma (t)|w(t)|^{2})& \leq C\bar{\mathbb{E}}%
\int_{0}^{t}\sigma (s)|w(s)|^{2}ds+\bar{\mathbb{E}}\int_{0}^{t}\left(
|Du_{0}|+|Du_{0}^{\#}|\right) C_{F_{2}}\sigma (s)|w(s)|ds \\
& +\bar{\mathbb{E}}\int_{0}^{t}\sigma ^{\prime }(s)|w(s)|^{2}ds,
\end{split}
\label{5.15}
\end{equation}%
where we have used the fact that $F_{2}$ is uniformly bounded. By choosing 
\begin{equation*}
\sigma (t)=\exp \left( -\int_{0}^{t}\frac{\left(
|Du_{0}(s)|+|Du_{0}^{\#}(s)|\right) C_{F_{2}}}{|w(s)|}ds\right) ,
\end{equation*}%
we deduce from (\ref{5.15}) that 
\begin{equation*}
\bar{\mathbb{E}}(\sigma (t)|w(t)|^{2})\leq C\bar{\mathbb{E}}%
\int_{0}^{t}\sigma (s)|w(s)|^{2}ds,
\end{equation*}%
from which we derive by using Gronwall's lemma that $|u_{0}(t)-u_{0}^{\prime
}(t)|=0$ almost surely for any $t\in \lbrack 0,T]$. This completes the proof
of the pathwise uniqueness.
\end{proof}

\begin{remark}
\label{r6.1}\emph{The pathwise uniqueness result in Proposition \ref{p5.3}
and Yamada-Watanabe's Theorem (see, for instance, \cite{revuz}) implies the
existence of a unique strong probabilistic solution of (\ref{5.14}) on a
prescribed probabilistic system }$(\Omega ,\mathcal{F},\mathbb{P}),\mathcal{F%
}^{t},W$\emph{.}
\end{remark}

The aim of the rest of this section is to prove the following homogenization
result.

\begin{theorem}
\label{t5.1}Assume \textbf{A1}-\textbf{A5} hold. For each $\varepsilon >0$
let $u_{\varepsilon }$ be the unique solution of \emph{(\ref{1.1})} on a
given stochastic system $(\Omega ,\mathcal{F},\mathbb{P}),\mathcal{F}^{t},W$
defined as in Section \emph{4}. Then the whole sequence $u_{\varepsilon }$
converges in probability to $u_{0}$ as $\varepsilon \rightarrow 0$, in the
topology of $L^{2}(Q_{T})$ (i.e $||u_{\varepsilon }-u_{0}||_{L^{2}(Q_{T})}$
converges to zero in probability) where $u_{0}$ is the unique strong
probabilistic solution of \emph{(\ref{5.14})}.
\end{theorem}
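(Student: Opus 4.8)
The plan is to upgrade the distributional convergence obtained through the Prokhorov--Skorokhod construction of Section 4 to convergence in probability, by exploiting the pathwise uniqueness of the limit equation \eqref{5.14} via the Gy\"{o}ngy--Krylov characterization of convergence in probability: a sequence of random elements in a Polish space converges in probability if and only if, from every pair of subsequences, one can extract a further subsequence along which the \emph{joint} laws converge weakly to a probability measure carried by the diagonal. Here the Polish space is $L^{2}(Q_{T})$ and the sequence is $(u_{\varepsilon })_{\varepsilon }$.

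First I would fix two arbitrary sequences $\varepsilon _{j}\to 0$ and $\delta _{j}\to 0$ and consider the joint laws of the triples $(W,u_{\varepsilon _{j}},u_{\delta _{j}})$ on $\mathcal{C}(0,T;\mathbb{R}^{m})\times L^{2}(Q_{T})\times L^{2}(Q_{T})$. Each of the two $u$-marginals is tight by exactly the argument of Theorem \ref{t4.2} (the compact sets $Y_{\delta }$ produced by Lemma \ref{l4.3} and Proposition \ref{p4.1} work uniformly in $\varepsilon $), and $W$ is a single fixed process; hence the joint family is tight. By Prokhorov's theorem and Skorokhod's representation theorem there exist, along a subsequence and on a new complete probability space, random variables $(\widetilde{W}^{\,j},v_{j},w_{j})$ having the same joint law as $(W,u_{\varepsilon _{j}},u_{\delta _{j}})$ and converging almost surely, with $v_{j}\to v$ and $w_{j}\to w$ strongly in $L^{2}(Q_{T})$ and $\widetilde{W}^{\,j}\to \bar{W}$ in $\mathcal{C}(0,T;\mathbb{R}^{m})$.

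The crucial feature is that $v_{j}$ and $w_{j}$ are driven by the \emph{same} Brownian motion $\widetilde{W}^{\,j}$ and carry the same initial datum $u^{0}$, so that in the limit both $v$ and $w$ are driven by the single process $\bar{W}$. Repeating, separately on the two components, the passage to the limit already performed in Proposition \ref{p5.1} and Lemma \ref{l5.3}---namely the $\Sigma $-convergence of gradients from Theorem \ref{t3.3}, Lemma \ref{l5.2} together with Remark \ref{r5.1} for the nonlinear terms, and Corollary \ref{c5.1} for the microscopic time derivative---one obtains that $v$ and $w$ are two solutions of the homogenized problem \eqref{5.14} relative to $(\bar{W},u^{0})$. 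The pathwise uniqueness established in Proposition \ref{p5.3} then forces $v=w$ almost surely, so the weak limit of the joint laws of $(u_{\varepsilon _{j}},u_{\delta _{j}})$ is supported on the diagonal of $L^{2}(Q_{T})\times L^{2}(Q_{T})$. Since $\varepsilon _{j},\delta _{j}$ were arbitrary, the Gy\"{o}ngy--Krylov criterion yields that the whole sequence $(u_{\varepsilon })_{\varepsilon }$ converges in probability in $L^{2}(Q_{T})$; its limit solves \eqref{5.14} and, by the uniqueness of the strong probabilistic solution recorded in Remark \ref{r6.1}, coincides with $u_{0}$.

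The main obstacle is the joint identification step. One must verify that the Skorokhod copies $v_{j}$ and $w_{j}$ still solve, respectively, the $\varepsilon _{j}$- and $\delta _{j}$-versions of \eqref{4.10} relative to the \emph{single} Brownian motion $\widetilde{W}^{\,j}$, so that in the limit $v$ and $w$ are two solutions of \eqref{5.14} on the same stochastic basis and Proposition \ref{p5.3} genuinely applies to the pair. The delicate passages are those in the stochastic integral $\int_{0}^{t}M^{\varepsilon }(u_{\varepsilon })\,d\widetilde{W}^{\,j}$ and in the large reaction term $\tfrac{1}{\varepsilon }g^{\varepsilon }(u_{\varepsilon })$; each is treated for the two components exactly as in the single-sequence case, using the almost sure $L^{2}(Q_{T})$ convergence supplied by Skorokhod, the divergence-form decomposition \eqref{4d}, and the corrector equations \eqref{5.10}--\eqref{5.11}.
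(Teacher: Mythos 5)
Your proposal follows essentially the same route as the paper: tightness of the joint laws of the pair $(u_{\varepsilon_j},u_{\delta_j})$ together with the driving Wiener process (the paper's Lemma \ref{l5.4}), Prokhorov--Skorokhod representation, passage to the limit in both components to show each limit solves (\ref{5.14}) on the same stochastic basis, pathwise uniqueness from Proposition \ref{p5.3} to place the limit law on the diagonal, and the Gy\"{o}ngy--Krylov criterion (Lemma \ref{l3.1}) to conclude convergence in probability of the whole sequence. The identification obstacle you flag is precisely the step the paper dispatches with ``as above we can show,'' so your account is, if anything, more explicit about where the work lies.
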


The main ingredients for the proof of this theorem are the pathwise
uniqueness for (\ref{5.14}) and the following criteria for convergence in
probability whose proof can be found in \cite{GYONGY}.

\begin{lemma}
\label{l3.1} Let $X$ be a Polish space. A sequence of a X-valued random
variables $\{x_{n};n\geq 0\}$ converges in probability if and only if for
every subsequence of joint probability laws, $\{\nu _{n_{k},m_{k}};k\geq 0\}$%
, there exists a further subsequence which converges weakly to a probability
measure $\nu $ such that 
\begin{equation*}
\nu \left( \{(x,y)\in X\times X;x=y\}\right) =1.
\end{equation*}
\end{lemma}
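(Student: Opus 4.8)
The plan is to apply the Gyöngy--Krylov criterion (Lemma \ref{l3.1}) with the Polish space $X=L^{2}(Q_{T})$, so that it suffices to prove that along any pair of subsequences the joint laws of $(u_{\varepsilon_{n_k}},u_{\varepsilon_{m_k}})$ possess a further sub-subsequence whose weak limit is concentrated on the diagonal of $L^{2}(Q_{T})\times L^{2}(Q_{T})$. First I would upgrade the tightness argument of Theorem \ref{t4.2} to pairs: for fixed indices $n,m$ I introduce the $\mathcal{C}(0,T;\mathbb{R}^{m})\times L^{2}(Q_{T})\times L^{2}(Q_{T})$-valued random variable $(W,u_{\varepsilon_{n}},u_{\varepsilon_{m}})$ together with its induced measure $\Pi^{n,m}$. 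Since tightness of each marginal (which is exactly what Theorem \ref{t4.2} delivers through Lemmas \ref{l4.1}--\ref{l4.3} and Proposition \ref{p4.1}) forces tightness of the joint law on the product space, the family $\{\Pi^{n,m}\}$ is tight, and Prokhorov's theorem extracts from any subsequence a weakly convergent sub-subsequence.

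Next, along such a convergent sub-subsequence I would invoke Skorokhod's representation theorem precisely as in Section 4 to obtain, on a single complete probability space $(\bar{\Omega},\bar{\mathcal{F}},\bar{\mathbb{P}})$, random variables $(\bar{W}^{k},v_{k},v_{k}^{\#})$ sharing the joint law of $(W^{\varepsilon_{n_k}},u_{\varepsilon_{n_k}},u_{\varepsilon_{m_k}})$ and a limit $(\bar{W},v,v^{\#})$ with $\bar{W}^{k}\to\bar{W}$ in $\mathcal{C}(0,T;\mathbb{R}^{m})$ and $v_{k}\to v$, $v_{k}^{\#}\to v^{\#}$ in $L^{2}(Q_{T})$, all $\bar{\mathbb{P}}$-almost surely. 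Crucially, both $v_{k}$ and $v_{k}^{\#}$ satisfy the analogue of equation (\ref{4.10}) driven by the common Brownian motion $\bar{W}^{k}$ and with the same initial datum $u^{0}$.

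I would then carry out the homogenization passage to the limit of Section 5 on each component separately. Applying Theorem \ref{t3.3}, Proposition \ref{p5.1} and Lemma \ref{l5.3} to $v_{k}$ (resp. $v_{k}^{\#}$) shows that $v$ (resp. $v^{\#}$) solves the homogenized problem (\ref{5.14}) on the system $(\bar{\Omega},\bar{\mathcal{F}},\bar{\mathbb{P}}),\bar{W}$ with initial condition $u^{0}$. Since $v$ and $v^{\#}$ solve (\ref{5.14}) on the very same probabilistic system, with the same driving $\bar{W}$ and the same initial value, the pathwise uniqueness of Proposition \ref{p5.3} forces $v=v^{\#}$ $\bar{\mathbb{P}}$-almost surely. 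Hence the weak limit $\nu$ of the joint laws satisfies $\nu(\{(x,y):x=y\})=1$, and Lemma \ref{l3.1} yields convergence in probability of the whole sequence $(u_{\varepsilon})$ in $L^{2}(Q_{T})$. Finally, the common limit must coincide with the unique strong solution $u_{0}$ of (\ref{5.14}) on the prescribed system $(\Omega,\mathcal{F},\mathbb{P}),\mathcal{F}^{t},W$ furnished by Remark \ref{r6.1} via Yamada--Watanabe, which identifies the limit and completes the proof.

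The main obstacle will be the joint passage to the limit: I must verify that the two corrector fields $u_{1}$ and $u_{1}^{\#}$ attached to $v$ and $v^{\#}$ are each correctly identified through the cell problems (\ref{5.10})--(\ref{5.11}), so that both components genuinely satisfy the \emph{same} homogenized equation (\ref{5.14}) with identical homogenized tensor $b$ and identical nonlinearities $F_{1},F_{2},F_{3},\widetilde{M}$; only then does Proposition \ref{p5.3} apply. Care is likewise needed to confirm that the Skorokhod copies are driven by a single limiting Wiener process, which is what makes the pathwise-uniqueness comparison legitimate.
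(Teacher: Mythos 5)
There is a genuine gap: you have not proved the statement at all. The statement to be proved is Lemma \ref{l3.1} itself --- the abstract Gy\"{o}ngy--Krylov criterion, valid for an arbitrary Polish space $X$ --- whereas your proposal takes this very lemma as given (you open with ``the plan is to apply the Gy\"{o}ngy--Krylov criterion (Lemma \ref{l3.1})'') and then uses it to establish the paper's Theorem \ref{t5.1}. Relative to the target statement this is circular. Everything in your text --- the choice $X=L^{2}(Q_{T})$, the tightness of the pair laws (Lemma \ref{l5.4}), the Skorokhod representation, the identification of both limits as solutions of the homogenized problem (\ref{5.14}), pathwise uniqueness (Proposition \ref{p5.3}), and Yamada--Watanabe (Remark \ref{r6.1}) --- belongs to the \emph{application} of the lemma, i.e.\ to the paper's proof of Theorem \ref{t5.1}, which your proposal reproduces quite faithfully; none of it is a proof of the lemma, whose content is purely measure-theoretic and makes no reference to SPDEs or homogenization. (For the record, the paper does not prove Lemma \ref{l3.1} either: it quotes it from Gy\"{o}ngy and Krylov \cite{GYONGY}.)

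A genuine proof would run as follows. For sufficiency: if $(x_{n})$ were not Cauchy in probability, there would exist $\epsilon ,\delta >0$ and subsequences $(n_{k}),(m_{k})$ tending to infinity with $\mathbb{P}\left( d(x_{n_{k}},x_{m_{k}})\geq \epsilon \right) \geq \delta $ for all $k$, where $d$ is a complete metric on $X$; extracting a further subsequence along which the joint laws $\nu _{n_{k},m_{k}}$ converge weakly to some $\nu $ carried by the diagonal, the portmanteau theorem applied to the closed set $F=\{(x,y)\in X\times X:d(x,y)\geq \epsilon \}$ gives $\delta \leq \limsup_{k}\nu _{n_{k},m_{k}}(F)\leq \nu (F)=0$, a contradiction; being Cauchy in probability with values in a complete separable metric space, $(x_{n})$ then converges in probability. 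For necessity: if $x_{n}\rightarrow x$ in probability, then for any subsequences $(n_{k}),(m_{k})$ one has $(x_{n_{k}},x_{m_{k}})\rightarrow (x,x)$ in probability, hence in law, so the joint laws converge weakly to the law of $(x,x)$, which is concentrated on the diagonal. None of this argument appears in your proposal, so as an answer to the assigned statement it is not a proof.
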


Let us set $\mathfrak{S}^{L^{2}}=L^{2}(Q_{T})$, $\mathfrak{S}^{W}=\mathcal{C}%
(0,T:\mathbb{R}^{m})$, $\mathfrak{S}^{L^{2},L^{2}}=L^{2}(Q_{T})\times
L^{2}(Q_{T})$, and finally $\mathfrak{S}=L^{2}(Q_{T})\times
L^{2}(Q_{T})\times \mathfrak{S}^{W}$. For any $S\in \mathcal{B}(\mathfrak{S}%
^{L^{2}})$ we set $\Pi ^{\varepsilon }(S)=\mathbb{P}(u_{\varepsilon }\in S)$
and $\Pi ^{W}=\mathbb{P}(W\in S)$ for any $S\in \mathcal{B}(\mathfrak{S}%
^{W}) $. Next we define the joint probability laws : 
\begin{align*}
\Pi ^{\varepsilon ,\varepsilon ^{\prime }}& =\Pi ^{\varepsilon }\times \Pi
^{\varepsilon ^{\prime }} \\
\nu ^{\varepsilon ,\varepsilon ^{\prime }}& =\Pi ^{\varepsilon }\times \Pi
^{\varepsilon ^{\prime }}\times \Pi ^{W}.
\end{align*}%
The following tightness property holds.

\begin{lemma}
\label{l5.4}The collection $\{\nu ^{\varepsilon ,\varepsilon ^{\prime
}};\varepsilon ,\varepsilon ^{\prime }\in E\}$ (and hence any subsequence $%
\{\nu ^{\varepsilon _{j},\varepsilon _{j}^{\prime }}:\varepsilon
_{j},\varepsilon _{j}^{\prime }\in E^{\prime }\}$) is tight on $\mathfrak{S}$%
.
\end{lemma}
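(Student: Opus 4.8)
The plan is to exploit the product structure of $\nu^{\varepsilon,\varepsilon'}$ together with the tightness already established in Theorem \ref{t4.2}. Recall that a family of Borel probability measures on a Polish space is tight precisely when, for each $\delta>0$, there is a single compact set carrying mass at least $1-\delta$ uniformly over the family, and that a finite product of tight families is tight. Since $\mathfrak{S}=L^{2}(Q_{T})\times L^{2}(Q_{T})\times \mathfrak{S}^{W}$ is a product of Polish spaces and $\nu^{\varepsilon,\varepsilon'}=\Pi^{\varepsilon}\times\Pi^{\varepsilon'}\times\Pi^{W}$ is the corresponding product measure, it suffices to verify tightness of each of the three coordinate families $\{\Pi^{\varepsilon}\}_{\varepsilon\in E}$, $\{\Pi^{\varepsilon'}\}_{\varepsilon'\in E}$ on $L^{2}(Q_{T})$ and of $\{\Pi^{W}\}$ on $\mathfrak{S}^{W}$.

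First I would handle the two $L^{2}(Q_{T})$-factors. These are identical families, namely the laws of $u_{\varepsilon}$ on $L^{2}(Q_{T})$, and their tightness is exactly the content of the second half of the proof of Theorem \ref{t4.2}: given $\eta>0$, the ball $Y_{\eta}$ of radius $M_{\eta}=2C\eta^{-1}$ in the space $D_{\nu_{n},\mu_{n}}$ is, by Lemma \ref{l4.3}, a compact subset of $L^{2}(Q_{T})$, and the Markov-type estimate leading to \eqref{47**} gives $\Pi^{\varepsilon}(Y_{\eta})=\mathbb{P}(u_{\varepsilon}\in Y_{\eta})\geq 1-\eta$ for every $\varepsilon\in E$, with $C$ independent of $\varepsilon$.

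Next, the Wiener factor $\{\Pi^{W}\}$ consists of the single law of the fixed process $W$ on the Polish space $\mathcal{C}(0,T;\mathbb{R}^{m})$, hence is automatically tight (any single Borel probability measure on a Polish space is tight by Ulam's theorem); concretely one may also invoke the compact sets $\Sigma_{\eta}$ built in the first half of the proof of Theorem \ref{t4.2}, for which $\Pi^{W}(\Sigma_{\eta})\geq 1-\eta$. Finally I would assemble these: given $\delta>0$, choose $\eta>0$ with $(1-\eta)^{3}\geq 1-\delta$ and set $K=Y_{\eta}\times Y_{\eta}\times\Sigma_{\eta}$, which is compact in $\mathfrak{S}$. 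By the product structure,
\[
\nu^{\varepsilon,\varepsilon'}(K)=\Pi^{\varepsilon}(Y_{\eta})\,\Pi^{\varepsilon'}(Y_{\eta})\,\Pi^{W}(\Sigma_{\eta})\geq(1-\eta)^{3}\geq 1-\delta
\]
uniformly in $\varepsilon,\varepsilon'\in E$, which is the desired tightness; the same compact set works along any subsequence $\{\nu^{\varepsilon_{j},\varepsilon_{j}'}\}$, settling the parenthetical claim as well.

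There is no genuine analytic obstacle here beyond bookkeeping: the a priori estimates of Lemma \ref{l4.1}, the time-translation estimate of Lemma \ref{l4.2}, and the compact embedding of Lemma \ref{l4.3} have all been carried out, so the only point demanding care is the uniformity in the pair $(\varepsilon,\varepsilon')$, which is immediate from the product form of the measures since the coordinates are independent under $\nu^{\varepsilon,\varepsilon'}$.
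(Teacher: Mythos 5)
Your proof is correct and follows essentially the same route as the paper: both take the compact sets $Y_{\cdot}$ and $\Sigma_{\cdot}$ from the proof of Theorem \ref{t4.2}, form the product compact $K=Y\times Y\times \Sigma$ in $\mathfrak{S}$, and use the product structure of $\nu^{\varepsilon,\varepsilon'}$ to multiply the uniform lower bounds on the marginals (the paper uses the split $(1-\delta/4)^{2}(1-\delta/2)\geq 1-\delta$ where you use $(1-\eta)^{3}\geq 1-\delta$, an immaterial difference). Your added observation that $\{\Pi^{W}\}$ is a single measure and hence tight by Ulam's theorem is a harmless extra remark.
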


\begin{proof}
The proof is very similar to Theorem \ref{t4.2}. For any $\delta >0$ we
choose the sets $\Sigma _{\delta },Y_{\delta }$ exactly as in the proof of
Theorem \ref{t4.2} with appropriate modification on the constants $M_{\delta
},L_{\delta }$ so that $\Pi ^{\varepsilon }(Y_{\delta })\geq 1-\frac{\delta 
}{4}$ and $\Pi ^{W}(\Sigma _{\delta })\geq 1-\frac{\delta }{2}$ for every $%
\varepsilon \in E$. Now let us take $K_{\delta }=Y_{\delta }\times Y_{\delta
}\times \Sigma _{\delta }$ which is a compact in $\mathfrak{S}$; it is not
difficult to see that $\{\nu ^{\varepsilon ,\varepsilon ^{\prime
}}(K_{\delta })\geq (1-\frac{\delta }{4})^{2}(1-\frac{\delta }{2})\geq
1-\delta $ for all $\varepsilon ,\varepsilon ^{\prime }$. This completes the
proof of the lemma.
\end{proof}

\begin{proof}[Proof of Theorem \protect\ref{t5.1}]
Lemma \ref{l5.4} implies that there exists a subsequence from $\{\nu
^{\varepsilon _{j},\varepsilon _{j}^{\prime }}\}$ still denoted by $\{\nu
^{\varepsilon _{j},\varepsilon _{j}^{\prime }}\}$ which converges to a
probability measure $\nu $. By Skorokhod's theorem there exists a
probability space $(\bar{\Omega},\bar{\mathcal{F}},\bar{\mathbb{P}})$ on
which a sequence $(u_{\varepsilon _{j}},u_{\varepsilon _{j}^{\prime
}},W^{j}) $ is defined and converges almost surely in $\mathfrak{S}%
^{L^{2},L^{2}}\times \mathfrak{S}^{W}$ to a couple of random variables $%
(u_{0},v_{0},\bar{W})$. Furthermore, we have 
\begin{align*}
Law(u_{\varepsilon _{j}},u_{\varepsilon _{j}^{\prime }},W^{j})& =\nu
^{\varepsilon _{j},\varepsilon _{j}^{\prime }}, \\
Law(u_{0},v_{0},\bar{W})& =\nu .
\end{align*}%
Now let $Z_{j}^{u_{\varepsilon }}=(u_{\varepsilon _{j}},W^{j})$, $%
Z_{j}^{u_{\varepsilon ^{\prime }}}=(u_{\varepsilon _{j}^{\prime }},W^{j})$, $%
Z^{u_{0}}=(u_{0},\bar{W})$ and $Z^{v_{0}}=(v_{0},\bar{W})$. We can infer
from the above argument that $\left( \Pi ^{\varepsilon _{j},\varepsilon
_{j}^{\prime }}\right) $ converges to a measure $\Pi $ such that 
\begin{equation*}
\Pi (\cdot )=\bar{\mathbb{P}}((u_{0},v_{0})\in \cdot ).
\end{equation*}%
As above we can show that $Z_{j}^{u_{\varepsilon }}$ and $%
Z_{j}^{u_{\varepsilon ^{\prime }}}$ satisfy (\ref{4.10}) and that $Z^{u}$
and $Z^{v}$ satisfy (\ref{5.14}) on the same stochastic system $(\bar{\Omega}%
,\bar{\mathcal{F}},\bar{\mathbb{P}}),\bar{\mathcal{F}}^{t},\bar{W}$, where $%
\bar{\mathcal{F}^{t}}$ is the filtration generated by the couple $%
(u_{0},v_{0},\bar{W})$. Since we have the uniqueness result above, then we
see that $u^{0}=v^{0}$ almost surely and $u_{0}=v_{0}$ in $L^{2}(Q_{T})$.
Therefore 
\begin{equation*}
\Pi \left( \{(x,y)\in \mathfrak{S}^{L^{2},L^{2}};x=y\}\right) =\bar{\mathbb{P%
}}\left( u_{0}=v_{0}\text{ in }L^{2}(Q_{T})\right) =1.
\end{equation*}%
This fact together with Lemma \ref{l3.1} imply that the original sequence $%
\left( u_{\varepsilon }\right) $ defined on the original probability space $%
(\Omega ,\mathcal{F},\mathbb{P}),\mathcal{F}^{t},W$ converges in probability
to an element $u_{0}$ in the topology of $\mathfrak{S}^{L^{2}}$. By a
passage to the limit's argument as in the previous subsection it is not
difficult to show that $u_{0}$ is the unique solution of (\ref{5.14}) (on
the original probability system $(\Omega ,\mathcal{F},\mathbb{P}),\mathcal{F}%
^{t},W$). This ends the proof of Theorem \ref{t5.1}.
\end{proof}

\section{Some applications}

In this subsection we provide some applications of the results obtained in
the previous sections to some special cases.

\subsection{Example 1}

The first application is related to the periodicity hypothesis stated as
follows:

\begin{itemize}
\item[\textbf{A6}] $g(\cdot ,\cdot ,u)\in \mathcal{C}_{\text{per}}(Y\times
Z) $ for all $u\in \mathbb{R}$ with $\int_{Y}g(y,\tau ,u)dy=0$ for all $\tau
,u\in \mathbb{R}$; $a_{ij},M_{i}(\cdot ,\cdot ,u)\in L_{\text{per}}^{\infty
}(Y\times Z)$ for all $1\leq i,j\leq N$; $M_{i}(\cdot ,\cdot ,u)\in L_{\text{%
per}}^{\infty }(Y\times Z)$ for each $1\leq i\leq m$ and for all $u\in 
\mathbb{R}$,

\noindent where $Y=(0,1)^{N}$ and $Z=(0,1)$ and, $\mathcal{C}_{\text{per}%
}(Y\times Z)$ and $L_{\text{per}}^{\infty }(Y\times Z)$ denote the usual
spaces of $Y\times Z$-periodic functions.
\end{itemize}

As the periodic functions are part of almost periodic functions, all the
results of the previous sections apply to this case. We have the following
result.

\begin{theorem}
\label{t5.2}Assume hypotheses \textbf{A1}-\textbf{A5} are satisfied with the
almost periodicity therein being replaced by the periodicity hypothesis 
\textbf{A6}. For each $\varepsilon >0$ let $u_{\varepsilon }$ be uniquely
determined by \emph{(\ref{1.1})}. Then as $\varepsilon \rightarrow 0$, 
\begin{equation*}
u_{\varepsilon }\rightarrow u_{0}\text{\ in }L^{2}(Q\times (0,T))\text{
almost surely}
\end{equation*}%
and 
\begin{equation*}
\frac{\partial u_{\varepsilon }}{\partial x_{j}}\rightarrow \frac{\partial
u_{0}}{\partial x_{j}}+\frac{\partial u_{1}}{\partial y_{j}}\text{\ in }%
L^{2}(Q\times (0,T)\times \bar{\Omega})\text{-weak }\Sigma \text{ \ }(1\leq
j\leq N)
\end{equation*}%
where $(u_{0},u_{1})\in L^{2}(\bar{\Omega}\times \left( 0,T\right)
;H_{0}^{1}(Q))\times L^{2}(Q_{T}\times \bar{\Omega};\mathcal{W})$ is the
unique solution to the variational problem 
\begin{equation*}
\left\{ 
\begin{array}{l}
-\int_{Q_{T}\times \bar{\Omega}}u_{0}\psi _{0}^{\prime }dxdtd\bar{\mathbb{P}}%
+\int_{Q_{T}\times \bar{\Omega}}\left[ \frac{\partial u_{1}}{\partial \tau }%
,\psi _{1}\right] dxdtd\bar{\mathbb{P}} \\ 
=-\iint_{Q_{T}\times \bar{\Omega}\times Y\times Z}a(Du_{0}+D_{y}u_{1})\cdot
(D\psi _{0}+D_{y}\psi _{1})dxdtd\bar{\mathbb{P}}dyd\tau \\ 
+\iint_{Q_{T}\times \bar{\Omega}\times Y\times Z}g(y,\tau ,u_{0})\psi
_{1}dxdtd\bar{\mathbb{P}}dyd\tau \\ 
-\iint_{Q_{T}\times \bar{\Omega}\times Y\times Z}G(y,\tau ,u_{0})\cdot D\psi
_{0}dxdtd\bar{\mathbb{P}}dyd\tau \\ 
-\iint_{Q_{T}\times \bar{\Omega}\times Y\times Z}\left( \partial
_{u}G(y,\tau ,u_{0})\cdot (Du_{0}+D_{y}u_{1})\right) \psi _{0}dxdtd\bar{%
\mathbb{P}}dyd\tau \\ 
+\iint_{Q_{T}\times \bar{\Omega}\times Y\times Z}M(y,\tau ,u_{0})\psi _{0}d%
\bar{W}dxd\bar{\mathbb{P}}dyd\tau \text{\ \ for all }(\psi _{0},\psi
_{1})\in \mathcal{F}_{0}^{\infty }%
\end{array}%
\right.
\end{equation*}%
where $\mathcal{W}=\{v\in L_{\text{\emph{per}}}^{2}(Z;W_{\#}^{1,2}(Y)):%
\partial v/\partial \tau \in L_{\text{\emph{per}}}^{2}(Z;[W_{\#}^{1,2}(Y)]^{%
\prime })\}$ with $W_{\#}^{1,2}(Y)=\{u\in W_{\text{\emph{per}}%
}^{1,2}(Y):\int_{Y}u(y)dy=0\}$, and $\mathcal{F}_{0}^{\infty }=[B(\bar{\Omega%
})\otimes \mathcal{C}_{0}^{\infty }(Q_{T})]\times \lbrack B(\bar{\Omega}%
)\otimes \mathcal{C}_{0}^{\infty }(Q_{T})\otimes \mathcal{E}$ with $\mathcal{%
E}=\mathcal{C}_{\text{\emph{per}}}^{\infty }(Z)\otimes \mathcal{C}%
_{\#}^{\infty }(Y)$ and $\mathcal{C}_{\#}^{\infty }(Y)=\{u\in \mathcal{C}_{%
\text{\emph{per}}}^{\infty }(Y):\int_{Y}u(y)dy=0\}$.
\end{theorem}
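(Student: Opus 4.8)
The plan is to exploit the fact that the continuous $Y\times Z$-periodic functions form a particular algebra with mean value sitting inside the almost periodic algebra $A=AP(\mathbb{R}_{y,\tau}^{N+1})$, so that the entire homogenization machinery of Sections~2--5 applies without change; the only genuine work is to translate the abstract Gelfand-theoretic conclusion (Proposition~\ref{p5.1} together with the Skorokhod convergences of Section~4) into the concrete periodic language. First I would check that hypothesis \textbf{A6} implies the periodic instances of \textbf{A1}--\textbf{A5}: ellipticity, the Lipschitz bounds, and $g(\cdot,\cdot,0)=0$ are structural and are inherited verbatim, while the centering condition $M_y(g(\cdot,\tau,u))=0$ of \textbf{A4} is exactly $\int_Y g(y,\tau,u)\,dy=0$, and the membership $a_{ij},M_i(\cdot,\cdot,u)\in L^\infty_{\mathrm{per}}(Y\times Z)$ supplies the required $B_{AP}^2\cap L^\infty$ regularity. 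In particular the representation (\ref{2.1}) reduces to the classical periodic Poisson problem $\Delta_y R(\cdot,\cdot,u)=g(\cdot,\cdot,u)$ with $\int_Y R\,dy=0$, so that $G=D_yR$ is a well-defined periodic field.

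Next I would make the spectral dictionary explicit. The spectrum $\mathcal{K}_y$ of $A_y=\mathcal{C}_{\mathrm{per}}(Y)$ is the $N$-torus, which we identify with $Y=(0,1)^N$, its Haar measure $\beta_y$ being Lebesgue measure $dy$; likewise $\mathcal{K}_\tau\cong Z=(0,1)$ with $\beta_\tau=d\tau$, whence $\mathcal{K}=Y\times Z$ and $\beta=dy\,d\tau$. Under this identification the Gelfand transformation $\mathcal{G}$ becomes the canonical identification of a periodic function on $\mathbb{R}^N$ (resp. $\mathbb{R}$) with a function on the torus, so that all hats disappear: $\widehat{a}=a$, $\widehat{g}=g$, $\widehat{G}=G$, $\widehat{\partial_uG}=\partial_uG$, $\widehat{M}=M$. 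Consequently $\mathcal{B}_{AP}^p(\mathbb{R}_y^N)=L^p_{\mathrm{per}}(Y)$, the completed corrector space $\mathcal{B}_{\#AP}^{1,p}(\mathbb{R}_y^N)$ equals $W_\#^{1,p}(Y)=\{u\in W^{1,p}_{\mathrm{per}}(Y):\int_Yu\,dy=0\}$, the abstract gradient $\overline{D}_y$ coincides with the classical periodic gradient $D_y$, and the skew-adjoint operator $\overline{\partial}/\partial\tau$ with the classical weak $\tau$-derivative. Hence $\mathcal{V}=L^2_{\mathrm{per}}(Z;W_\#^{1,2}(Y))$, the space $\mathcal{W}$ is exactly the one displayed in the statement, and $\mathcal{E}=\mathcal{C}^\infty_{\mathrm{per}}(Z)\otimes\mathcal{C}_\#^\infty(Y)$.

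With the dictionary in hand the conclusion is read off directly. Tightness (Theorem~\ref{t4.2}) and Skorokhod's theorem furnish $(\bar\Omega,\bar{\mathcal{F}},\bar{\mathbb{P}})$, $\bar W$, and the subsequence with (\ref{4.9}), i.e. $u_{\varepsilon_j}\to u_0$ in $L^2(Q_T)$ $\bar{\mathbb{P}}$-almost surely, which is the first asserted convergence; Theorem~\ref{t3.3} then produces $u_1\in L^2(Q_T\times\bar\Omega;\mathcal{W})$ with the weak $\Sigma$-convergence (\ref{5.3}), which, after replacing $\overline{\partial}/\partial y_i$ by $\partial/\partial y_i$, is the second asserted convergence. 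Proposition~\ref{p5.1} gives that $(u_0,u_1)$ solves the global homogenized problem (\ref{5.1}); substituting the identifications above (each integral $\iint_{\cdots\times\mathcal{K}}$ becoming $\iint_{\cdots\times Y\times Z}\,dy\,d\tau$ and all hats dropped) turns (\ref{5.1}) into precisely the periodic variational problem of the statement. Uniqueness of the pair follows from results already proved: at fixed $(x,t,\omega)$ the $u_1$-component is unique by Proposition~\ref{p5.2}, while $u_0$ solves the reduced equation (\ref{5.14}) (Lemma~\ref{l5.3}), for which pathwise uniqueness holds by Proposition~\ref{p5.3}.

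I expect the single non-mechanical point to be the spectral identification of the second paragraph: that the Gelfand spectrum of $\mathcal{C}_{\mathrm{per}}(Y)$ is the torus with normalized Lebesgue measure as Haar measure, and, more delicately, that the abstractly defined generators $D_{i,p}$ and $\overline{\partial}/\partial\tau$ agree with the classical weak periodic derivatives, so that $\mathcal{B}_{\#AP}^{1,p}$ is genuinely $W_\#^{1,p}(Y)$ and the duality bracket $[\cdot,\cdot]$ coincides with the periodic one. This is standard in the theory of algebras with mean value but must be verified, since it is exactly what guarantees that the abstract correctors and the classical periodic cell functions coincide. Once this is settled, every remaining step is a direct transcription of the already-established almost periodic results.
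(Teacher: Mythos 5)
Your proposal is correct and follows essentially the same route as the paper: the paper's own proof is exactly the observation that in the periodic setting the mean value is $\int_Y u\,dy$, the Besicovitch space is $L^p_{\mathrm{per}}(Y)$, and the abstract derivatives $\overline{\partial}/\partial y_i$, $\overline{\partial}/\partial\tau$ reduce to the classical distributional ones, after which the general results of Sections 3--5 are transcribed verbatim. Your write-up merely spells out this spectral dictionary (torus as Gelfand spectrum, Haar measure as Lebesgue measure, $\mathcal{B}^{1,p}_{\#AP}=W^{1,p}_{\#}(Y)$) in more detail than the paper does, which is a faithful elaboration rather than a different argument.
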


\begin{proof}
Theorem \ref{t5.2} is a consequence of the following facts: (1) in the
periodic setting, the mean value of a function $u\in L_{\text{per}%
}^{p}(Y)=\{u\in L_{\text{loc}}^{p}(\mathbb{R}_{y}^{N}):u$ is $Y$-periodic$\}$
is merely expressed as $M(u)=\int_{Y}u(y)dy$ (the same definition for the
other spaces); (2) the Besicovitch space corresponding to the periodic
functions is exactly the space $L_{\text{per}}^{p}(Y)$; (3) the derivative $%
\overline{\partial }/\partial y_{i}$ (resp. $\overline{\partial }/\partial
\tau )$ is therefore exactly the usual one in the distribution sense $%
\partial /\partial y_{i}$ (resp. $\partial /\partial \tau )$.
\end{proof}

\begin{remark}
\label{r5.2}\emph{The above result extends to the case of stochastic partial
differential equations the result obtained by Allaire and Piatnitski \cite%
{AllPiat1} in the periodic deterministic setting.}
\end{remark}

\subsection{Example 2}

Our purpose in the present example is to study the homogenization problem
for (\ref{1.1}) under the following assumptions, where the indices $1\leq
i,j\leq N$ and $1\leq l\leq m$ are arbitrarily fixed:

\begin{itemize}
\item[(HYP)$_{1}$] $a_{ij}(\cdot ,\tau )\in B_{AP}^{2}(\mathbb{R}_{y}^{N})$
a.e. in $\tau \in \mathbb{R}$.

\item[(HYP)$_{2}$] The function $\tau \mapsto a_{ij}(\cdot ,\tau )$ from $%
\mathbb{R}$ to $B_{AP}^{2}(\mathbb{R}_{y}^{N})$ is piecewise constant in the
sense that there exists a mapping $q_{ij}:\mathbb{Z}\rightarrow B_{AP}^{2}(%
\mathbb{R}_{y}^{N})$ such that 
\begin{equation*}
a_{ij}(\cdot ,\tau )=q_{ij}(k)\text{\ a.e. in }k\leq \tau <k+1\text{ (}k\in 
\mathbb{Z}\text{).}
\end{equation*}%
We assume further that $q_{ij}\in \mathcal{C}_{\text{per}}(\mathbb{Z}%
;B_{AP}^{2}(\mathbb{R}_{y}^{N}))$.

\item[(HYP)$_{3}$] The functions $g(\cdot ,\cdot ,u)\in AP(\mathbb{R}%
_{y,\tau }^{N+1})$ with $M_{y}(g(\cdot ,\cdot ,u))=0$, and $M_{l}(\cdot
,\cdot ,u)\in \mathcal{C}_{\text{per}}(Y\times Z)$ for all $u\in \mathbb{R}$.
\end{itemize}

Then arguing as in \cite{NgWouEJDE} we are led to the homogenization of (\ref%
{1.1}) with in \textbf{A3}-\textbf{A5} the almost periodicity replaced by
(HYP)$_{1}$-(HYP)$_{3}$ above. Indeed the above assumptions lead to the
almost periodicity of the involved functions with respect to $y$ and $\tau $.

\subsection{Example 3}

Our concern here is the study of the homogenization of (\ref{1.1}) under the
following assumptions, the indices $1\leq i,j\leq N$ and $1\leq l\leq m$
being arbitrarily fixed:

\begin{itemize}
\item[(1)] The function $\tau \mapsto a_{ij}(\cdot ,\tau )$ maps
continuously $\mathbb{R}$ into $L_{\text{loc}}^{2}(\mathbb{R}_{y}^{N})$ and
is $Z$-periodic ($Z=(0,1)$).

\item[(2)] For each fixed $\tau \in \mathbb{R}$, the function $a_{ij}(\cdot
,\tau )$ is $Y_{\tau }$-periodic, where $Y_{\tau }=(0,c_{\tau })^{N}$ with $%
c_{\tau }>0$.

\item[(3)] $g(\cdot ,\cdot ,u)\in \mathcal{C}_{\text{per}}(Y\times Z)$ with $%
\int_{Y}g(y,\tau ,u)dy=0$ for all $\tau ,u\in \mathbb{R}$, and $M_{l}(\cdot
,\cdot ,u)\in B_{AP}^{2}(\mathbb{R}_{y,\tau }^{N+1})$ for all $u\in \mathbb{R%
}$.
\end{itemize}

Hypothesis (1) and (2) imply that $a_{ij}\in \mathcal{C}_{\text{per}%
}(Z;B_{AP}^{2}(\mathbb{R}_{y}^{N}))\subset B_{AP}^{2}(\mathbb{R}_{y,\tau
}^{N+1})$, such that the homogenization of (\ref{1.1}) under the above
hypotheses is solvable.

\end{document}